\numberwithin{equation}{section}
\def\vgap{\vspace*{.1in}}
\newcommand{\ts}{\textsuperscript}
\newcommand{\E}{\mathbb{E}}
\newcommand{\R}{\mathbb{R}}
\newcommand{\La}{\mathcal{L}}
\def \grad {\nabla}
\DeclareMathOperator*{\argmax}{arg\,max}
\DeclareMathOperator*{\argmin}{arg\,min}
\newcommand{\inner}[2]{\langle {#1,#2} \rangle}
\NewDocumentCommand\norm{ m O{}}{\left\lVert #1 \right\rVert_{#2}}
\NewDocumentCommand\normsq{ m O{}}{\left\lVert #1 \right\rVert^2_{#2}}
\newcommand{\textblue}[1]{{ #1}}
\DeclarePairedDelimiter{\ceil}{\lceil}{\rceil}
\newcommand{\tsum}{\textstyle \sum}
\newcommand{\ep}{\epsilon}
\newcommand{\lam}{\lambda}
\newcommand{\algmargin}{\the\ALG@thistlm}
\newlength{\whilewidth}
\algnewcommand{\parState}[1]{\State%
  \parbox[t]{\dimexpr\linewidth-\algmargin}{\strut #1\strut}}
\def \bigO {\mathcal{O}}
\setlist{itemsep=0pt}
\NewDocumentCommand\utilde{m}{\underaccent{\tilde}{#1}}
\newcommand{\addt}[1]{#1^t}
\newcommand{\addtt}[1]{#1^{t-1}}
\newcommand{\addttt}[1]{#1^{t-2}}
\newcommand{\addp}[1]{#1^{t+1}}
\newcommand{\addN}[1]{#1^N}
\newcommand{\addO}[1]{#1^0}
\newcommand{\sadt}[1]{#1_t}
\NewDocumentCommand\bci{O{i}}{#1}
\newcommand{\ave}{{\text{av}}}
\NewDocumentCommand\SubAddi{m m}{#1_{\bci[#2]}}
\NewDocumentCommand\SubAddI{m m}{#1_{#2}}
\NewDocumentCommand\Addit{m O{\bci}}{\addt{\SubAddI{#1}{#2}}}
\NewDocumentCommand\Additt{m O{\bci}}{\addtt{\SubAddI{#1}{#2}}}
\NewDocumentCommand\Addittt{m O{\bci}}{\addttt{\SubAddI{#1}{#2}}}
\NewDocumentCommand\AddiN{m O{\bci}}{\addN{\SubAddI{#1}{#2}}}
\NewDocumentCommand\Addip{m O{\bci}}{\addp{\SubAddI{#1}{#2}}}
\NewDocumentCommand\AddiO{m O{\bci}}{\addO{\SubAddI{#1}{#2}}}
\NewDocumentCommand\Addav{m O{\ave}}{\SubAddI{#1}{#2}}
\NewDocumentCommand\Addavt{m O{\ave}}{\addt{\SubAddI{#1}{#2}}}
\NewDocumentCommand\Addavtt{m O{\ave}}{\addtt{\SubAddI{#1}{#2}}}
\NewDocumentCommand\Addavttt{m O{\ave}}{\addttt{\SubAddI{#1}{#2}}}
\NewDocumentCommand\AddavN{m O{\ave}}{\addN{\SubAddI{#1}{#2}}}
\NewDocumentCommand\Addavp{m O{\ave}}{\addp{\SubAddI{#1}{#2}}}
\NewDocumentCommand\AddavO{m O{\ave}}{\addO{\SubAddI{#1}{#2}}}
\def \targmax {{\textstyle\argmax}}
\def \targmin {{\textstyle\argmin}}
\def \tmax {{\textstyle\max}}
\def \tmin {{\textstyle\min}}
\def \sumi {\tsum_{i=1}^m}
\NewDocumentCommand\nSt{O{t}}{{{S_{#1}}}}
\def \St {{S_t}}
\def \sums {\tsum_{s=1}^{S_t}}
\def \sumt {\tsum_{t=1}^N}
\NewDocumentCommand\sumT{O{N}}{\tsum_{t=1}^{#1}}
\NewDocumentCommand\qs{O{s}}{q_{#1}}
\NewDocumentCommand\etat{O{t}}{\eta_{#1}}
\def \etatt {\etat[t-1]}
\def \etatt {\eta_{t-1}}
\def \gam {\gamma}
\def \gampi {\gam_\pi}
\def \gamA {\gam_A}
\NewDocumentCommand\taut{O{t}}{\tau_{#1}}
\def \tautt {\taut[t-1]}
\def \thetat {\sadt{\theta}}
\NewDocumentCommand\wt{O{t}}{\omega_{#1}}
\NewDocumentCommand\epxt{O{t}O{x}}{\ep^{#1}_{#2}}
\NewDocumentCommand\epuxt{O{t}O{\underline{x}}}{\ep^{#1}_{#2}}
\NewDocumentCommand\eppt{O{t}}{\epxt[#1][p]}
\def \wtt {\wt[t-1]}
\def \Rx {R_{0}}
\def \Nep {N_\ep}
\def \Lf {L_f}
\NewDocumentCommand\Lfi{O{}}{L_{f_{#1}}}
\def \Mtil {\tilde{M}}
\def \MAPi {\Mtil_{A\Pi}}
\def \bMAPiU {\MAPi}
\NewDocumentCommand\MtU{O{t}}{M_{{#1}}}
\NewDocumentCommand\bMtU{O{t}}{\bar{M}_{{#1}}}
\def \MttU {M_{{t-1}}}
\NewDocumentCommand\MPiu{O{U^*}}{M_{\Pi, #1}}
\NewDocumentCommand\MPi{O{i}}{M_{\Pi_{#1}}}
\NewDocumentCommand\DX{O{}}{D_{X^0}}
\def \DP {D_P}
\NewDocumentCommand\Rt{O{t}}{R_{{#1}}}
\NewDocumentCommand\gi{O{i}}{\SubAddi{g}{#1}}
\def \dom {\rm dom}
\def \Lfbar {\bar{L}_f}
\NewDocumentCommand\eptl{O{}}{\tilde{\ep}_{#1}}
\NewDocumentCommand\MXtlt{O{t}}{M_{\tilde{X}}^{#1}}
\NewDocumentCommand\MPitlt{O{t}}{M_{\tilde{\Pi}}^{#1}}
\NewDocumentCommand\fistar{O{i}}{f^*_{#1}}
\NewDocumentCommand\fitilstar{O{i}}{\tilde{f}^*_{#1}}
\NewDocumentCommand\Dfistar{O{i}}{W_{\fistar[#1]}}
\NewDocumentCommand\fistarp{O{i}}{(f^*_{#1})'}
\NewDocumentCommand\ftlit{O{t}O{i}}{\tilde{f}_{#2}^{#1}}
\NewDocumentCommand\futit{O{t}O{i}}{\utilde{f}_{#2}^{#1}}
\def \pmbf {\pmb{f}}
\def \Vi {V_i}
\NewDocumentCommand\myfi{O{\bci}}{f_{#1}}
\NewDocumentCommand\vt{O{t}}{v^{#1}}
\NewDocumentCommand\vit{O{t}O{i}}{v_{#2}^{#1}}
\def \vitt {\vit[t-1]}
\NewDocumentCommand\Ai{O{i}} {A_{#1}}
\NewDocumentCommand\Aitr{O{i}}{A_{#1}^\top}
\def \MA {M_A}
\def \DPi {D_\Pi}
\def \rhostar {\rho^*}
\def \zt {\addt{z}}
\def \hz {\hat{z}}
\def \zbar {\bar{z}}
\def \zstar {z^*}
\NewDocumentCommand\qts{O{s} O{t}}{q_{#1}^{#2}}
\NewDocumentCommand\betat{O{t}}{\beta^{#1}}
\NewDocumentCommand\betats{O{s} O{t}}{\beta_{#1}^{#2}}
\NewDocumentCommand\gamt{O{t}}{\gamma^{#1}}
\NewDocumentCommand\gams{O{s}}{\gamma_{#1}}
\NewDocumentCommand\gamts{O{s} O{t}}{\gamma_{#1}^{#2}}
\NewDocumentCommand\delts{O{s} O{t}}{\delta_{#1}^{#2}}
\NewDocumentCommand\delt{O{t}}{\delta^{#1}}
\def \hp {\hat{p}}
\NewDocumentCommand\ptlt{O{t}}{\tilde{p}^{#1}}
\NewDocumentCommand\pdbtlt{O{t}}{\dbtilde{p}^{#1}}
\NewDocumentCommand\bpt{O{t}}{\bp^{#1}}
\NewDocumentCommand\hpiN{O{i}}{\hat{p}^N_{#1}}
\def \hpN {\hpiN[]}
\NewDocumentCommand\hpi{O{i}}{\hp_{#1}}
\NewDocumentCommand\bpi{O{i}}{\bp_{#1}}
\def \pstar {p^*}
\def \bp {\bar{p}}
\NewDocumentCommand\pt{O{t}}{\myp^{#1}}
\def \ptt {\addtt{\myp}}
\def \ptltt {\ptlt[t-1]}
\def \pdbtltt {\pdbtlt[t-1]}
\NewDocumentCommand\pitl{O{i}}{\tilde{\pmb{p}}_{\bci[#1]}}
\NewDocumentCommand\pit{O{t}O{i}}{{p}^{#1}_{\bci[#2]}}
\NewDocumentCommand\putit{O{t}O{i}}{\utilde{p}^{#1}_{\bci[#2]}}
\NewDocumentCommand\putt{O{t}}{\putit[#1][]}
\NewDocumentCommand\mypi{O{i}}{p_{\bci[#1]}}
\def \myp {p}
\NewDocumentCommand\qit{O{t}O{i}}{q^{#1}_{#2}}
\NewDocumentCommand\pii{O{i}}{\pi_{\bci[#1]}}
\NewDocumentCommand\bpii{O{i}}{\bar{\pi}_{\bci[#1]}}
\NewDocumentCommand\Pii{O{i}}{\Pi_{\bci[#1]}}
\NewDocumentCommand\piit{O{t}O{i}} {\pii[#2]^{#1}}
\NewDocumentCommand\piiutt{O{t}O{i}} {\utilde{\pi}_{#2}^{#1}}
\NewDocumentCommand\utpiit{O{t}O{i}} {\utilde{\pi}_{#2}^{#1}}
\NewDocumentCommand\piitt{O{i}} {\piit[t-1][#1]}
\NewDocumentCommand\piibrt{O{t}}{\{\pii^{#1}\}}
\NewDocumentCommand\hpii{O{i}}{\hat{\pi}_{\bci[#1]}}
\NewDocumentCommand\piistar{O{i}}{\pi^*_{\bci[#1]}}
\NewDocumentCommand\hpiiN{O{i}}{\hat{\pi}^N_{\bci[#1]}}
\NewDocumentCommand\piibar{O{i}}{\bar{\pi}_{#1}}
\def \uy {\underline{y}}
\NewDocumentCommand\yt{O{t}}{y^{#1}}
\NewDocumentCommand\yit{O{t}O{i}}{y^{#1}_{#2}}
\NewDocumentCommand\ytlitj{O{t}O{i}O{j}}{\tilde{y}^{#1}_{#2\setminus\{#3\}}}
\NewDocumentCommand\yutlit{O{t}O{i}}{\utilde{y}^{#1}_{#2}}
\NewDocumentCommand\utyit{O{t}O{i}}{\utilde{y}^{#1}_{#2}}
\NewDocumentCommand\ytlit{O{t}O{i}}{\tilde{y}^{#1}_{#2}}
\NewDocumentCommand\uyit{O{t}O{i}}{\uy_{#2}^{#1}}
\def \blx {\pmb{x}} 
\NewDocumentCommand\blxt{O{t}}{\blx^{#1}}
\def \xstar {x^*}
\NewDocumentCommand\xt{O{t}}{x^{#1}}
\def \xtt {\addtt{x}}
\def \xunder {\underline{x}}
\NewDocumentCommand\xundert{O{t}}{\xunder^{#1}}
\NewDocumentCommand\blxundert{O{t}}{\underline{\blx}^{#1}}
\NewDocumentCommand\xunderit{O{t}O{i}}{\xunder^{#1}_{#2}}
\NewDocumentCommand\xunderavt{O{t}O{i}}{\xunder^{#1}_{\ave}}
\NewDocumentCommand\myxi{O{i}}{x_{#1}}
\NewDocumentCommand\xit{O{t}O{i}}{x^{#1}_{#2}}
\NewDocumentCommand\xitt{O{i}}{\xit[t-1][#1]}
\NewDocumentCommand\xavt{O{t}}{\xit[#1][\ave]}
\def \hx {\hat{x}}
\NewDocumentCommand\hxit{O{t}O{i}}{\hat{x}^{#1}_{#2}}
\NewDocumentCommand\hblxit{O{t}O{i}}{\hat{\pmb{x}}^{#1}_{#2}}
\NewDocumentCommand\xtilt{O{t}}{\tilde{x}^{#1}}
\NewDocumentCommand\blxtlt{O{t}}{\tilde{\blx}^{#1}}
\NewDocumentCommand\xtlit{O{t}O{i}}{\tilde{x}^{#1}_{#2}}
\NewDocumentCommand\xtlavt{O{t}}{\xtlit[#1][\ave]}
\def \barx {\bar{x}}
\def \xbar {\bar{x}}
\def \xbarN {\xbart[N]}
\NewDocumentCommand\bxit{O{t}O{i}}{\bar{x}^{#1}_{#2}}
\NewDocumentCommand\xbart{O{t}}{\bar{x}^{#1}}
\NewDocumentCommand\dit{O{t}O{i}}{d^{#1}_{#2}}
\NewDocumentCommand\Nit{O{t}O{i}}{\mathcal{N}^{#1}(#2)}
\NewDocumentCommand\Nitj{O{t}O{i}O{j}}{\mathcal{N}^{#1}(#2;\{#3\})}
\NewDocumentCommand\Ni{O{i}}{\mathcal{N}(#1)}
\NewDocumentCommand\Hij{O{i,j}}{H_{#1}}
\NewDocumentCommand\Wij{O{i,j}}{W_{#1}}
\NewDocumentCommand\sumNi{O{i}}{\tsum_{j \in \Ni[#1]} \Hij[j, #1]}
\def \blw {\pmb{w}}
\NewDocumentCommand\blwt{O{t}}{\blw^{#1}}
\NewDocumentCommand\ublwt{O{t}}{\underline{\blw}^{#1}}
\NewDocumentCommand\bblwt{O{t}}{\bar{\blw}^{#1}}
\NewDocumentCommand\tilwt{O{t}}{\tilde{\omega}^{#1}}
\NewDocumentCommand\Q{O{}}{Q_{#1}}
\def \bg {\bar{g}}
\NewDocumentCommand\delx{O{x}}{\delta_{#1}}
\NewDocumentCommand\delpit{O{t}}{\delta^{#1}_\pi}
\NewDocumentCommand\delpt{O{t}}{\delta^{#1}_p}
\NewDocumentCommand\delxt{O{t}}{\delta_x^{#1}}
\def \bu {\bar{u}}
\NewDocumentCommand\uit{O{t}}{u^{#1}_i}
\def \calM {\mathcal{M}}
\NewDocumentCommand\Memt{O{t}}{\calM_{#1}}
\NewDocumentCommand\Mit{O{t} O{i}}{\calM_{#2, #1}}
\NewDocumentCommand\Mpiitl{O{t} O{l} O{i}}{\calM^{\pi, #2}_{{#3},#1}}
\NewDocumentCommand\Mpiit{O{t} O{i}}{\calM^\pi_{{#2}, #1}}
\NewDocumentCommand\Mpiitcp{O{t} O{i}}{\calM^{\pi, \text{cp}}_{{#2}, #1}}
\NewDocumentCommand\Mst{O{t} O{s}}{\calM_{#2, #1}}
\NewDocumentCommand\Mitcp{O{t} O{i} O{}}{\Mit[#1][#2]^{\text{cp} #3}}
\NewDocumentCommand\Mitcm{O{t} O{i}}{\Mit[#1][#2]^{\text{comm}}}
\NewDocumentCommand\Mstcp{O{t} O{s} O{}}{\Mit[#1][#2]^{\text{cp} #3}}
\NewDocumentCommand\Mstcm{O{t} O{s} }{\Mit[#1][#2]^{\text{comm}}}
\NewDocumentCommand\Ki{O{i}}{\mathcal{K}_{#1}}
\NewDocumentCommand\Si{O{i}}{\mathcal{S}_{#1}}
\def \Span {\text{span}}
\NewDocumentCommand\prox{O{}}{\text{prox}_{#1}}
\NewDocumentCommand\Kl{O{l}} {K_{#1}}
\def \Mt {\calM_t}
\def \fstar {f_*}
\def \skipdisplay {\setlength{\abovedisplayskip}{7pt}
\setlength{\belowdisplayskip}{7pt}
\setlength{\abovedisplayshortskip}{7pt}
\setlength{\belowdisplayshortskip}{7pt}}
\NewDocumentCommand\Lfp{O{p}}{L_{f, #1}}
\newcommand{\dbtilde}[1]{\accentset{\approx}{#1}}
\def\endproof{{\ \hfill\hbox{%
      \vrule width1.0ex height1.0ex
    }\parfillskip 0pt}\par}
\title{ Optimal methods for \textblue{convex} risk averse distributed optimization 
\thanks{This work is partially supported by the ONR grant N00014-20-1-2089 and the NSF AI Institute grant NSF-2112533.
Coauthors of this paper are listed according to the alphabetic order.}
}
\author{
    Guanghui Lan\thanks{H. Milton Stewart School of Industrial \& Systems Engineering, 
                             Georgia Institute of Technology, Atlanta, GA, 30332 .
                            (email: {\tt george.lan@isye.gatech.edu}).}
    \and Zhe Zhang\thanks{H. Milton Stewart School of Industrial \& Systems Engineering, 
                        Georgia Institute of Technology, Atlanta, GA, 30332 .
                        (email: {\tt jimmy\_zhang@gatech.edu}).}
    }
\date{\today}
\begin{document}

\maketitle

\begin{abstract}
This paper studies the communication complexity of convex risk-averse optimization over a  network. 
 The problem generalizes the well-studied risk-neutral finite-sum distributed optimization problem and its importance stems from the need to handle risk in an uncertain environment. For algorithms in the literature, a gap exists in communication complexities for solving risk-averse and risk-neutral problems. We propose two distributed algorithms, namely the distributed risk-averse optimization (DRAO) method and the distributed risk-averse optimization with sliding (DRAO-S) method, to close the gap. Specifically, the DRAO method achieves optimal communication complexity by assuming a certain saddle point subproblem can be easily solved in the server node. The DRAO-S method removes the strong assumption by introducing a novel saddle point sliding subroutine which only requires the projection over the ambiguity set $P$. We observe that the number of $P$-projections performed by DRAO-S is optimal. Moreover, we develop matching lower complexity bounds to show the communication complexities of both DRAO and DRAO-S to be unimprovable. Numerical experiments are conducted to demonstrate the encouraging empirical performance of the DRAO-S method.
 \vspace{.1in}

 \noindent {\bf Keywords:} risk-averse optimization, distributed optimization, first-order algorithm, convex optimization, lower complexity.

 \noindent {\bf AMS 2000 subject classification:} 90C25, 90C15, 68W15, 49M27, 49M29\\
\end{abstract}
\section{Introduction}

Consider the following risk-averse optimization problem over a star-shape (worker-server) communication network \textblue{\cite{star_network}}:
\begin{align}\label{eq:orig_prob}
\begin{split}
\min_{x \in X} \{f(x):= &\max_{p \in P} \sumi p_i f_i(x) -  \rho^*(p) + u(x)\},
\end{split}
\end{align}
where $P \subseteq \Delta^m_+ :=\{p \in \R^m| \tsum_{i=1}^m p_i = 1, p_i \ge 0 \} $ and
$X \subseteq \R^n$ and $\Pii \subseteq \R^{m_i}$ are closed and convex, and functions $\myfi(x)$, $u(x)$, and $\rho^*(p)$ are proper closed and convex.
We assume the scenario (or local) cost function $\myfi$ to be only available to worker node $i$ and focus on the situation where  $\myfi$'s are either all smooth or all structured non-smooth. \textblue{We use the following generic representation for both types of $\myfi$'s:
$$ f_i(x) = \max_{\pii \in \Pii} \inner{A_i x}{\pii} -  \fistar(\pii),$$
where $\Pii$ is a closed convex set and $\fistar$ is a proper, closed and convex function.  
Specifically, if $\myfi$ is smooth, $A_i$ is the identity matrix, $I \in \R^{n \times n}$, $\fistar$ is the Fenchel conjugate to $f_i$, and $\Pii = {\dom}(\fistar)$\footnote{$\dom(\fistar):=\{\pii \in \R^n: \fistar(\pii) < \infty\}$.}. If $\myfi$ is structured non-smooth \cite{Nestrov2004Smooth},
 then $A_i \in \R^{m_i \times n}$ is a linear operator, $\Pii$ is bounded, and the $\fistar$-prox mapping can be solved efficiently \cite{LanBook}. This type of structured non-smooth function has found a wide range of applications, including total variation regularization in image processing \cite{total_variation_rudin1992nonlinear}, low-rank tensor \cite{kolda2009tensor,tomioka2011tensor}, overlapped group lasso \cite{mairal2011lasso,tibshirani2005graph_lasso}, and graph regularization \cite{jacob2009group,tibshirani2005graph_lasso}.}   Additionally, we assume the (strongly) convex regularization term $u(x)$ and the risk measure $(\rho^*, P)$ are available to the server node.

If the ambiguity set $P$ consists of only a fixed probability vector $\bar{p}$, say the empirical distribution,  \eqref{eq:orig_prob} is called risk-neutral, and it can be written as a finite-sum problem (see Chapter 5 of \cite{LanBook}):
\begin{equation}\label{eq:finite_sum}
\min_{x \in X} \sumi \bar{p}_i \myfi(x) + u(x).\end{equation}
However, if the costs among workers are imbalanced (different importance, limited availability of data, etc.), taking an average over the costs across workers might be meaningless or operationally wrong.
  In such cases, non-trivial $\rho^*$ and $P$ in \eqref{eq:orig_prob} generalizes risk-neutral optimization to risk-averse optimization and distributionally robust optimization (DRO). Specifically, if $\pmbf :=(\myfi[1],\ldots, \myfi[m])$ denotes the scenario costs and $\rho$ is a convex risk measure, it can be formulated as \eqref{eq:orig_prob} using Fenchel conjugates (see Definition 6.4 and Theorem 6.5 of~\cite{shapiro2014lectures}):
 \begin{equation}\label{eq:rho-def}
 \rho(\pmbf):= \argmax_{p \in P} \inner{p}{\pmbf} - \rho^*(p).\end{equation}
 For example, if we denote the (reference) probability mass function by $\bar p$,  some widely used risk measures and their conjugates are given as follows. \textblue{
\begin{itemize}
\item Mean semideviation of order $r$:
$$\rho(\pmbf) = \sumi {\bar p}_i \myfi +  c(\sumi{\bar p}_i [\myfi - \E\pmbf]_+^r)^{1/r}
=\max_{p \in P} \inner{p}{\pmbf},$$ where the ambiguity set $P:=\{p \in \Delta^m_+: \exists \zeta_i \geq 0 \text{ s.t. } p_i = \bar{p}_i (1 + \zeta_i - \inner{ \zeta}{\bar p}),\norm{\zeta}[s] \leq c \}$,
  $c \in [0, 1]$ and $\norm{\cdot}[s]$ is the conjugate norm to $\norm{\cdot}[r]$, i.e., $1/s + 1/r = 1$.
\item Entropic risk: 
$$\rho(\pmbf) =  \tau^{-1} \log \sumi {\bar p}_i \exp(\tau \myfi) = \max_{p \in \Delta^+_m} \inner{p}{\pmbf} - \tau^{-1}\sumi p_i \log( p_i /  {\bar p}_i).$$
\item Distributionally robust objective: $\rho(\pmbf) := \sup_{p \in P} \inner{\pmbf}{p}$ for some uncertainty set $P$.\\
\end{itemize}
The incorporation of all the above risk measures makes our problem \eqref{eq:orig_prob} more challenging than the finite-sum problem \eqref{eq:finite_sum}.  We note that \eqref{eq:orig_prob} also covers a popular risk measure CV@R with $\rho(\pmbf) = \max_{p \in \Delta^m_+, p_i \in [0,  {\bar p}_i/\alpha]} \inner{p}{\pmb{f}}$, where the parameter $\alpha >0$ captures the degree of risk aversion. The risk measure admits a finite-sum reformulation, $\rho(\pmbf) = \inf_{t}   \sumi  {\bar p}_i \{ [\myfi - t]_+ /\alpha + t\}$, but the function $ \tilde f_i(x,t):=[\myfi(x) - t]_+ /\alpha + t$ is nonsmooth with a very large Lipschitz-continuity constant, even if the original $f_i$ is smooth. In contrast, our conjugate formulation avoids the situation.
}

As alluded to earlier, we assume the communication network to have a star-topology where a computationally powerful central server node is connected directly to many worker nodes.
During a communication round, all the worker nodes send their local information to the server, and the server node broadcasts processed information to all worker nodes. This type of distributed optimization framework is very popular in machine learning, \textblue{such as federated learning \cite{kairouz2021advances},
where the data are held privately in each worker (device) and the central server learns a global model by communicating with the workers.}
Since communication in a network tends to be slower than computation inside a single node by orders of magnitude, and less communication implies better protection of privacy, one of the main goals of this paper is to study the system's communication complexity, i.e., the number of communication rounds required to find a quality solution $\bar x \in X \text{ s.t. } f(\bar x) - f(x^*) \le \epsilon$, where $x^*$ denotes an optimal solution of \eqref{eq:orig_prob}.

Risk-averse optimization problems of form \eqref{eq:orig_prob} have a wide range of applications in portfolio selection \cite{Markowitz}, renewable energy \cite{martinez2015risk}, power security \cite{javanbakht2014risk}, telecommunication \cite{larsson2014MIMO} and climate change planning \cite{tol2009climate}. As a concrete example, consider the massive multiple-input multiple-output (MIMO) system 
in the 5G communication network consisting of multiple active antennas and terminal devices \cite{larsson2014MIMO,parkvall20175G}. The multiple active antennas at the base station should be configured to ensure stable connections for all the terminal devices in its service area, rather than a high connection speed when averaged over all devices. Such an objective can be formulated as \eqref{eq:orig_prob} with $\myfi$ being the negative data speed at the $i$th terminal device and $(P, \rho^*)$ being the conjugate to the mean-semideviation risk measure. To gather information for the downlink and uplink channels, the base station needs to communicate with terminal devices. So in a highly mobile environment, 
finding a quality antenna connection quickly, i.e., with a only few rounds of communication,  is crucial. 
A second example is motivated by climate change. The state government may wish to invest in infrastructure to prepare for it. Each scenario cost function $\myfi$ may denote the long-term economic cost estimated by a certain climate model and a certain impact model \cite{tol2009climate}. To avoid downside risk,  \textblue{$(P, \rho^*)$ could be chosen as the conjugate to some risk measures mentioned above, say the entropic risk measure.} Because these models involve large amounts of data and costly simulation runs, we might need to store $\myfi$'s on separate computing nodes and use a communication network to find the optimal policy. In this case, a small number of communication rounds is crucial for efficiency.

\textblue{
Our formulation is also applicable to the computationally demanding distributionally robust optimization (DRO). DRO provides a powerful framework for learning from limited data \cite{kuhn2019wasserstein} and data-driven decision-making \cite{bertsimas2006robust,ye2016likelihood}. Under the assumption of finite scenario support $\Xi = [\xi_1,\ldots, \xi_m]$, we could use $\myfi(x):= f(x, \xi_i)$ to denote the cost under scenario $\xi_i$ and choose $\rho$ to be the risk measure induced by the corresponding probability uncertainty set \cite{shapiro2014lectures}. When implemented on a distributed communication network with the evaluation of $f(x, \xi_i)$'s performed in parallel on different machines, a small number of communication rounds is essential for fast computation.}

\textblue{
Additionally, the risk-averse formulation in \eqref{eq:orig_prob} could also be useful for federated learning between organizations, i.e., the cross-silo federated learning \cite{kairouz2021advances}. Cross-silo federated learning has found applications in finance risk prediction in reinsurance  \cite{Reinsurance}, drug discovery \cite{Drug-discovery}, electronic health record mining \cite{medical-record} and smart manufacturing \cite{smart-manufacturing}. If the workers represent demographically partitioned organizations or geographically partitioned data centers, we could choose $\rho$ to be the mean-semideviation risk measure to ensure that the trained model offers consistent performance across different populations. Risk measures may also provide incentives for competing organizations to cooperate. For example, consider the operations of competing airlines. When $\myfi(x)$ is the expected relative operation cost of $i$\ts{th} airline, choosing $\rho(\pmb{f}(x)):=\max_{i \in [m]} f_i(x)$ ensures the new policy $x$ benefits every participant. In both cases, a smaller number of communication rounds implies better protection of privacy.

}

Despite the importance of problem~\eqref{eq:orig_prob}, however, the study of its communication complexity and the development of efficient algorithms are rather limited. Since \eqref{eq:orig_prob} can be viewed as a trilinear
saddle point problem, we can potentially apply some recently developed first-order algorithms (e.g. \cite{zhang2019efficient,zhang2020optimal})
for solving it. However, these methods are designed without special consideration for communication burden. 
The most related algorithm is perhaps the sequential dual (SD) method, which was first proposed in \cite{zhang2019efficient} for the structured non-smooth problem and later extended in \cite{zhang2020optimal} to the smooth problem. 
The method is single-loop, so a direct implementation on a communication network requires one communication round in each iteration, leading to communication complexities of $\bigO(\sqrt{\Lf}\DX/\sqrt{\ep} + \DP\DPi\MA\DX/\ep)$ and $\bigO( \DPi\DX/\ep+\DP\DPi\MA\DX/\ep)$ for the smooth and the structured non-smooth problems, respectively.
Here $\Lf$, $\DPi$, $\MA$, $\DP$, and $\DX$ correspond to the overall smoothness constant, the dual radius, the operator norm of $\Ai$, the radius of $P$, and the distance to the optimal solution (see Tables 1.1 and 1.2, and Section 3 for their precise definitions).
On the other hand, for the risk-neutral problem \eqref{eq:finite_sum} with $P:=\{\bar p\}$,  direct distributed implementations of the Nesterov accelerated gradient method \cite{Nes83} and the primal-dual algorithm \cite{chambolle2011first} can achieve communication complexities of $\bigO(\sqrt{\Lf}\DX/\sqrt{\ep})$  and $\bigO(\DPi\MA\DX/\ep)$ for the smooth and the structured non-smooth problems, respectively, which were shown to be tight (see, e.g., \cite{scaman19}).  Clearly, there exists a significant gap in communication complexities, especially for smooth problems where the $\bigO(1/\ep)$ communication complexity for the risk-averse setting is much larger than the $\bigO(1/\sqrt{\ep})$ complexity for the risk-neutral setting. 
Therefore we pose the following research question:
\begin{empheq}[box=\fbox]{align*}
&\text{Can we solve the risk-averse problem over a star-shape network with the}\\
&\text{same communication complexity as the finite-sum problem?}
\end{empheq}

This paper intends to provide a positive answer to this question in three steps.

First, we propose a conceptual distributed risk-averse optimization (DRAO) method. It is inspired by works of Nesterov (Section 2.3.1 of \cite{nesterov2003introductory}) and Lan \cite{Lan15Bundle}  on composite optimization of the form $\min_{x} \rho(\pmbf(x))$ for a smooth vector function $\pmbf$. While Nesterov \cite{nesterov2003introductory} considers the problem with $\rho (\pmbf(x)) = \max_{i=1, \ldots, m} f_i(x)$, Lan~\cite{Lan15Bundle} generalizes $\rho$ to any monotone convex function. They can achieve an $\bigO(1/\sqrt{\ep})$ first-order (FO) oracle  complexity of $\pmbf$ by incorporating the following inner-linearization prox-mapping into the accelerated gradient descent (AGD) method or into the accelerated prox-level (APL) method:\textblue{
\begin{equation}\label{eq:prox-subproblem}
\xt \leftarrow \targmin_{x \in X}\ \rho\left(\myfi[1](\xundert)+ \inner{\grad \myfi[1](\xundert)}{x - \xundert}, \ldots, \myfi[m](\xundert)+ \inner{\grad \myfi[m](\xundert)}{x - \xundert}\right) + \tfrac{\eta}{2}\normsq{x -  x^{t-1}}.\end{equation}
Such an update is a simplified version of \eqref{eq:orig_prob} with $\myfi(\xundert) + \inner{\grad \myfi[i](\xundert)}{x - \xundert}$ denoting some (iterative) linearization of $\myfi$ at $\xundert$ and $\tfrac{\eta}{2}\normsq{x -  x^{t-1}}$ being the proximal term.
Similarly, we modify the SD method by combining the $p$ and $x$-prox updates into a single $(x, p)$-prox update given by 
\begin{align}
\xt \leftarrow& \argmin_{x \in X} \max_{p \in P} \sumi p_i [\inner{x}{\Ai \piit} - \fistar(\piit)] -\rhostar(p)+ u(x)+ \tfrac{\eta}{2} \normsq{x - \xtt}, \label{eq:joint_p_x}
\end{align}
where $\inner{x}{\Ai \piit} - \fistar(\piit)$ also represents some (iterative) linearization of $\myfi$ specified by the dual variable $\piit$. In fact, rewriting $\rho$ in its primal form \eqref{eq:rho-def} shows \eqref{eq:joint_p_x} to be equivalent to 
\begin{align*}
\xt \leftarrow  \argmin_{x \in X} \rho\left(\inner{x}{\Ai[1]\piit[t][1]} - \fistar[1](\piit[t][1]),\ldots,\inner{x}{\Ai[m]\piit[t][m]} - \fistar[m](\piit[t][m])\right) + u(x) + \tfrac{\eta}{2} \normsq{x - \xtt}[],
\end{align*}
which matches \eqref{eq:prox-subproblem} if $\piit$ is selected to be $\grad \myfi(\xundert)$ for smooth $\myfi$'s.}
\textblue{Such a modification of the SD method leads to the DRAO method. As shown in Table \ref{tb:com_complexity}, it achieves the optimal FO oracle complexities for $\pmbf$ (or $\Pii$-projection complexities) for both the smooth and the structured non-smooth problems.} Since $(\rho^*, P)$ is available to the server, \eqref{eq:joint_p_x} can be performed entirely on the server, so the communication complexities are the same (shown in Table \ref{tb:com_complexity}). However,  this approach requires $\rho$ to be simple so that \eqref{eq:joint_p_x} can be efficiently solved. This assumption might be too strong in practice. For example, if $m$ is large, the $(x,p)$-prox update in \eqref{eq:joint_p_x} with either the mean-semideviation risk measure $\rho$ or the Kantorovich ambiguity set $P$ is known to be computationally challenging. 
\begin{table}[h]
\setlength\extrarowheight{3pt}
\begin{minipage}{\textwidth}
\caption[\small Caption for LOF]{Communication Complexity and FO Oracle Complexity of $\pmbf$ for DRAO and DRAO-S\footnote{\scriptsize $\MA=\max_{i \in [m]} \norm{\Ai}$, $\DPi= \max_{i \in [m]} \max_{\pii, \piibar \in \Pii}\norm{\pii - \piibar}$.}}\label{tb:com_complexity}
\vspace{-2mm}
\centering
\scriptsize
\begin{tabular}{|l|l|l|}
\hline
 & Convex ($\alpha = 0$) & strongly convex ($\alpha > 0)$ \\
\hline
Smooth & $\bigO(\sqrt{\Lf}\|\xt[0] - \xstar\|/\sqrt{\ep})$ & $\bigO(\sqrt{\Lf/\alpha}\log(1/\ep))$ \\
\hline
Structured Non-smooth & $\bigO(\MA \DPi \norm{\xt[0] - \xstar}/\ep )$ & $\bigO(\MA \DPi/\sqrt{\ep \alpha} )$ \\
\hline
\end{tabular}
\\
\caption[Caption for LOF]{\small $P$-projection and $X$-projection Complexity of DRAO-S \footnote{\scriptsize $\DP$ denotes $P$'s radius. $\tilde{M}$ denotes the operator norm of $\norm{\grad f_1(x),\ldots, \grad f_m(x)}$ over some bounded ball around $\xstar$ and $\bar{M}_{A\Pi}$ denotes the operator norm of $\norm{\Ai[1]\pii[1],\ldots, \Ai[m]\pii[m]}$ over the whole feasible region $\Pi$.} }\label{tb:p_complexity}
\vspace{-2mm}
\centering
\scriptsize
\begin{tabular}{|l|l|l|}
\hline
 & convex ($\alpha = 0$) & strongly convex ($\alpha > 0$) \\
\hline
Smooth & $\bigO(\DP \tilde{M} \|\xt[0] - \xstar\|/\ep)$ & $\bigO((\Lf/\alpha)^{1/4} \tilde{M} \DP/ \alpha \sqrt{\ep})$\footnote{\scriptsize Number of P-projections required to generate an $\ep$-close solution, i.e., $\norm{\xt[N] - \xstar}^2 \leq \ep$. } \\
\hline
Structured Non-smooth & $\bigO(\DP \bar{M}_{A\Pi} \|\xt[0] - \xstar\|/\ep)$ & $\bigO(\DP \bar{M}_{A\Pi} /\sqrt{\ep \alpha} )$ \\
\hline
\end{tabular}
\end{minipage}
\vspace{-3mm}
\end{table}


Second, we overcome the restrictive assumption 
of $\rho$ being simple by developing a saddle point sliding (SPS) subroutine. It replaces \eqref{eq:joint_p_x}  in the DRAO method by performing only a finite number of $P$-projections and $X$-projections to solve the saddle point subproblem inexactly. 
The new method, called distributed risk-averse optimization with sliding (DRAO-S), maintains the same communication complexities as DRAO while improving on its computation efficiency. Since each inner iteration of the sliding subroutine requires one $P$-projection and one $X$-projection, the total numbers of these projections are optimal in most cases \footnote{Except for the strongly convex smooth problem which is worse off by a factor of $(\Lf/\alpha)^{1/4}$.}. As shown in Table \ref{tb:p_complexity}, they match the lower bounds \cite{ouyang2021lower} for solving a single $(x, p)$ bi-linear saddle point problem, i.e., \eqref{eq:joint_p_x} with a fixed $\pi^t$ and $\etat=0$.
Such a result is similar to that of the gradient sliding (GS) method \cite{lan2016gradient}  for solving an additive composite problem,
\begin{equation}\label{eq:composite-add}
\min_{x \in X}f(x) + g(x).\end{equation}
The GS method can achieve both optimal $f$-oracle and optimal $g$-oracle complexities. 
However, our nested composite problem appears to be more challenging. This is because for a fixed $x$,  the optimal dual variables $p$  and $\pi$ in \eqref{eq:orig_prob} are dependent, while the optimal dual variables $\pi_f$ and $\pi_g$ (associated with the saddle point reformulation of \eqref{eq:composite-add} through bi-conjugation \cite{Beck2017First}) are independent. In fact, \eqref{eq:composite-add} can always be rewritten as a nested composite problem (see the discussion in Example 3 of \cite{Lan15Bundle}). Additionally, the SPS subroutine in the DRAO-S method is initialized differently from the usual sliding subroutines in \cite{lan2016gradient} and \cite{lan2021graph}. Such a modification simplifies both the outer loop algorithm and the convergence analysis. This simplification could motivate the application of the sliding technique to a wider range of problems. Furthermore, an interesting feature of the DRAO-S method is that its inner loop, the SPS subroutine, can adjust dynamically to the varying levels of difficulty, characterized by $\norm{\pi^t}$, of the saddle point subproblem \eqref{eq:joint_p_x}. This allows us to remove the assumption of the smooth $\myfi$'s being Lipschitz continuous, which is required by the SD method in \cite{zhang2020optimal}, but may not hold if the domain $X$ is unbounded.

Third, we show that the communication complexities of both DRAO and DRAO-S are not improvable by constructing lower complexity bounds. Previous developments are restricted to a trivial $P$  and the smooth problem \cite{scaman2017optimal}.  We propose a more general computation model which includes both the $f_i$-gradient oracle and the $\fistar$-prox mapping oracle, and introduce a different set of problem parameters appropriate for the risk-averse problem. They allow us to develop, for a non-trivial $P$ and for both the smooth and the structured non-smooth problems, new lower complexity bounds matching the upper communication complexity bounds possessed by  DRAO and DRAO-S.

The rest of the paper is organized as follows. Preliminary Section 2 reviews a gap function in \cite{zhang2019efficient} which will guide the algorithm design. Section 3 and Section 4 propose and analyze the DRAO and DRAO-S methods, respectively. Section 5 provides lower communication complexity bounds and Section 6 provides some encouraging numerical results. Finally, some concluding remarks are made in Section 7.  



\subsection{Notation \& Assumptions}\label{subsec:notation}
The following assumptions and notations will be used throughout the paper. 
\begin{itemize}
    \item The set of optimal solutions to \eqref{eq:orig_prob}, $X^*$, is nonempty.  $\xstar$ denotes an arbitrary optimal solution, and $f_*$ denotes the optimal objective, $f(\xstar)$. $\Rx$ represents an estimate of the distance from the initial point to $\xstar$, i.e., $\Rx \geq \norm{\xt[0] -\xstar}.$ 
    \item $\DP$ denotes the radius of $P$, i.e.,  $\DP := \max_{p,\bar p \in P}\sqrt{2 U(p, \bar p)}$ where $U$ is the chosen Bregman distance function \cite{LanBook}.
    \item $\pmb{f}: \R^n \rightarrow \R^m$ denotes a vector of scenario cost functions, $[f_1; ...; f_m]$, and $\grad \pmb{f}(x): \R^n \rightarrow \R^{m \times n}$ denotes the Jacobian matrix function.
    \item \textblue{We refer to the following computation as either a prox mapping or a projection:
    \begin{equation}\label{eq:prox-def}
    \hat w \leftarrow \argmin_{w \in W} \inner{g}{w} + h(w) + \tau V(w;\bar w),\end{equation}
    where the vector $g$ represents some ``descent direction'' (the gradient for example), and 
    $h(w)$ denotes a simple convex function \cite{LanBook}.  $V$ denotes the Bregman distance function, $\bar w$ is a prox center, and $\tau$ is a stepsize parameter. Together they ensure the output $\hat w$ is close to $\bar w$. }
    In particular, 
    we call it an $x$, a $\pii$ or a $p$-prox mapping (an $X$, a $\Pii$ or a $P$-projection) if  $W=X$ and $h\equiv 0$, $W=\Pii$ and $h = \fistar$, or $W=P$ and $h= \rho^*$, respectively. Sometimes, the term prox update also is used to emphasize that the prox mapping is performed to update $w^{t}=\hat w$ from $\bar w=w^{t-1}$.
    
\end{itemize}

\section{Preliminary: $Q$-gap function}
We introduce a gap function \cite{zhang2019efficient} which will guide our algorithmic development throughout the paper.
For notation convenience, we  denote $\pi \equiv (\pi_1, \ldots,\pi_m)$ and $\Pi \equiv \Pi_1 \times \Pi_2 \times \ldots \times \Pi_m$ so that \eqref{eq:orig_prob} can be written  
as 
\begin{equation}\label{eq:prob}
\min_{x\in X} \max_{p \in P} \max_{\pi\in \Pi} \{\La(x; p, \pi) := \sumi p_i \left(\inner{A_i x}{\pii} - \fistar(\pii)\right)- \rho^*(p) + u(x)\}.
\end{equation}
The following duality relation between the reformulation and the original problem \eqref{eq:orig_prob} is valid (see Proposition 2.1 of \cite{zhang2019efficient}).
\begin{lemma}\label{lm:duality}
Let $f$ and $\La$ be defined in \eqref{eq:orig_prob} and \eqref{eq:prob}, then the following statements hold for all $ x \in X$.
\begin{itemize}
\item[a)] Weak Duality: $f(x) \geq \La(x, p, \pi)$ for all $p \in P, \pi \in \Pi$.
\item[b)] Strong Duality: $f(x) = \La(x, \hat{p}, \hat \pi)$ for any
 $\hpii \in \argmax_{\pii \in \Pii} \inner{\pii}{\Ai x} - \fistar(\pii)$, $i=1, \ldots,m$, and any $\hat{p} \in \argmax_{p \in P} \sumi p_i \myfi(x)- \rho^*(p)$.
\end{itemize}
\end{lemma}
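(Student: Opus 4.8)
The plan is to derive both parts directly from the nested maximum-definitions of $f$ in \eqref{eq:orig_prob} and of each $\myfi$, using only that every $p \in P \subseteq \Delta^m_+$ has nonnegative entries.

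For weak duality (part a)), I would fix $x \in X$ together with arbitrary $p \in P$ and $\pi \in \Pi$. Dropping the outer maximum in the definition of $f$ gives $f(x) \geq \sumi p_i \myfi(x) - \rho^*(p) + u(x)$. The generic representation $\myfi(x) = \max_{\pii \in \Pii} \inner{\Ai x}{\pii} - \fistar(\pii)$ yields $\myfi(x) \geq \inner{\Ai x}{\pii} - \fistar(\pii)$ for every $i$; since $p_i \geq 0$, multiplying this by $p_i$ and summing preserves the inequality, so $\sumi p_i \myfi(x) \geq \sumi p_i(\inner{\Ai x}{\pii} - \fistar(\pii))$. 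Substituting the latter bound into the former yields exactly $f(x) \geq \La(x, p, \pi)$. The only structural ingredient beyond unfolding definitions is the sign condition $p_i \geq 0$.

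For strong duality (part b)), I would verify that the prescribed maximizers turn both of the above inequalities into equalities. By the defining property of $\hpii$ we have $\myfi(x) = \inner{\Ai x}{\hpii} - \fistar(\hpii)$ for each $i$, and hence $\sumi \hat p_i(\inner{\Ai x}{\hpii} - \fistar(\hpii)) = \sumi \hat p_i \myfi(x)$ as an exact equality (no sign restriction is needed). By the defining property of $\hat p$, $\sumi \hat p_i \myfi(x) - \rho^*(\hat p) = \max_{p \in P}[\sumi p_i \myfi(x) - \rho^*(p)]$. Chaining these two identities and adding $u(x)$ gives $\La(x, \hat p, \hat \pi) = \max_{p \in P}[\sumi p_i \myfi(x) - \rho^*(p)] + u(x) = f(x)$, which is the claim.

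I expect no genuine obstacle here: the statement is a direct consequence of interchanging and unfolding the nested maxima, and is recorded as Proposition 2.1 of \cite{zhang2019efficient}. The single point deserving care is the appeal to $p_i \geq 0$ in part a), which is precisely what guarantees that lower-bounding each $\myfi$ by an arbitrary dual linearization still lower-bounds the $p$-weighted sum; part b) needs no such condition, as it merely substitutes exact maximizers into an equality.
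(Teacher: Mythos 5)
Your proof is correct and is exactly the direct unfolding argument that the paper itself relies on implicitly (it does not reprove the lemma but cites Proposition 2.1 of the referenced work): part a) drops the outer maximum over $p$ and lower-bounds each $\myfi(x)$ by an arbitrary dual linearization, with $p_i \ge 0$ (from $P \subseteq \Delta^m_+$) justifying the weighted sum, and part b) substitutes the exact maximizers to turn both inequalities into equalities. No gaps; the one point you flag as needing care (the sign condition in part a)) is indeed the only nontrivial ingredient.
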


We measure the quality of a feasible solution $z=(x, p, \pi)$  by a gap function $Q$ 
associated with some feasible reference point $\hz:=(\hx; \hp, \hat \pi)$: 
\begin{equation}\label{eq:Q_func}
Q(z; \hz) := \La(x; \hp, \hat \pi) - \La(\hx; p, \pi).
\end{equation}
The $Q$ function provides a bound on the function optimality gap from above.
\begin{lemma}\label{lm:sad_ex}
Let $Q$ be defined in \eqref{eq:Q_func}, then 
\begin{equation}\label{eq:optimality_gap}
f(x) - f(\xstar) \leq \max_{\hp \in P, \hat \pi \in \Pi} Q((x;p,\pi); (\xstar; \hp, \hat \pi) ).
\end{equation}
Moreover, the optimal solution $\xstar$ of \eqref{eq:orig_prob}, together with some $\piistar\in \argmax_{\pii \in \Pii} \inner{\pii}{\Ai \xstar} - \fistar(\pii)$, $i=1,\ldots,m$, and some $\pstar \in \argmax_{p \in P} \sumi p_i \myfi(\xstar) - \rho^*(p)$   forms a saddle point $\zstar:=(\xstar; \pstar, \pi^*)$ of \eqref{eq:prob}, i.e.,
\begin{equation}\label{eq:saddle_point}
Q(z ; \zstar) \geq 0, \ \forall z\equiv (x; p, \pi) \in X \times P \times \Pi.
\end{equation}
\end{lemma}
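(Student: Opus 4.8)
The plan is to derive both claims directly from the weak and strong duality relations of Lemma~\ref{lm:duality} together with the definition of $Q$ in \eqref{eq:Q_func}, handling the bound \eqref{eq:optimality_gap} first and the saddle-point property \eqref{eq:saddle_point} second.

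For \eqref{eq:optimality_gap}, I would begin by observing that inside the maximization over $(\hp,\hat\pi)$ the second summand of $Q((x;p,\pi);(\xstar;\hp,\hat\pi)) = \La(x;\hp,\hat\pi) - \La(\xstar;p,\pi)$ does not depend on $(\hp,\hat\pi)$. Hence the maximum splits as $\max_{\hp \in P, \hat\pi \in \Pi}\La(x;\hp,\hat\pi) - \La(\xstar;p,\pi)$. The first term equals $f(x)$: pulling the $\pi$-maximization inside (legitimate since $\hp_i \ge 0$ on $\Delta^m_+$) recovers $\max_{\hp}[\sumi \hp_i \myfi(x) - \rho^*(\hp)] + u(x) = f(x)$, which is precisely the strong-duality identity of Lemma~\ref{lm:duality}(b). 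For the second term, weak duality (Lemma~\ref{lm:duality}(a)) applied at $x=\xstar$ gives $\La(\xstar;p,\pi) \le f(\xstar)$. Combining, the right-hand side equals $f(x) - \La(\xstar;p,\pi) \ge f(x) - f(\xstar)$, as desired.

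For the saddle-point inequality \eqref{eq:saddle_point}, I would use the definition to write $Q(z;\zstar) = \La(x;\pstar,\pi^*) - \La(\xstar;p,\pi)$ and split it as $[\La(x;\pstar,\pi^*) - \La(\xstar;\pstar,\pi^*)] + [\La(\xstar;\pstar,\pi^*) - \La(\xstar;p,\pi)]$. The second bracket is nonnegative because $(\pstar,\pi^*)$ lies in the argmax of $\La(\xstar;\cdot,\cdot)$, so that $\La(\xstar;\pstar,\pi^*) = \max_{p,\pi}\La(\xstar;p,\pi) \ge \La(\xstar;p,\pi)$ — again the strong-duality identity at $\xstar$. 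It remains to show the first bracket is nonnegative, i.e. that $\xstar$ minimizes $x \mapsto \La(x;\pstar,\pi^*)$ over $X$.

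I expect this last point to be the main obstacle, and I would not treat it as automatic: the map $(p,\pi)\mapsto\La(\xstar;p,\pi)$ is only bilinear, so an arbitrary pair of separate maximizers need not be a dual-optimal pair, and the content of the claim is that among the maximizers there is a \emph{compatible} choice. To establish this I would first invoke convex-concave minimax duality for the reduced bilinear problem $\min_{x\in X}\max_{p\in P}[\inner{p}{\pmbf(x)} - \rho^*(p) + u(x)]$, whose outer objective is exactly the convex function $f$, to obtain $\pstar \in \argmax_p(\cdot)$ with $\xstar \in \argmin_{x\in X}[\inner{\pstar}{\pmbf(x)} - \rho^*(\pstar) + u(x)]$. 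The first-order optimality of $\xstar$ then reads $0 \in \sumi \pstar_i \partial \myfi(\xstar) + \partial u(\xstar) + N_X(\xstar)$, and since $\partial \myfi(\xstar)$ is generated by the vectors $\Ai^\top\pii$ with $\pii \in \argmax_{\pii}[\inner{\Ai \xstar}{\pii} - \fistar(\pii)]$ (Danskin's theorem, together with convexity of $\Pii$ so that a single $\piistar$ realizes any convex combination), I can select maximizers $\piistar$ realizing this inclusion. With $(\pstar,\pi^*)$ so chosen, $\xstar$ minimizes $\La(\cdot;\pstar,\pi^*)$, the first bracket is nonnegative, and \eqref{eq:saddle_point} follows; this simultaneously confirms that the chosen $(\pstar,\pi^*)$ genuinely lie in the stated argmax sets.
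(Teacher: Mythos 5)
Your proposal is correct and follows essentially the same route as the paper: \eqref{eq:optimality_gap} via strong duality for $\max_{\hp,\hat\pi}\La(x;\hp,\hat\pi)=f(x)$ combined with weak duality at $\xstar$, and \eqref{eq:saddle_point} via the variational inequality for $x\mapsto\La(x;\pstar,\pi^*)$ at $\xstar$ plus maximality of $(\pstar,\pi^*)$ at $\xstar$. The only difference is that you expand the step the paper compresses into ``the first-order optimality condition implies there exist\ldots'': your explicit construction of a compatible pair $(\pstar,\pi^*)$ through minimax duality for the reduced $(x,p)$ problem and Danskin's theorem for the $\pii$'s is a valid and welcome elaboration of that assertion.
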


\begin{proof}
Let $\hat p$ and $\hat \pi$ by defined in Lemma \ref{lm:duality}.b).
By Lemma \ref{lm:duality}, we have
$f(x) - f(x^*) \le \La(x, \hat{p}, \hat \pi) - \La(x^*, p, \pi) =Q((x; p,\pi); (x^*; \hat p, \hat \pi))$,
from which \eqref{eq:optimality_gap} follows immediately. 
Next,  the first-order optimality condition of \eqref{eq:orig_prob} implies that there exist some $ \piistar \in \argmax_{\pii \in \Pii} \inner{\pii}{\Ai \xstar} - \fistar(\pii)$, $g^* \in \partial u(\xstar)$ and some $\pstar \in \argmax_{p \in P} \sumi p_i \myfi(\xstar) - \rho^*(p)$ such that $\langle \sumi \pstar_i \Aitr \piistar, x - x^*\rangle + u(x) - u(\xstar) \geq \langle \sumi \pstar_i \Aitr \piistar + g^*, x - x^*\rangle \ge 0$ for any $x \in X$. This observation
together with the definition of $\La$ in \eqref{eq:prob}
then imply that
$$\La(x; \pstar, \pi^*) \geq \La(\xstar; \pstar, \pi^*), \forall x \in X.$$
Moreover, due to our choice of $(\pstar, \pi^*)$, Lemma \ref{lm:duality} also implies that 
$$f(x^*) = \La(\xstar; \pstar, \pi^*) \geq \La(\xstar; p, \pi), \forall (p, \pi) \in P \times \Pi.$$
  \eqref{eq:saddle_point}  then follows from combining the preceding two inequalities.
\end{proof}
\vgap
In view of Lemma~\ref{lm:sad_ex},
we can use $Q$ to guide our search for an $\ep$-optimal solution. In particular, we decompose $Q$ into three 
sub-gap functions given by
\[
Q(\zbar;\hz) = Q_x(\zbar;\hz) + Q_p(\zbar;\hz) + Q_\pi(\zbar;\hz)
\]
with
\begin{equation}\label{eq:Q-decomp}
\begin{split}
&Q_{\pi}(\zbar;\hz) := \La(\bar{x}; \hp, \hat \pi) - \La(\bar{x}; \hp, \bar \pi) = \sumi \hp_i \left[\inner{A_i \bar{x}}{\hpii - \bar{\pi}_{\bci}} - \fistar(\hpii) + \fistar(\bpii)\right].\\
&Q_p(\zbar;\hz) := \La(\bar{x}; \hp, \bar \pi) - \La(\bar{x}; \bp, \bar \pi) = \sumi (\hpi - \bpi)[ \inner{A_i\bar{x}}{\bar{\pi}_{\bci}} -  \fistar(\bpii)] - (\rho^*(\hp) - \rho^*(\bp)).\\
&Q_x(\zbar;\hz) := \La(\bar{x}; \bar{p}, \bar{\pi}) - \La(\hx; \bar{p}, \bar{\pi}) = \inner{\sumi \bar{p}_{\bci}\Aitr\bar{\pi}_{\bci}}{\barx - \hx} + u(\bar x) - u(\hat x)  .\\ 
\end{split}
\end{equation}
\section{Upper Bounds for Communication Complexity}

We propose the distributed risk-averse optimization (DRAO) method to provide upper bounds on communication complexity. The algorithm and its convergence properties are presented in Subsection \ref{subsec:DRAO} and the convergence analysis is presented in Subsection \ref{sec:ConvergenceAnalysis}.

\subsection{The DRAO method}\label{subsec:DRAO}

The DRAO method is designed for solving the min-max-max trilinear saddle point problem in \eqref{eq:prob}. It is inspired by two algorithms for optimizing nested composite problems. First, the sequential dual (SD) algorithm,
proposed in  \cite{zhang2019efficient,zhang2020optimal}, performs sequential proximal updates to the dual variables $\pi$ and $p$ before updating the primal variable $x$. The DRAO method is built on similar sequential proximal updates for $\pi$, $p$, and $x$. Second, the accelerated prox-level (APL) algorithm, proposed in \cite{Lan15Bundle}, can reduce the number of outer iterations further by solving a more complicated proximal sub-problem \eqref{eq:prox-subproblem}. The DRAO method exploits this property by combining the separate $p$ and $x$ proximal updates into a single $(x, p)$ prox update step in the server node to save communication. 

Algorithm \ref{alg:cPD} describes a generic DRAO method which will be later specialized for solving the 
smooth and the structured nonsmooth problems.
As shown in Algorithm \ref{alg:cPD}, the server first sends an extrapolated point $\xtilt$ to the workers for them to perform dual proximal updates in Line 3. The only goal is to reduce the sub-gap function $Q_{\pii}$ (c.f. \eqref{eq:Q-decomp}). Here we intentionally leave the prox-function $\Vi$ in an abstract form because its selection and the resulting implementation will depend on the smoothness properties of $\myfi$. Next, the server collects the newly generated $A_i \piit$ in Line 4 to solve the $(x, p)$ prox update problem in Line 5 to reduce both $Q_x$ and $Q_p$. 


\begin{minipage}{.9\textwidth}
\begin{algorithm}[H]
\caption{A Generic \textbf{D}istributed \textbf{R}isk \textbf{A}verse \textbf{O}ptimization (DRAO) Method}
\label{alg:cPD}
\begin{algorithmic}[1]
\Require $\xt[0]=\xt[-1] \in X$, $\piit[0] \in \Pii $ in every node, stepsizes $\{\thetat\}$, $\{\etat\}$, $\{\taut\}$, and  weights $\{\wt\}$.

\For{$t =  1,2,3 ... N$}
\parState{Server computes $\xtilt\leftarrow \xtt + \thetat (\xtt - \xt[t-2])$. Broadcast it to all workers.}
\parState{Every worker computes $\piit \leftarrow \argmax_{\pii \in \Pii} \inner{A_i\xtilt}{\pii} - \fistar(\pii) - \taut V_i(\pii; \piitt)$, and evaluates $\vit \leftarrow \Aitr \piit $ and $\fistar(\piit)$.}
\parState{All workers send their $(\vit, \fistar(\piit))$ to the server.}
\parState{Server updates \\\,\,\,$\xt \leftarrow \argmin_{x \in X} \max_{p \in P} \sumi p_i (\inner{x}{\vit} - \fistar(\piit)) -\rhostar(p)+ u(x)+ \tfrac{\eta_t}{2} \normsq{x - \xtt} .$} 
\EndFor
\State \Return $\xbart[N]:= \tsum_{t=1}^N \wt \xt / (\sumt \wt). $
\end{algorithmic}
\end{algorithm}
\vgap
\end{minipage}

In the generic DRAO algorithm, we assume subproblems in Lines 3 and 5 to be solved
exactly by the workers and server, respectively.
Line 3 reduces to local gradient evaluations in the smooth case, while requiring a prox mapping for the structured nonsmooth case.
Line 5 requires us to solve a structured bilinear saddle 
point problem. We will discuss in detail
how to solve these problems approximately in the next section while focusing on the communication complexity now.


\vgap

First, we consider the smooth problem where all $A_i$'s are identity matrices and all $\myfi$'s are smooth such that 
$
\| \nabla \myfi(x_1) - \nabla \myfi(x_2)\| \le L_i \|x_1 - x_2\|, \forall x_1, x_2 \in \R^n.
$
\textblue{Since the Fenchel conjugate to a smooth convex function   is strongly convex \cite{hiriart1993convex}}, a natural choice of the prox-function $\Vi$ in DRAO would be the Bregman distance function generated by $\fistar$ given by 
\begin{equation} \label{eq:dual_bregman}
\Dfistar(\pii; \bpii) := \fistar(\pii) - \fistar(\bpii) - \inner{\fistarp(\bpii)}{\pii - \bpii}.
\end{equation}
It has been shown in~\cite{LanZhou18RPDG,LanBook,zhang2020optimal} that
the $\pii$ proximal update in Line 3 of Algorithm \ref{alg:cPD} is equivalent to a gradient evaluation. Specifically, with $\xundert[0]=\xt[0]$ and $\piit[0] = \grad \myfi(\xundert[0])$, Line 3 reduces to the following steps:
\begin{align}
    \xundert &\leftarrow (\xtilt + \taut \xundert[t-1] )/(1 + \taut),\label{eq:smooth_alg1}\\
    \piit &\leftarrow \grad \myfi(\xundert),\label{eq:smooth_alg2}\\
    \fistar(\piit)&\leftarrow  \inner{\xundert }{\piit}- \myfi(\xundert).\label{eq:smooth_alg3}
\end{align}
Plugging $\fistar(\piit)$ defined in \eqref{eq:smooth_alg3} into Line 5 of Algorithm~\ref{alg:cPD}, we can completely remove the information about
the conjugate function $\fistar$. Therefore, DRAO is a purely primal algorithm for the smooth problem.

To discuss the convergence properties of DRAO, we need to properly define some Lipschitz smoothness constants. For a given $p \in P$, let us denote $f_p(x) := \sumi p_i f_i(x)$.
Clearly, $f_p$ is a smooth convex function with Lipschitz continuous gradients, i.e.,
$
\|\nabla f_p(x_1) - \nabla f_p(x_2)\| \le L_p \|x_1 - x_2\|, \forall x_1, x_2 \in X. 
$
Moreover, $L_p \le \sumi p_i L_i$. We define an aggregate smoothness constant $\Lf$  to characterize the overall smoothness property of the risk-averse problem \eqref{eq:prob}:
\begin{equation}\label{eq:smo_cst}
\Lf = \max_{p \in P} L_p.
\end{equation}
 Observe that in the risk neutral case with $P=\{(1/m,\ldots,1/m)\},$ $\Lf$ is the global smoothness constant of $f$ \cite{scaman19}, which is upper bounded by $\tfrac{1}{m} \sumi L_i$.
In the robust case when $P= \Delta_m^+$, $\Lf = \max_i L_i$.
 
 Theorem~\ref{thm:central-sm} and \ref{thm:central-sm-str} below show the convergence rates of the DRAO method applied to the aforementioned smooth problems, for a non-strongly convex  $u(x)$ and a strongly convex $u(x)$ respectively.
 Their proofs are given in Section~\ref{sec:ConvergenceAnalysis}.

 \begin{theorem}\label{thm:central-sm}
Let $\Lf$ be defined in \eqref{eq:smo_cst}. If $\{\xt\}_{t=1}^N$ are generated by the DRAO method applied to  a smooth problem with
$$\wt=t,\ \thetat=(t-1)/t, \taut = (t-1)/2, \etat = 2\Lf/t.$$
Then for a reference point $\hz:=(\hx; \hp, \hat \pi)$ in which $\hpii = \grad \myfi(\bar x)$ for some $\bar x\in X$, we have 
\begin{equation}\label{eq:smooth_Q_conv}
\sumt \wt Q(\zt; \hz) + \Lf \normsq{\xt[N] - \hx} \leq \Lf \normsq{\xt[0] - \hx}.\end{equation}
In particular, the ergodic solution $\xbart[N]$ satisfies 
\begin{equation}\label{eq:smooth_f_conv}
f(\xbart[N]) - f(\xstar) \leq 2\Lf \normsq{\xt[0] - \xstar}/N(N+1), \forall N \geq 1.\end{equation}
\end{theorem}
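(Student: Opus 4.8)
The plan is to bound the weighted gap $\sumt \wt Q(\zt;\hz)$ by splitting $Q = Q_x + Q_p + Q_\pi$ as in \eqref{eq:Q-decomp} and extracting, from the optimality condition of each prox update, a one-step estimate for the corresponding sub-gap. The joint $(x,p)$-update in Line~5 produces a saddle point $(\xt,\pt)$ of a subproblem that is $\etat$-strongly convex in $x$ and concave in $p$. Its $x$-optimality gives the three-point bound
\[
Q_x(\zt;\hz) \le \tfrac{\etat}{2}\big(\normsq{\hx - \xtt} - \normsq{\hx - \xt} - \normsq{\xt - \xtt}\big),
\]
and its $p$-optimality gives $Q_p(\zt;\hz)\le 0$ directly, since $\pt$ exactly maximizes the linear-in-$p$ expression defining $Q_p$ (here $\rho^*$ need only be convex). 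For the $\pi$-update in Line~3 I would invoke the three-point lemma for the Bregman prox with $V_i=\Dfistar$; because it is written at the extrapolated point $\xtilt$, converting $Q_\pi(\zt;\hz)$ to $\xt$ leaves the coupling term $\sumi \hp_i \inner{\Ai(\xt-\xtilt)}{\hpii - \piit}$ plus the Bregman differences $\sumi \hp_i\taut\big(V_i(\hpii;\piitt) - V_i(\hpii;\piit) - V_i(\piit;\piitt)\big)$.

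Next I would multiply the three estimates by $\wt$ and sum over $t$. Since $\wt\etat = 2\Lf$ is constant, the $Q_x$ bound telescopes to $\Lf\normsq{\xt[0]-\hx} - \Lf\normsq{\xt[N]-\hx}$ while depositing a reservoir of negative terms $-\Lf\normsq{\xt-\xtt}$; the $Q_p$ terms drop. Summing the Bregman differences by parts, the choice $\wt\taut = t(t-1)/2$ together with $\taut[1]=0$ kills the boundary term and leaves a dual reservoir $-\sumi \hp_i\wt\taut V_i(\piit;\piitt) \le -\sumi\hp_i\tfrac{\wt\taut}{2L_i}\normsq{\piit-\piitt}$ (using $1/L_i$-strong convexity of $\fistar$). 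A smooth-case-specific subtlety arises here: the summation by parts also produces positive terms $\sum_{s} s\,V_i(\hpii;\piit[s])$, which I would cancel against the function-value content of $\wt Q_\pi$ via the conjugate identity $V_i(\hpii;\piit) = \Dfistar(\grad\myfi(\bar x);\grad\myfi(\xundert)) = \myfi(\xundert) - \myfi(\bar x) - \inner{\grad\myfi(\bar x)}{\xundert-\bar x}$, which holds precisely because $\hpii = \grad\myfi(\bar x)$ and $\piit = \grad\myfi(\xundert)$.

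The main obstacle is the coupling term. Using $\xtilt = \xtt + \thetat(\xtt-\xttt)$ with $\thetat = \wtt/\wt$, I would reindex $\sumt \wt\sumi \hp_i\inner{\Ai(\xt-\xtilt)}{\hpii-\piit}$ in the usual primal--dual manner so that it collapses to a single end term in $\xt[N]-\xt[N-1]$ minus the consecutive-step residuals $\sumi\hp_i\wtt\inner{\Ai(\xtt-\xttt)}{\piitt-\piit}$. Each residual I would split by Young's inequality into a primal part absorbed by the reservoir $-\Lf\normsq{\xtt-\xttt}$ and a dual part absorbed by $-\tfrac{\wt\taut}{2L_i}\normsq{\piit-\piitt}$; the end term is handled by the remaining last-iterate reservoirs. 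I expect this balancing to be the delicate step, and the definition $\Lf = \max_{p\in P}L_p$ is exactly what guarantees the $\hp$-weighted split coefficients stay nonpositive (since $\hp\in P$), with the prescribed stepsizes calibrated so the ratios $\wt/\wtt$ match. Collecting everything yields \eqref{eq:smooth_Q_conv}.

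Finally, I would derive \eqref{eq:smooth_f_conv} from \eqref{eq:smooth_Q_conv}. Taking $\hx = \xstar$ and $\hpii = \grad\myfi(\xbart[N])$ (admissible, with $\bar x = \xbart[N]\in X$) and using the joint convexity of $Q(\cdot;\hz)$ in its first argument, Jensen's inequality on the $\wt$-weighted average $\zbar^N$ of the iterates gives $(\sumt\wt)\,Q(\zbar^N;\hz) \le \sumt\wt Q(\zt;\hz)\le \Lf\normsq{\xt[0]-\xstar}$. Choosing $(\hp,\hat\pi)$ to be the maximizers at $\xbart[N]$ in Lemma~\ref{lm:sad_ex} then gives $f(\xbart[N]) - f(\xstar)\le Q(\zbar^N;\hz)$, and dividing by $\sumt\wt = N(N+1)/2$ produces the rate $2\Lf\normsq{\xt[0]-\xstar}/N(N+1)$.
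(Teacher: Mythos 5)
Your overall architecture --- decomposing $Q$ into $Q_x+Q_p+Q_\pi$, three-point inequalities for each prox step, telescoping the $x$-terms via $\wt\etat=2\Lf$, killing the dual boundary term via $\taut[1]=0$, reindexing the coupling term, and the final Jensen/weak-duality step for \eqref{eq:smooth_f_conv} --- matches the paper's (which packages the first part as Proposition~\ref{pr:central_Q_conv} plus a verification of its stepsize conditions). However, there is a genuine gap in how you absorb the coupling residuals, and it is exactly the step where the constant $\Lf$ is earned. You keep the dual reservoir in per-component form, $-\sumi\hpi\tfrac{\wt\taut}{2L_i}\normsq{\piit-\piitt}$, from the $1/L_i$-strong convexity of each $\fistar$, and split each residual $\hpi\wtt\inner{\Ai(\xtt-\xttt)}{\piitt-\piit}$ by Young's inequality component by component. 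Matching the dual part to $\hpi\tfrac{\wt\taut}{2L_i}\normsq{\piitt-\piit}$ forces the primal part to carry the coefficient $\hpi\tfrac{\wtt^2L_i}{2\wt\taut}$; summing over $i$ leaves a term proportional to $\sumi\hpi L_i$, which must be dominated by the primal reservoir coefficient $\Lf=\max_{p\in P}L_p$. But $\sumi\hpi L_i$ can strictly exceed $\Lf$ (take the risk-neutral $P$ with $\myfi(x)=\tfrac{L}{2}x_i^2$: then $\sumi\hpi L_i=L$ while $\Lf=L/m$). So your route proves the theorem only with $\Lf$ replaced by $\sup_{p\in P}\sumi p_iL_i$ (or $\max_iL_i$), a strictly weaker statement.

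The missing ingredient is the paper's Lemma~\ref{lm:agg_sm}: because every worker evaluates its gradient at the \emph{same} point $\xundert$, and the reference $\hpii=\grad\myfi(\bar x)$ is likewise a gradient field at a single common $\bar x$, the aggregate $\sumi\hpi\Dfistar(\piit;\piitt)$ is exactly the Bregman distance of $(\sumi\hpi\myfi)^*$ between the aggregated gradients, hence bounded below by $\tfrac{1}{2L_{\hp}}\normsq{\sumi\hpi(\piit-\piitt)}\geq\tfrac{1}{2\Lf}\normsq{\sumi\hpi\Aitr(\piit-\piitt)}$. Young's inequality is then applied once to the \emph{aggregated} inner product $\inner{\sumi\hpi\Aitr(\piitt-\piit)}{\xtt-\xttt}$, and the constants close with $q=\Lf$ in \eqref{eq:aggregate_stx_cvxit_cst}. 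This aggregation step is also the only place the hypothesis that $\hat\pi$ is a gradient field at one point $\bar x$ is used --- a hypothesis your argument never actually invokes, which is a further sign the mechanism is off. Your closing remark that ``$\Lf=\max_{p\in P}L_p$ is exactly what guarantees the split coefficients stay nonpositive'' names the right constant but attaches it to a per-component mechanism that cannot produce it.
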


\begin{theorem}\label{thm:central-sm-str}
Let $\Lf$ be defined in \eqref{eq:smo_cst}. Assume, in addition, that $u(x)$ is $\alpha$-strongly convex for some $\alpha > 0$. Let $\kappa:= \Lf/\alpha$ denote the condition number. If $\{\xt\}_{t=1}^N$ are generated by the DRAO method applied to smooth problems with
\begin{equation}\label{stp:central_sm_str}
\thetat=\theta := \tfrac{\sqrt{4\kappa + 1}-1}{\sqrt{4\kappa + 1}+1},\ \wt=(\tfrac{1}{\theta})^{t-1},\ \taut =\tau:= \tfrac{\sqrt{4\kappa + 1} - 1}{2},\ \etat =\eta:= \tfrac{\alpha(\sqrt{4\kappa + 1} - 1)}{2}.\end{equation}
Then for a reference point $\hz:=(\hx; \hp, \hat \pi)$ in which $\hpii = \grad \myfi(\bar x)$ for some $\bar x\in X$, we have 
\begin{equation}\label{eq:smooth_Q_str_conv}
\sumt \wt Q(\zt; \hz) +  \frac{\alpha(\sqrt{4\kappa + 1} - 1)}{4\theta ^N} \normsq{\xt[N] - \hx} \leq \frac{(\sqrt{4\kappa + 1} - 1)}{4} (\alpha\normsq{\xt[0] - \hx} + \Lf \normsq{x^0 - \bar x}).\end{equation}
In particular, the last iterate $\xt[N]$ converges geometrically:
\begin{equation}\label{eq:smooth_x_str_conv}
\normsq{\xt[N] - \xstar} \leq \theta^N (1 + \kappa) \normsq{\xt[0] - \xstar}, \forall N \geq 1.\end{equation}
\end{theorem}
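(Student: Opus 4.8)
The plan is to run the standard weighted-telescoping analysis for accelerated primal-dual schemes, producing a single per-iteration bound on $Q(\zt;\hz)=Q_\pi(\zt;\hz)+Q_p(\zt;\hz)+Q_x(\zt;\hz)$ that contracts geometrically once summed against the weights $\wt=(1/\theta)^{t-1}$. First I would record three-point inequalities for the two prox steps. For the worker update in Line~3, optimality of $\piit$ combined with the fact that $\fistar$ is $1/L_i$-strongly convex (since $\myfi$ is $L_i$-smooth) yields, for the reference $\hpii$, a bound on $Q_\pi(\zt;\hz)$ in terms of $\taut[V_i(\hpii;\piitt)-V_i(\hpii;\piit)-V_i(\piit;\piitt)]$ and the bilinear term $\sumi\hp_i\inner{A_i\xtilt}{\piit-\hpii}$ evaluated at the extrapolated point. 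For the server update in Line~5, because the inner maximization over $p$ is solved exactly and $u$ is $\alpha$-strongly convex, the three-point inequality for the joint $(x,p)$-prox bounds $Q_x(\zt;\hz)+Q_p(\zt;\hz)$ by $\tfrac{\etat}{2}\big(\normsq{\xtt-\hx}-\normsq{\xt-\hx}-\normsq{\xt-\xtt}\big)-\tfrac{\alpha}{2}\normsq{\xt-\hx}$ together with the coupling $\sumi\hp_i\inner{A_i\xt}{\hpii-\piit}$.

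Adding the three pieces reconstructs $Q(\zt;\hz)$, and the crux is that the bilinear coupling from Line~3 lives at $\xtilt$ while the matching term from the gap lives at $\xt$. I would reconcile them by inserting $\xtilt=\xtt+\thetat(\xtt-\xttt)$ and splitting the residual $\sumi\hp_i\inner{A_i(\xt-\xtilt)}{\piit-\hpii}$ as $\sumi\hp_i\inner{A_i(\xt-\xtt)}{\piit-\hpii}-\thetat\sumi\hp_i\inner{A_i(\xtt-\xttt)}{\piitt-\hpii}$ (a telescoping pair) minus the leftover $\thetat\sumi\hp_i\inner{A_i(\xtt-\xttt)}{\piit-\piitt}$. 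The leftover is controlled by Young's inequality, with its dual half absorbed into $\taut\sumi\hp_i V_i(\piit;\piitt)\ge\tfrac{\taut}{2L_i}\sumi\hp_i\normsq{\piit-\piitt}$ at step $t$ and its primal half absorbed into the unused $\tfrac{\etatt}{2}\normsq{\xtt-\xttt}$ carried over from step $t-1$.

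Multiplying by $\wt$ and summing, a few balance conditions make everything collapse. The weight recursion $\wt\thetat=\wtt$ forces $\wt=(1/\theta)^{t-1}$ and telescopes the bilinear pairs; the primal-contraction relation $\eta(1-\theta)=\alpha\theta$ merges the $\tfrac{\eta}{2}$ and $\tfrac{\alpha}{2}$ coefficients so that the $\normsq{\xt-\hx}$ terms telescope with growing weight, leaving $\tfrac{\wt[N](\eta+\alpha)}{2}=\tfrac{\alpha(\sqrt{4\kappa+1}-1)}{4\theta^N}$ on $\normsq{\xt[N]-\hx}$; and the bilinear-absorption relation $\eta\tau=\theta\Lf$ guarantees feasibility of the Young split. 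Together with the step-size matching $\eta=\alpha\tau$ (tying the dual prox scale to the primal strong convexity), eliminating $\tau$ and $\eta$ reduces these to the quadratic $\kappa\theta^2-(2\kappa+1)\theta+\kappa=0$, whose admissible root is exactly $\theta=(\sqrt{4\kappa+1}-1)/(\sqrt{4\kappa+1}+1)$, and back-substitution recovers the stated $\tau$ and $\eta$. The two terms on the right of \eqref{eq:smooth_Q_str_conv} come from the initial prox distances at $t=1$: the primal term $\tfrac{\eta}{2}\normsq{\xt[0]-\hx}=\tfrac{\alpha(\sqrt{4\kappa+1}-1)}{4}\normsq{\xt[0]-\hx}$, and the dual term $\taut[1]\sumi\hp_i V_i(\hpii;\piit[0])$, which is where the hypothesis $\hpii=\grad\myfi(\bar x)$ enters: by the conjugate Bregman identity $W_{\fistar}(\grad\myfi(\bar x);\grad\myfi(x^0))=W_{\myfi}(x^0;\bar x)\le\tfrac{L_i}{2}\normsq{x^0-\bar x}$, the $\hp$-weighted sum equals $W_{f_{\hp}}(x^0;\bar x)\le\tfrac{\Lf}{2}\normsq{x^0-\bar x}$, producing the $\Lf\normsq{x^0-\bar x}$ term.

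Finally, \eqref{eq:smooth_x_str_conv} follows by specializing $\hz=\zstar$ with $\bar x=\xstar$ (so $\hpii=\grad\myfi(\xstar)$ and $\hx=\xstar$), invoking $Q(\zt;\zstar)\ge0$ from Lemma~\ref{lm:sad_ex} to discard the nonnegative weighted sum, and dividing through by $\tfrac{\alpha(\sqrt{4\kappa+1}-1)}{4\theta^N}$; the ratio $(\alpha+\Lf)/\alpha=1+\kappa$ produces the geometric factor $\theta^N(1+\kappa)$. The main obstacle I anticipate is the telescoping bookkeeping in the third step: verifying that with geometrically increasing weights the Young residual of step $t$ is simultaneously absorbable into the current dual prox term and the previous primal prox term, which is precisely what forces the quadratic for $\theta$. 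A secondary difficulty is keeping the trilinear $p$-coupling under control — unlike the purely bilinear saddle case, the exactly-solved inner $\max_p$ must be handled through the combined $(x,p)$ three-point inequality so that $Q_p$ never needs a separate contraction argument.
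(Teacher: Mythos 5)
Your architecture is the same as the paper's: the paper factors the argument through a general result (Proposition~\ref{pr:central_Q_conv}) whose proof is exactly your sequence of three-point inequalities for the two prox steps, extrapolation telescoping of the bilinear coupling, and Young absorption of the leftover cross term; the theorem itself is then a short specialization. Your balance conditions ($\wt\thetat=\wtt$, $\eta(1-\theta)=\alpha\theta$, $\eta\tau=\theta\Lf$, $\eta=\alpha\tau$), the resulting quadratic $\kappa\theta^2-(2\kappa+1)\theta+\kappa=0$ and its admissible root, the bound $\tau\sumi\hpi\Dfistar(\hpii;\piit[0])\le \tfrac{\tau\Lf}{2}\normsq{x^0-\bar x}$ via the conjugate Bregman identity, and the final step with $\hz=\zstar$, $\bar x=\xstar$ and $Q(\zt;\zstar)\ge 0$ all check out and reproduce the stated constants, including $\tfrac{\wt[N](\eta+\alpha)}{2}=\tfrac{\alpha(\sqrt{4\kappa+1}-1)}{4\theta^N}$ and the factor $1+\kappa$.

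The one step that does not go through as written is the aggregation of dual strong convexity across workers, which is precisely where the constant $\Lf=\max_{p\in P}L_p$ must be produced. The Young residual you need to absorb is $\thetat\inner{\sumi\hpi(\piit-\piitt)}{\xtt-\xttt}$, so the aggregate dual prox term must dominate $\tfrac{1}{2q}\normsq{\sumi\hpi(\piit-\piitt)}$ with $q=\Lf$ (this is condition \eqref{eq:aggregate_stx_cvxit_cst}; $q$ feeds directly into $\eta\tau\ge\theta q$ and hence into your quadratic for $\theta$). Your justification --- componentwise $1/L_i$-strong convexity of $\fistar$ followed by summation --- only yields, after Cauchy--Schwarz, $q=\sumi\hpi L_i$, which can strictly exceed $\Lf$ when $P$ does not contain the simplex vertices: for two coordinate-separated quadratics with $P=\{(1/2,1/2)\}$ one has $\Lf=L/2$ but $\sumi\hpi L_i=L$. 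The missing ingredient is Lemma~\ref{lm:agg_sm}: because every worker evaluates its gradient at the \emph{common} point $\xundert$, one has $\sumi\hpi\piit=\grad(\sumi\hpi f_i)(\xundert)$, so the weighted sum of Bregman divergences $\sumi\hpi\Dfistar$ collapses into the Bregman divergence of $(\sumi\hpi f_i)^*$ evaluated at aggregate gradients, and that conjugate is $1/L_{\hp}$-strongly convex with $L_{\hp}\le\Lf$. Without this observation your argument proves the theorem only with $\Lf$ replaced by the generally larger $\max_{p\in P}\sumi p_i L_i$, i.e., with a worse condition number in $\theta$.
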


We make two remarks regarding the above convergence results. First, selecting the saddle point $\zstar$ defined in Lemma \ref{lm:sad_ex} as $\hat z$, Theorem \ref{thm:central-sm} (c.f. \eqref{eq:smooth_Q_conv}) and \ref{thm:central-sm-str} (c.f. \eqref{eq:smooth_Q_str_conv}) imply that all generated iterates, $\{\xt\}_{t\geq 1}$, are inside some ball around $\xstar$:
\begin{align*}
 &\norm{\xt - \xstar} \leq \norm{x^0 - \xstar} \quad \quad \quad \quad \quad \text{ if } \alpha = 0,\\
 &\norm{\xt - \xstar} \leq (1 + \Lf/\alpha)\norm{x^0 - \xstar} \text{ if } \alpha > 0.
 \end{align*} 
 This shows that the search space for $\xt$ is essentially bounded. Such a property will become useful when we solve the saddle point subproblem in Line 5 of DRAO approximately in the next section. 
 Second, Theorem \ref{thm:central-sm} and \ref{thm:central-sm-str} imply, respectively, $\bigO(\sqrt{\Lf}\norm{\xt[0] - \xstar}/\sqrt{\ep})$ and $\bigO(\sqrt{\Lf/\alpha}\log(1/\ep))$ communication complexities to find $\ep$-optimal solutions. It is interesting to note that with $\Lf$ defined in \eqref{eq:smo_cst}, these results are valid even if $P$ is larger than the probability simplex, i.e., $\Delta_+^m  \subsetneq P \subset \R^m_+$, which could be useful if the risk measure $\rho$ is not positive homogeneous. 
 We will show later in Section 5 that these communication complexity bounds are not improvable in general.


\vgap

Next, let us consider the structured non-smooth problem. Because $\myfi^*$ may not be strongly convex, the Bregman distance function $\Dfistar$ (c.f. \eqref{eq:dual_bregman}) is no longer suitable for $\pii$ prox update. Instead, we choose
$\Vi(\pii; \bpii):= \tfrac{1}{2} \normsq{\pii - \bpii},$
so that the $\pii$ proximal update is given by:
\begin{equation}\label{eq:drao_ns_pi_dualprox}
\piit \leftarrow \argmax_{\pii \in \Pii} \inner{A_i\xtilt}{\pii} - \fistar(\pii) - \tfrac{\taut}{2} \normsq{\pii-  \piitt}.\end{equation}
Theorem~\ref{thm:cen-non-smooth} below states the convergence properties of Algorithm \ref{alg:cPD} applied to the structured nonsmooth problem 
and its proof is provided in Section~\ref{sec:ConvergenceAnalysis}. We need to define the maximum linear operator norm $\MA$ and the maximum dual radius $\DPi$  as 
\begin{equation}\label{eq:MA_def}
\MA := \max_{i \in [m]} \norm{\Ai}[2,2], \ \DPi:= \max_{i \in [m]} \max_{\pii, \piibar \in \Pii} \norm{\pii - \piibar}.
\end{equation}
Note that $\MA\DPi$ provides an estimate of the Lipschitz continuity constant of $\tsum_{i} p_i f_i(x).$

\begin{theorem}\label{thm:cen-non-smooth}
Let a structured non-smooth risk-averse problem \eqref{eq:orig_prob} be given. Let  $\MA$ and $\DPi$ be defined above in \eqref{eq:MA_def} and let $\Rx\geq \norm{\xt[0] - \xstar}$.  \\
a) If $\alpha=0$ and the stepsizes satisfy
$$\wt=1,\ \thetat=1, \etat = \MA \DPi /  \Rx, \taut = \MA \Rx / \DPi,$$
 the following convergence rate holds for the solution $\xbarN$ returned by the DRAO algorithm 
\begin{equation}\label{eq:nonsmooth_f_conv}
f(\xbart[N]) - f(\xstar) \leq \MA\DPi \Rx/N.\end{equation}
b) If $\alpha>0$ and the stepsizes satisfy
$$\wt=t,\ \thetat=(t-1)/t, \etat = t\alpha/3, \taut = 3\MA^2 / t\alpha,$$
 the following convergence rate holds for the solution $\xbarN$ returned by the DRAO algorithm 
\begin{equation}\label{eq:nonsmooth_f_conv_str}
f(\xbart[N]) - f(\xstar) \leq \left(\alpha \normsq{\xt[0] - \xstar}/3 + 3 \MA^2 \DPi^2/\alpha\right)/N^2.\end{equation}
\end{theorem}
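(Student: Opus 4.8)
The plan is to bound the weighted average of the gap function $Q$ along the trajectory and then invoke Lemma~\ref{lm:sad_ex}, so I first set the reference point to $(\xstar;\hp,\hat\pi)$ and write $W:=\sumt\wt$. For any $(\hp,\hat\pi)$, weak duality (Lemma~\ref{lm:duality}(a)) gives $\La(\xstar;\pt,\piit)\le f(\xstar)$, and convexity of $\La(\cdot;\hp,\hat\pi)$ in $x$ gives $\sumt\wt\La(\xt;\hp,\hat\pi)\ge W\La(\xbart[N];\hp,\hat\pi)$; together these yield $\La(\xbart[N];\hp,\hat\pi)-f(\xstar)\le\tfrac1W\sumt\wt Q(\zt;(\xstar;\hp,\hat\pi))$. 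Maximizing over $(\hp,\hat\pi)$ and using strong duality (Lemma~\ref{lm:duality}(b)) to identify $\max_{\hp,\hat\pi}\La(\xbart[N];\hp,\hat\pi)=f(\xbart[N])$ reduces both parts to a single uniform estimate on $\sumt\wt Q(\zt;(\xstar;\hp,\hat\pi))$, which I attack through the decomposition $Q=Q_{\pi}+Q_p+Q_x$ of \eqref{eq:Q-decomp}.

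Each summand is controlled by the optimality condition of the corresponding update. Since the Line-5 update is by construction a saddle point in $p$, one gets $Q_p(\zt;\hz)\le0$ for every $\hp\in P$ for free. Its $x$-component is a prox step whose objective is $(\etat+\alpha)$-strongly convex, giving $Q_x(\zt;\hz)\le\tfrac{\etat}{2}\normsq{\hx-\xtt}-\tfrac{\etat}{2}\normsq{\xt-\xtt}-\tfrac{\etat+\alpha}{2}\normsq{\hx-\xt}$. The $\pii$-update \eqref{eq:drao_ns_pi_dualprox} with $V_i=\tfrac12\normsq{\cdot}$ yields $\inner{A_i\xtilt}{\hpii-\piit}-\fistar(\hpii)+\fistar(\piit)\le\tfrac{\taut}{2}\left(\normsq{\hpii-\piitt}-\normsq{\piit-\piitt}-\normsq{\hpii-\piit}\right)$. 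The catch is that $Q_{\pi}$ is evaluated at $\xt$ while the prox step uses the extrapolated $\xtilt$; substituting leaves the bilinear coupling $\sumi\hp_i\inner{A_i(\xt-\xtilt)}{\hpii-\piit}$, which is the heart of the argument.

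The main work is to tame this coupling. I would insert $\xt-\xtilt=(\xt-\xtt)-\thetat(\xtt-\xt[t-2])$, form the weighted sum, and use $\wt\thetat=\wt[t-1]$ (which holds for the part-(b) stepsizes, and trivially for the constant $\wt\equiv1,\thetat\equiv1$ of part (a)) to reindex the two halves against each other. This telescopes the coupling into $\sum_{t<N}\wt\sumi\hp_i\inner{A_i(\xt-\xtt)}{\piit[t+1]-\piit}$ plus one boundary term $\wt[N]\sumi\hp_i\inner{A_i(\xt[N]-\xt[N-1])}{\hpii-\piit[N]}$; the $t=1$ boundary vanishes because the initialization $\xt[0]=\xt[-1]$ forces $\xtilt[1]=\xt[0]$. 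Each inner product is split by Young's inequality as $\MA\norm{\xt-\xtt}\,\norm{\cdot}\le\tfrac{\etat}{2}\normsq{\xt-\xtt}+\tfrac{\MA^2}{2\etat}\normsq{\cdot}$; the $x$-parts are absorbed exactly by the $-\tfrac{\etat}{2}\normsq{\xt-\xtt}$ terms from $Q_x$, and the $\pi$-parts by the $-\tfrac{\taut[t+1]}{2}\normsq{\piit[t+1]-\piit}$ terms supplied by the next iteration's $Q_{\pi}$ bound (the boundary piece by $-\tfrac{\taut[N]}{2}\normsq{\hpii-\piit[N]}$). These absorptions close precisely because the prescribed stepsizes satisfy $\etat\taut=\MA^2$ together with $\wt\MA^2/\etat=\wt[t+1]\taut[t+1]$; verifying this exact balance is the step I expect to be the main obstacle, since it is where the constants in the stepsize rules are forced.

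What survives is a pair of telescoping quadratics. The $x$-terms collapse, using $\wt(\etat+\alpha)\ge\wt[t+1]\etat[t+1]$ (valid for all $t\ge1$ under both schedules), to at most $\tfrac{\wt[1]\etat[1]}{2}\normsq{\xstar-\xt[0]}\le\tfrac{\wt[1]\etat[1]}{2}\Rx^2$; the $\pi$-terms collapse, using that $\wt\taut$ is constant in $t$, to at most $\tfrac{\wt[1]\taut[1]}{2}\sumi\hp_i\normsq{\hpii-\piit[0]}\le\tfrac{\wt[1]\taut[1]}{2}\DPi^2$, which is uniform in $(\hp,\hat\pi)$ as required. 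Substituting the two schedules and dividing by $W$ finishes both claims: for part (a), $\wt\equiv1$, $\etat\equiv\MA\DPi/\Rx$, $\taut\equiv\MA\Rx/\DPi$, $W=N$ give $\MA\DPi\Rx/N$, matching \eqref{eq:nonsmooth_f_conv}; for part (b), $\wt[1]\etat[1]=\alpha/3$, $\wt[1]\taut[1]=3\MA^2/\alpha$, and $W=N(N+1)/2\ge N^2/2$ give $\left(\alpha\normsq{\xt[0]-\xstar}/3+3\MA^2\DPi^2/\alpha\right)/N^2$, matching \eqref{eq:nonsmooth_f_conv_str}.
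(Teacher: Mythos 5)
Your proposal is correct and follows essentially the same route as the paper: the paper proves this theorem by invoking the generic bound of Proposition~\ref{pr:central_Q_conv} (whose proof is exactly your $Q_\pi+Q_p+Q_x$ decomposition, the three-point inequalities, the reindexed telescoping of the bilinear coupling, and the Young/stepsize balance with $q=\MA^2$, $\mu=0$) and then applies the same duality argument with the reference point $(\xstar;\hat p^N,\hat\pi^N)$. The only cosmetic difference is that you inline the proposition's proof and verify the stepsize identities $\etat\taut=\MA^2$ and the monotonicity of $\wt\etat$, $\wt\taut$ directly rather than citing conditions \eqref{req:DRAO}.
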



The preceding theorem gives us $\bigO(\Rx\DPi \MA/\ep)$ and $\bigO(\MA\DPi/\sqrt{\ep \alpha})$\footnote{We assume the strong convexity modulus $\alpha$ to be small such that the $\MA^2\DPi^2/\alpha$ term dominates in \eqref{eq:nonsmooth_f_conv_str}.} communication complexities for solving the structured nonsmooth problem under the non-strongly convex and the strongly convex settings, respectively.  These complexity bounds are
worse than those of the smooth problem by an order of magnitude. It is interesting to note that
the smoothness properties of the scenario cost functions have a significant impact on communication complexity,
even under the assumption that the workers are equipped with the capability to solve the $\pii$ proximal update in \eqref{eq:drao_ns_pi_dualprox}.

    \subsection{Convergence analysis} \label{sec:ConvergenceAnalysis}

Our main goal in this subsection is to establish the convergence rates associated with the DRAO method stated in Theorems~\ref{thm:central-sm}, \ref{thm:central-sm-str}, and \ref{thm:cen-non-smooth}.

\vgap

We will first show some general convergence properties about the generic DRAO method in Algorithm~\ref{alg:cPD}. Since this result holds regardless of the strong convexity of $\fistar$ (i.e., $\mu = 0$ is allowed in \eqref{eq:strong_convex_dual}) and the strong convexity of $u$ (i.e., $\alpha=0$ is allowed), it will be applied to both smooth and nonsmooth problems under either convex or strongly convex settings.

\begin{proposition}\label{pr:central_Q_conv}
Let $\{\zt \equiv (\xt; \pt,\pi^t) \}_{t=1}^N$ be generated by Algorithm~\ref{alg:cPD} for some $\pt\in \argmax_{p \in P} \sumi p_i (\inner{\xt}{\piit} - \fistar(\piit)) - \rho^*(p)$. Fix a reference point  $\hz:=(\hx; \hp, \hat \pi) \in X \times P \times \Pi$ (c.f. \eqref{eq:Q_func}). Assume $\mu$ is a non-negative constant satisfying
\begin{equation} \label{eq:strong_convex_dual}
\fistar(\pii) - \fistar(\bpii)- \inner{\gi'(\bpii)}{\pii - \bpii} \geq \mu \Vi(\pii; \bpii),\ \forall \pii, \bpii \in \Pii, \forall i \in [m].
\end{equation}
If there exists a positive constant $q$ satisfying
\begin{equation}\label{eq:aggregate_stx_cvxit_cst}
\begin{split}
&\tsum_{i=1}^{m} p_i V_i(\pii^t; \pii^{t-1}) \geq \tfrac{1}{2q} \normsq{\sumi p_i \Aitr(\piit - \piitt)}, \forall t \geq 2,  \forall p \in P,\\
&\tsum_{i=1}^{m} p_i V_i(\hpii; \pii^{N}) \geq \tfrac{1}{2q} \normsq{\sumi p_i \Aitr (\hpii - \pii^{N})},  \forall p \in P,
\end{split}
\end{equation}
and the stepsizes satisfy the following conditions for all $t \geq 2:$
\begin{align}\label{req:DRAO}
\begin{split}
&\wtt=\wt\thetat,\\
&\etatt \taut \geq \thetat q,\ (\taut[N] + \mu) \etat[N] \geq q,\\
&\wt \etat \leq\wtt (\etatt + \alpha),\  \wt\taut \leq \wtt (\tautt  + \mu),
\end{split}
\end{align}
then the next bound is valid for all $\hz:=(\hx; \hp, \hat \pi) \in X \times P \times \Pi$ and $N \geq 1$:
\begin{equation}\label{eq:central_Q}
\sumt \wt Q(\zt; \hz) + \tfrac{\wt[N] (\etat[N]+ \alpha)}{2} \normsq{\xt[N] - \hx} \leq \tfrac{\wt[1] \etat[1]}{2} \normsq{\xt[0] - \hx} + \wt[1]\taut[1]\sumi \hpi \Vi(\hpii; \piit[0]).
\end{equation}
\end{proposition}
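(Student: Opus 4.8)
The plan is to bound the three sub-gap functions $Q_\pi$, $Q_p$, $Q_x$ from the decomposition \eqref{eq:Q-decomp} separately at each iteration $t$, using the optimality conditions of the three prox updates in Algorithm~\ref{alg:cPD}, and then to combine them into a single $\wt$-weighted telescoping sum. Two of the three bounds are immediate. For $Q_p$: since $\pt$ is the exact maximizer of the inner problem in Line~5 (no Bregman regularization on $p$), the quantity $Q_p(\zt;\hz)$ is precisely the negative of the optimality gap of that maximization evaluated against $\hp$ (recall $\vit=\Aitr\piit$), hence $Q_p(\zt;\hz)\le 0$. For $Q_x$: the point $\xt$ minimizes the $\alpha$-strongly convex objective $\inner{\sumi p_i^t \vit}{x}+u(x)$ plus the prox term $\tfrac{\etat}{2}\normsq{x-\xtt}$ over $X$, so the three-point inequality for this $(\etat+\alpha)$-strongly convex problem gives
\[
Q_x(\zt;\hz)\le \tfrac{\etat}{2}\normsq{\hx - \xtt} - \tfrac{\etat+\alpha}{2}\normsq{\hx - \xt} - \tfrac{\etat}{2}\normsq{\xt - \xtt}.
\]

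For $Q_\pi$, the $\pii$-update in Line~3 maximizes $\inner{\Ai\xtilt}{\pii}-\fistar(\pii)$ regularized by $\taut\Vi(\pii;\piitt)$, where $\fistar$ is $\mu$-strongly convex relative to $\Vi$ by \eqref{eq:strong_convex_dual}. Its three-point inequality, after weighting by $\hpi$ and summing over $i$, bounds $\sumi\hpi[\inner{\Ai\xtilt}{\hpii-\piit}-\fistar(\hpii)+\fistar(\piit)]$ from above by $\sumi\hpi\taut[\Vi(\hpii;\piitt)-\Vi(\hpii;\piit)-\Vi(\piit;\piitt)]-\mu\sumi\hpi\Vi(\hpii;\piit)$. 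Since $Q_\pi(\zt;\hz)$ is instead evaluated at $\xt$, the discrepancy is the coupling term $C_t:=\sumi\hpi\inner{\Ai(\xt-\xtilt)}{\hpii-\piit}$, which must be added back to this bound.

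The core of the argument is the treatment of $C_t$. Substituting $\xtilt=\xtt+\thetat(\xtt-\xttt)$ and writing $\hpii-\piit=(\hpii-\piitt)-(\piit-\piitt)$, one splits $C_t=U_t-\thetat U_{t-1}+\thetat\inner{\xtt-\xttt}{\sumi\hpi\Aitr(\piit-\piitt)}$, where $U_t:=\inner{\xt-\xtt}{\sumi\hpi\Aitr(\hpii-\piit)}$. Multiplying by $\wt$ and summing, the identity $\wtt=\wt\thetat$ collapses the chain $\sumt\wt(U_t-\thetat U_{t-1})$ to the single boundary term $\wt[N]U_N$ (the $t=1$ term being consistent since $\xtilt[1]=\xt[0]$), leaving the residuals $\sumt\wtt\inner{\xtt-\xttt}{\sumi\hpi\Aitr(\piit-\piitt)}$.

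Finally I assemble the three $\wt$-weighted bounds. The $\normsq{\hx-\cdot}$ terms telescope under $\wt\etat\le\wtt(\etatt+\alpha)$ down to $\tfrac{\wt[1]\etat[1]}{2}\normsq{\xt[0]-\hx}-\tfrac{\wt[N](\etat[N]+\alpha)}{2}\normsq{\xt[N]-\hx}$, and the $\Vi(\hpii;\cdot)$ terms telescope under $\wt\taut\le\wtt(\tautt+\mu)$ down to $\wt[1]\taut[1]\sumi\hpi\Vi(\hpii;\piit[0])$ minus a discarded $\wt[N](\taut[N]+\mu)\sumi\hpi\Vi(\hpii;\piit[N])$; this reproduces the right-hand side of \eqref{eq:central_Q}. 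What remains is to show the leftover negative quantities $-\wt\tfrac{\etat}{2}\normsq{\xt-\xtt}$ and $-\wt\taut\sumi\hpi\Vi(\piit;\piitt)$ absorb both the residual cross terms and $\wt[N]U_N$. I expect this absorption to be the main obstacle: applying Young's inequality to the residual at $t$ with weight $c$ splits it into a $\tfrac{1}{2c}$-multiple of $\normsq{\xtt-\xttt}$ and a $\tfrac{c}{2}$-multiple of $\normsq{\sumi\hpi\Aitr(\piit-\piitt)}$, the latter controlled by $2q\sumi\hpi\Vi(\piit;\piitt)$ via \eqref{eq:aggregate_stx_cvxit_cst}. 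Absorption requires $\tfrac1c\le\etatt$ and $cq\le\taut/\thetat$ simultaneously, feasible exactly when $\etatt\taut\ge\thetat q$; the analogous bound on $\wt[N]U_N$ uses the second inequality in \eqref{eq:aggregate_stx_cvxit_cst} and is feasible exactly when $(\taut[N]+\mu)\etat[N]\ge q$. These are precisely the middle stepsize conditions of \eqref{req:DRAO}, so the bookkeeping of matching the Young's weights to the available negative terms is where all the stepsize requirements are consumed, after which \eqref{eq:central_Q} follows upon discarding the remaining nonpositive terms.
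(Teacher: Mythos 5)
Your proposal is correct and follows essentially the same route as the paper's proof: the same $Q_\pi$/$Q_p$/$Q_x$ decomposition, the same three-point inequalities for the prox updates, the same telescoping via $\wtt=\wt\thetat$ and the monotonicity conditions on $\wt\etat$ and $\wt\taut$, and the same Young's-inequality absorption of the cross terms using \eqref{eq:aggregate_stx_cvxit_cst} together with $\etatt\taut\ge\thetat q$ and $(\taut[N]+\mu)\etat[N]\ge q$. The only cosmetic slip is calling the terminal term $\wt[N](\taut[N]+\mu)\sumi\hpi\Vi(\hpii;\piit[N])$ ``discarded'' before correctly using it to absorb $\wt[N]U_N$.
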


\begin{proof}
Let $Q_\pi$, $Q_p$ and $Q_x$ be defined in \eqref{eq:Q-decomp}.
We begin by analyzing the convergence of $\Q[\pi].$ It follows from the definition of $\xtilt$ that
\begin{align*}
\inner{\hpii -\piit }{A_i(\xtilt-\xt)}=& -\inner{\hpii - \piit}{A_i(\xt -\xtt)} + \thetat \inner{\hpii -\piitt}{A_i(\xtt -\xt[t-2])}\\
&+\thetat \inner{\piitt -\piit}{A_i(\xtt -\xt[t-2])}.
\end{align*}
The optimality condition for the dual update in Line 3
of Algorithm~\ref{alg:cPD} (see Lemma 3.1 of \cite{LanBook}) implies
\begin{align*}
\inner{\hpii - \piit}{A_i \xt}& + \fistar(\piit) - \fistar(\hpii) + \inner{\hpii -\piit }{A_i(\xtilt-\xt)} \\
&\leq \taut \Vi(\hpii; \piitt) - (\taut + \mu) \Vi(\hpii; \piit) - \taut \Vi(\piit; \piitt).
\end{align*}
So, combining the above two relations,
taking the $\wt$ weighted sum of the resulting inequalities and using the conditions that $\wtt=\wt\thetat$ and $\wt\taut \leq \wtt (\tautt  + \mu)$, we obtain 
\begin{align*}
\sumt \wt &(\inner{\hpii - \piit}{A_i \xt} + \fistar(\piit) - \fistar(\hpii)) \\
\leq& - (\wt[N](\taut[N] + \mu) \Vi(\hpii; \piit[N]) -\wt[N]\inner{\hpii - \piit[N]}{A_i (\xt[N] -\xt[N-1])})\\
& -\tsum_{t=2}^{N} [\wt \taut \Vi(\piit; \piitt) + \wtt \inner{\piitt -\piit}{A_i(\xtt -\xt[t-2])} ]\\
&+ \wt[1]\taut[1] \Vi(\hpii; \piit[0]).
\end{align*}
A $\hpi$-weighted sum of the above inequality leads to the desired $Q_\pi$ convergence bound given by
\begin{align}\label{eq:central_Qpii_bound}
\begin{split}
\sumt \wt& Q_\pi(\zt;\hz) \\
\leq& - (\wt[N](\taut[N] + \mu) \sumi \hpi \Vi(\hpii; \piit[N]) -\wt[N]\inner{\sumi \hpi \Aitr(\hpii - \piit[N])}{\xt[N] -\xt[N-1]})\\
& -\tsum_{t=2}^N [\wt \taut\sumi \hpi \Vi(\piit; \piitt) + \wtt \inner{\sumi \hpi \Aitr(\piitt -\piit)}{\xtt -\xt[t-2]} ]\\
&+ \wt[1]\taut[1] \sumi \hp_i \Vi(\hpii; \piit[0]) .
\end{split}
\end{align}

Next, we consider $\xt$ and $\pt$ generated by Line 5 in Algorithm \ref{alg:cPD}. Let
$F(x; \pi^t):= \max_{p \in P}\sumi p_i [\inner{x}{\vit} - \fistar(\piit)] -\rhostar(p) + u(x) + \tfrac{\eta}{2} \normsq{x - \xtt} .$
Since $\xt \in \argmin_{x \in X} F(x; \pi^t)$, the first-order necessity condition implies the existence of some maximizer $\pt$ and some subgradient $u'(\xt) \in  \partial u(\xt) $ such that 
\begin{equation*}
\sumi \pt_i \vit + \eta (\xt - \xtt) + u'(\xt) \in -\partial \delta_X(\xt) \Rightarrow  \inner{\sumi \pt_i \Aitr \piit}{ \xt - \hx} + \inner{\etat (\xt - \xtt) + u'(\xt)}{\xt- \hx} \leq 0. 
\end{equation*}
Since $\alpha$-strong convexity of $u$ implies that $u(\xt)  + \alpha \|\xt - \hx\|^2/2 - u(\hx) \leq \inner{u'(\xt)}{\xt- \hx}$,  and $\tfrac{\etat}{2} \normsq{\xt - \hx} + \tfrac{\etat}{2} \normsq{\xt - \xtt} - \tfrac{\etat}{2} \normsq{\xtt - \hx}  = \inner{\etat (\xt - \xtt)}{\xt- \hx}$ , we get
\begin{equation}\label{pr1:Qx}
\Q[x](\zt;\hz) + \tfrac{\etat + \alpha}{2} \normsq{\xt - \hx} \leq \tfrac{\etat}{2} \normsq{\xtt - \hx} - \tfrac{\etat}{2} \normsq{\xt - \xtt}.
\end{equation}
Additionally, being a maximizer, $\pt$ satisfies 
\begin{align}
\pt \in& \argmax_{p \in P} \sumi p_i (\inner{\xt}{A_i\piit} - \fistar(\piit))  - \rhostar(p) \notag\\
&\Rightarrow \sumi (\hpi - p^t) (\inner{\xt}{A_i \piit} - \fistar(\piit)) + \rhostar(\pt) - \rhostar(\hp)\leq 0 \Rightarrow Q_p(\zt;\hz) \leq 0, \label{pr1:Qp}
\end{align}
So, combining  \eqref{pr1:Qp} and \eqref{pr1:Qx}, taking a $\wt$-weighted sum of the resulting inequality and using $\wt\etat \leq \wtt (\etatt + \alpha)$, we obtain
\begin{equation}\label{eq:central_Qx_Qp}
\sumt \wt (\Q[x](\zt; \hz)+\Q[p](\zt; \hz)) + \tfrac{\wt[N](\etat[N] + \alpha)}{2} \normsq{\xt[N] - \hx} \leq \tfrac{\wt[1]\etat[1]}{2} \normsq{\xt[0] - \hx} - \sumt \tfrac{\wt\etat}{2} \normsq{\xt - \xtt}.
\end{equation}
Then utilizing \eqref{eq:aggregate_stx_cvxit_cst} and the Young's inequality, \eqref{eq:central_Q} follows immediately by adding \eqref{eq:central_Qx_Qp} to \eqref{eq:central_Qpii_bound}.
\end{proof}
\vgap

We now apply the result in Proposition~\ref{pr:central_Q_conv} to the smooth problem.
 Observe that the gradient evaluation point $\xundert$ is common for all workers. This allows us to easily characterize the strong convexity modulus of the aggregate prox-penalty function $\sumi \hpi \Dfistar(\cdot;\cdot)$ (c.f. \eqref{eq:aggregate_stx_cvxit_cst}) in the next lemma. 
\begin{lemma}\label{lm:agg_sm}
Let $\hp\in P$ be given. If $\pii = \grad \myfi(x) $ and $\bpii = \grad \myfi(\xbar)$ for some $x$ and $\xbar$, then 
\begin{equation}
\sumi \hpi \Dfistar(\pii;\bpii) \geq \tfrac{1}{2\Lf}\normsq{\sumi \hpi \Aitr (\pii - \bpii)}.
\end{equation}
\end{lemma}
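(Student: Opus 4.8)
The plan is to reduce the left-hand side to a primal Bregman distance of the aggregated objective $f_{\hp} := \sumi \hpi \myfi$ and then invoke the standard co-coercivity inequality for smooth convex functions.

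First I would translate the dual Bregman distance $\Dfistar(\pii;\bpii)$ into a primal quantity via Fenchel--Young duality. Since $\myfi$ is smooth and convex, the equality $\fistar(\grad \myfi(y)) = \inner{\grad \myfi(y)}{y} - \myfi(y)$ holds for every $y$, and $y$ is a (sub)gradient of $\fistar$ at $\grad \myfi(y)$, so I may take $\fistarp(\pii)=x$ and $\fistarp(\bpii) = \xbar$. Substituting these identities, together with $\pii = \grad \myfi(x)$ and $\bpii = \grad \myfi(\xbar)$, into the definition \eqref{eq:dual_bregman} and cancelling the inner-product terms, I expect the dual Bregman distance to collapse into the primal Bregman distance of $\myfi$ with its arguments reversed:
\begin{equation*}
\Dfistar(\pii;\bpii) = \myfi(\xbar) - \myfi(x) - \inner{\grad \myfi(x)}{\xbar - x}.
\end{equation*}

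Next I would take the $\hpi$-weighted sum over $i$. Because the evaluation points $x$ and $\xbar$ are common to all workers, the right-hand side aggregates into the single primal Bregman distance of $f_{\hp} = \sumi \hpi \myfi$ between $\xbar$ and $x$, namely $f_{\hp}(\xbar) - f_{\hp}(x) - \inner{\grad f_{\hp}(x)}{\xbar - x}$; note $f_{\hp}$ is convex and smooth since $\hp \in P \subseteq \Delta^m_+$. This is exactly where the shared gradient evaluation point is exploited, as flagged in the paragraph preceding the lemma. Finally I would apply the co-coercivity bound: for any convex $g$ with $L_g$-Lipschitz gradient, $g(\xbar) - g(x) - \inner{\grad g(x)}{\xbar - x} \geq \tfrac{1}{2L_g}\normsq{\grad g(\xbar) - \grad g(x)}$. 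Taking $g = f_{\hp}$, whose gradient Lipschitz constant satisfies $L_{\hp} \leq \Lf$ by \eqref{eq:smo_cst}, and using $\grad f_{\hp}(\xbar) - \grad f_{\hp}(x) = \sumi \hpi(\bpii - \pii)$ together with $A_i = I$ in the smooth case, yields the claim after replacing $1/(2L_{\hp})$ by the smaller $1/(2\Lf)$.

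The only delicate point is the first step: justifying that the dual Bregman distance equals the reversed primal Bregman distance, which hinges on the Fenchel--Young equality and on $x$ (resp. $\xbar$) being a valid value of $\fistarp$ at $\pii$ (resp. $\bpii$). This is guaranteed because smoothness of $\myfi$ makes $\fistar$ strictly convex, so its subdifferential is single-valued wherever nonempty; the remainder is routine algebraic cancellation and a standard smoothness inequality.
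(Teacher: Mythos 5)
Your proof is correct, but it travels through the primal space where the paper stays in the dual. The paper aggregates first in the dual: using Fenchel--Young at the common evaluation points it shows $\sumi \hpi \fistar(\pii) = (\myfi[\hp])^*\left(\sumi \hpi \pii\right)$ and likewise for $\bpii$, so that the weighted sum of dual Bregman distances equals the Bregman distance of the \emph{conjugate of the aggregate function} $W_{(\myfi[\hp])^*}$ evaluated at $\sumi\hpi\pii$ and $\sumi\hpi\bpii$; the bound then follows because $(\myfi[\hp])^*$ is $1/\Lf$-strongly convex. You instead convert each individual dual Bregman distance into the reversed primal one, $\Dfistar(\pii;\bpii) = \myfi(\xbar) - \myfi(x) - \inner{\grad\myfi(x)}{\xbar - x}$, aggregate in the primal to get $W_{\myfi[\hp]}(\xbar;x)$, and finish with co-coercivity of $\grad \myfi[\hp]$. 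The two routes are dual to one another (co-coercivity of an $L$-smooth convex function is exactly $1/L$-strong convexity of its conjugate), and both exploit the same essential structure, namely that the shared gradient evaluation points let the weighted sum collapse onto a single aggregate function. Your version is arguably more elementary since it invokes only the textbook co-coercivity inequality and never reasons about the conjugate of the aggregate. One small caveat: your justification that $\fistarp(\bpii)=\xbar$ via ``strict convexity of $\fistar$ implies single-valued subdifferential'' conflates strict convexity with differentiability; the correct (and sufficient) statement is simply that $\xbar \in \partial\fistar(\bpii)$ by the Fenchel--Young equality conditions, so $\xbar$ is a valid selection for $\fistarp(\bpii)$ in \eqref{eq:dual_bregman} --- the same implicit selection the paper makes when it writes $\grad\fistar(\bpii)=\xbar$.
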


\begin{proof}
Let $\myfi[\hp](x):=(\sumi \hpi \myfi(x)).$ Then by the definition of $\Lf$ in \eqref{eq:smo_cst}, $\myfi[\hp]$ is $\Lf$-smooth and its conjugate $(\myfi[\hp])^*$ is $1/\Lf$ strongly convex. 
Next, we relate  $W_{(\myfi[\hp])^*}$ to $\sumi \hpi \Dfistar$ to calculate its strong convexity modulus. Since 
$\sumi \hpi \pii = \grad \myfi[\hp](x)$ and $ \sumi \hpi \bpii = \grad \myfi[\hp](\xbar),$
we have by Fenchel duality that
\begin{align*}&\sumi \hpi \inner{\pii -\bpii}{\grad \fistar(\bpii)} =  \inner{\sumi \hpi (\pii -\bpii)}{\xbar} = \inner{\sumi \hpi (\pii -\bpii)}{\grad (\myfi[\hp])^*(\sumi \hpi \bpii) },\\
&\sumi \hpi \fistar(\pii) = \sumi \hpi (\inner{\pii}{x} - \myfi(x)) = \inner{\sumi \hpi\pii}{x} - (\sumi \hpi \myfi)(x) = (\myfi[\hp])^*(\sumi \hpi\pii),
\end{align*}
and, similarly, $\sumi \hpi \fistar(\bpii) =  (\myfi[\hp])^*(\sumi \hpi\bpii).$
Thus 
\begin{align*}
\sumi \hpi &\Dfistar(\pii;\bpii) = (\myfi[\hp])^*(\sumi \hpi\pii) -  (\myfi[\hp])^*(\sumi \hpi\bpii) - \inner{\sumi \hpi (\pii -\bpii)}{\grad (\myfi[\hp])^*(\sumi \hpi \bpii) }\\
&= W_{(\myfi[\hp])^*}(\sumi \hpi\pii;\sumi \hpi\bpii) \geq \tfrac{1}{2\Lf}\normsq{\sumi \hpi(\pii - \bpii)}=\tfrac{1}{2\Lf}\normsq{\sumi \hpi\Aitr(\pii - \bpii)},\end{align*}
where the last inequality follows from $\Aitr = I$ in smooth problems. 
\end{proof}

\vgap

We are now ready to prove Theorems~\ref{thm:central-sm} and \ref{thm:central-sm-str}.

\vgap

\noindent{\bf Proof of Theorem~\ref{thm:central-sm}:}
We apply Proposition \ref{pr:central_Q_conv} to obtain the convergence result in \eqref{eq:smooth_f_conv}.
Since $\fistar$ is $1$-strongly convex with respect to $\Dfistar$, $\mu=1$ satisfies condition \eqref{eq:strong_convex_dual}. Since $\piit=\grad f_i(\xundert)$ and $\hpii = \grad f_i(\bar x)$ for some $\bar x$, $q = L_f$ satisfies condition \eqref{eq:aggregate_stx_cvxit_cst} (c.f. Lemma \ref{lm:agg_sm}). Moreover, since stepsizes proposed in Theorem \ref{thm:central-sm} verifies \eqref{req:DRAO}, all the requirements in Proposition \ref{pr:central_Q_conv} are met. Thus Proposition \ref{pr:central_Q_conv} leads to \eqref{eq:smooth_Q_conv}, i.e.,
$$\sumt \wt \La(\xt; \hp, \hat{\pi}) - \sumt \wt \La(\xstar; \pt, \pi^t)  + \Lf \normsq{\xt[N] - \xstar}\leq \Lf \normsq{\xt[0] - \xstar}. $$
\vgap
In particular, with $\hpiiN = \grad \myfi(\xbart[N])$ and $\hpiN[] \in \argmax_{p \in P} \sumi p_i \myfi(\xbart[N])$  such that  $f(\xbarN)= \La(\xbarN; \hpN, {\hat \pi}^N)$ (see Lemma \ref{lm:duality}), we have
$$\sumt \wt \La(\xt; \hpN, \hat{\pi}^N) - \sumt \wt \La(\xstar; \pt, \pi^t) \leq \Lf \normsq{\xt[N] - \xstar}. $$
Because $\La(\cdot; \hpN, \hat{\pi}^N)$ is convex with respect to $x$, the first term satisfies
$$\sumt \wt \La(\xt; \hpN, \hat{\pi}^N) \geq \tfrac{N(N+1)}{2} \La(\xbarN; \hpN, \hat{\pi}^N) = \tfrac{N(N+1)}{2} f(\xbarN). $$
Due to the weak duality in Lemma \ref{lm:duality}, the second term is upper bounded by 
$$\sumt \wt \La(\xstar; \pt, \pi^t)\leq \sumt \wt f(\xstar)  = \tfrac{N(N+1)}{2} f(\xstar).$$
Then the desired inequality in \eqref{eq:smooth_f_conv} follows immediately.

\endproof

\vgap

\noindent{\bf Proof of Theorem~\ref{thm:central-sm-str}:}
Similar to the preceding proof, the proposed stepsizes (c.f. \eqref{stp:central_sm_str}), together with $\mu=1$ and $q=\Lf$, verify the requirements in Proposition \ref{pr:central_Q_conv}, thus 
 $$\sumt \wt (\La(\xt; \hp, \hat{\pi}) -  \La(\xstar; \pt, \pi^t))  + \wt[N](\eta + \alpha) \normsq{\xt[N] - \xstar}\leq \wt[1](\eta \normsq{\xt[0] - \xstar} + \tau \sumi \hpi \Dfistar(\hpii;\piit[0])). $$
Using the relation of conjugate Bregman distance functions and the identity $\grad (\sumi \hpi f_i)(\bar x) = \sumi \hpi \grad f_i(\bar x)$, the last term can be upper bounded by
\begin{align*}
 \sumi & \hpi \Dfistar(\hpii;\piit[0]) = \sumi \hpi (W_{f_i}( \xt[0]; \bar x)) \\
 &= (\sumi \hpi f_i)(\xt[0]) - (\sumi \hpi f_i)(\bar x) - \inner{\grad (\sumi \hpi f_i)(\bar x)}{\xt[0] - \bar x}\\
 &\leq \tfrac{\Lf}{2} \normsq{x^0 - \bar x}. 
 \end{align*}
 Thus the $Q$ convergence bound in  \eqref{eq:smooth_Q_str_conv} follows immediately from 
 $$\sumt \wt Q(\zt; \hz) +  \frac{\alpha(\sqrt{4\kappa + 1} - 1)}{4\theta ^N} \normsq{\xt[N] - \hx} \leq \frac{(\sqrt{4\kappa + 1} - 1)}{4} (\alpha\normsq{\xt[0] - \hx} + \Lf \normsq{x^0 - \bar x}).$$
Additionally, setting the preceding $\hz$ to the saddle point $\zstar$ defined in Lemma \eqref{lm:sad_ex} (c.f. \eqref{eq:saddle_point}) such that $Q(\zt; \zstar) \geq 0\ \forall t$ and dividing both sides by $\frac{\alpha(\sqrt{4\kappa + 1} - 1)}{4\theta ^N}$, the geometric convergence of $\xt[N]$ to $\xstar$ in \eqref{eq:smooth_x_str_conv} can be deduced.
\endproof
\vgap
Now we move on to present convergence proofs for structured non-smooth problems.
\vgap\\
\noindent{\bf Proof of Theorem~\ref{thm:cen-non-smooth}}
First, we consider part a) with a non-strongly convex  $u(x)$. The result is also a consequence of Proposition \ref{pr:central_Q_conv}. Since $\Vi(\pii; \bpii):=\tfrac{1}{2} \normsq{\pii - \bpii} \geq \tfrac{1}{2\MA^2} \normsq{\Aitr(\pii - \bpii)}$, the Jensen's inequality implies the condition \eqref{eq:aggregate_stx_cvxit_cst} is satisfied with $q=\MA^2$. Since $\fistar$ is convex, the condition \eqref{eq:strong_convex_dual} is satisfied with $\mu=0$. Additionally, since the chosen stepsizes in Theorem \ref{thm:cen-non-smooth} satisfy the condition \eqref{req:DRAO}, all the requirements for Proposition \ref{pr:central_Q_conv} are met. Thus for any feasible $\hz:=(\hx;\hp, \hpii[])$, we have
$$\sumt \wt Q(\zt; \hz) + \tfrac{\wt[N] \etat[N]}{2} \normsq{\xt[N] - \hx} \leq \tfrac{\wt[1] \etat[1]}{2} \normsq{\xt[0] - \hx} + \tfrac{\wt[1]\taut[1]}{2}\sumi \hpi \normsq{\hpii- \piit[0]}.$$
Let $\hpiiN \in  \argmax_{\pii \in \Pii} \inner{\pii}{\Ai\xbart[N]} - \fistar(\pii)$ and $\hpiN[] \in \argmax_{p \in P} \sumi p_i \myfi(\xbart[N])$ such that $f(\xbarN)= \La(\xbarN; \hpiN, \hpiiN)$ (c.f. Lemma \ref{lm:duality}). Setting $\hz$ to $\hz^N:=(\xstar; \hpN, \hat{\pi}^N)$ leads to 
$$\sumt \wt Q(\zt; \hz^N)  \leq \tfrac{\etat[1]}{2} \Rx^2 + \tfrac{\taut[1]}{2} \DPi^2 = \Rx\MA \DPi.$$
Then the resulting convergence bound in \eqref{eq:nonsmooth_f_conv} can be deduced from the fact  $(\sumt \wt) f(\xbarN) - f(\xstar) \leq \sumt \wt Q(\zt; \hz^N)$. 

As for part b), the derivation is the same except for the different stepsize choice to take advantage of the $\alpha$-strong convexity of $u(x)$.
\vgap
\endproof
\vgap

\section{The DRAO-S method}
The practical application of the DRAO method is limited by the exact computation to the following saddle point problem in Line 5 of Algorithm \ref{alg:cPD}:
\begin{equation}\label{eq:saddle_subproblem}
 \xt \leftarrow \argmin_{x \in X} \max_{p \in P} \sumi p_i [\inner{x}{\vit} - \fistar(\piit)] - \rhostar(p)+ u(x)+ \tfrac{\eta_t}{2} \normsq{x - \xtt}.
 \end{equation}
We relax it by assuming only 
the ability to efficiently compute the $p$-prox mapping defined in \eqref{eq:prox-def}. \textblue{In the DRO setting, the efficient implementations for risk measures induced by several probability uncertainty sets are described in \cite{zhang2019efficient}. For the entropic risk measure, if we select prox-function to be $U(p; \bar p):= \sum_{i=1}^m p_i \log(p_i/\bar{p}_i)$, the computation amounts to a softmax evaluation. For the mean semi-deviation risk of order two, the computation can be implemented as a quadratically constrained quadratic program (QCQP).}

In this section, we  design a novel saddle point sliding (SPS) subroutine to solve \eqref{eq:saddle_subproblem} inexactly in the DRAO method and call the 
resulting method {\sl distributed risk-averse optimization with sliding (DRAO-S)}. 
We show  the DRAO-S method maintains the same order of communication complexity as the DRAO method. Moreover, the total number of $P$-projections required by the DRAO-S method will be mostly optimal, in the sense that it is equivalent to the optimal one required for solving problem~\eqref{eq:orig_prob} with linear local cost functions $f_i$'s. 


\subsection{The Algorithm and Convergence Results}
The SPS subroutine for solving \eqref{eq:saddle_subproblem} inexactly is presented in Algorithm \ref{alg:SPS}.
It is closely related to the classic primal dual (PD) algorithm (see \cite{chambolle2011first,LanBook}) for solving a structured bilinear saddle point problem given by 
$$\min_{y \in X} \max_{p \in P} \inner{v^t y}{p} - \rhostar(p),$$
\textblue{where the matrix $v^t$ is obtained from stacking $(v_1^t)^\top, (v_2^t)^\top, \ldots, (v_m^t)^\top$ from Line 3 of Algorithm \ref{alg:cPD} vertically. }
 In iteration $s$, the subroutine computes an extrapolated prediction of $\sumi p_i^s \vit$ in Line 2, a $y$-prox update in Line 3, and then a $p$-prox update in Line 4. The $p$-prox update utilizes a general Bregman distance function $U$ to allow a suitable choice to take advantage of the geometry of $P$. At the end of $\St$ iterations, the SPS subroutine returns a weighted ergodic average of $\{y^s\}$ as an approximate solution to \eqref{eq:saddle_subproblem}.

\begin{minipage}{0.9\textwidth}
\begin{algorithm}[H]
\caption{\textbf{S}addle \textbf{P}oint \textbf{S}liding (SPS) Subroutine}
\label{alg:SPS}
\begin{algorithmic}[1]
\Require Initial points $\xtt, y^0 \in X$, $p^0, p^{-1} \in P$, and gradients $\{\vit\}, \{\vitt\}$. Non-negative stepsizes $\etat$, $\{\delta_{s}\}$, $\{\gam_{s}\}$ and $\{\beta_{s}\}$, averaging weights $\{q_{s}\}$, and iteration number $S_t$.

\For{$s =  1,2,3 ... S_t$}
\parState{$\tilde{v}^s \leftarrow\begin{cases}
\sumi p^0_{i} \vit + \delta_{1} \sumi (p_i^0 - p_i^{-1}) \vitt & \text{if } s = 1,\\
\sumi p^{s-1}_{i} \vit + \delta_{s} \sumi (p_i^{s-1} - p_i^{s-2}) \vit & \text{if } s \geq 2.
\end{cases}$}
\parState{$y^s \leftarrow \argmin_{y \in X} \inner{y}{\tilde{v}^s} + u(y) + \tfrac{\beta_s}{2} \normsq{y - y^{s-1}} + \tfrac{\etat}{2}\normsq{y - \xtt}.$}
\parState{$p^s \leftarrow \argmax_{p \in P} \sumi p_i (\inner{\vit}{y^s} - \fistar(\piit)) -\rhostar(p)- \gamts[s][] U(p; p^{s-1}).$}
\EndFor
\State \Return $x^t:= \sums q_s y^s / (\sums q_s)$, $y^t:=y^{S_t}$, $\bar{p}^t:= \sums q_s p^s / (\sums q_s)$, $\ptlt:=p^{S_t}$ and  $\dbtilde{p}^t := p^{S_t-1}.$
\end{algorithmic}
\end{algorithm}
\vgap
\end{minipage}


The DRAO-S method is obtained by making the following two modifications to DRAO. First, we require additional initial points $(y^0, \ptlt[0], \pdbtlt[0]) \in  X \times P \times P$. For simplicity, we set $y^0= x^0$ and $\ptlt[0] = \pdbtlt[0]$. Second, we replace the definition of $x^t$ in Line 5 of DRAO
with the output solution of the SPS subroutine
according to: 
\begin{equation}\label{subroutine:SPS_sm}
  \begin{split}
  &\textbf{}(x^t, y^t, \bar p^t, \ptlt, \pdbtlt) = SPS(x^{t-1}, y^{t-1}, \ptltt, \pdbtltt,
  \{\vit\}, \{\vitt\}\ | \ 
  \etat, \{\delta^t_s\}, \{\gam_s^t\}, \{\beta_s^t\}, \{q_s^t\}, \St)\ \forall t\geq 1.\end{split}
\end{equation} 
At the beginning, i.e., $t=1$, we set $\vit[0] = \vit[1] \ \forall i \in [m].$
Due to its similarity to the DRAO method, we call the outer loop of the DRAO-S method (Algorithm \ref{alg:cPD} with modification \eqref{subroutine:SPS_sm}) the outer DRAO loop and say a phase of the DRAO-S method happens if $t$ is increased by $1$.
Accordingly, we call the inner loop of the DRAO-S method (Algorithm \ref{alg:SPS}) the inner SPS loop and say an (inner) iteration happens if $s$ is incremented by 1.
Intuitively, if the inner iteration limits $S_t$ were large enough, $x^t$ obtained from the inner SPS loop would
be a good approximate solution of \eqref{eq:saddle_subproblem}.
However, the $P$-projection complexity, i.e., the total number of inner iterations given by $\sum_t S_t$, might be too large. In addition, notice that the computation burdens of the server subproblem and the worker subproblem during each communication round are still different: the server requires several rounds of $P$ and $X$-projections for the SPS subroutine, while the worker needs just one $\pii$-prox mapping. However, since the server is often more powerful, e.g. the cloud-edge system, this asymmetry may have a limited impact on the overall computation performance of the system. 

We note here that the DRAO-S method is related to the Primal Dual Sliding (PDS) method in \cite{lan2021graph}, where the sliding subroutine is also a primal dual type algorithm. However, since we are dealing with a nested trilinear saddle point problem, rather than the sum of two bilinear saddle point problems, the DRAO-S method differs from the PDS method in two important ways. First, the linear operator $\vt$ for the saddle point subproblem (c.f. \eqref{eq:saddle_subproblem}) changes in every phase. To coordinate consecutive inner SPS loops, we construct a special momentum term from both the current $\vt$ and the previous $v^{t-1}$ when transitioning into a new phase, i.e., Line 2 of Algorithm \ref{alg:SPS}. This construction is inspired by the novel momentum term in the SD method \cite{zhang2019efficient}. Second, as opposed to the single initialization in both the PDS method and the gradient sliding method \cite{lan2016gradient}, the $y$ prox update in Line 3 of Algorithm \ref{alg:SPS} utilizes two distinct initialization points, the ergodic average $\xt[t-1]$ and the last iterate  obtained in the last phase, $\yt[0]$. Even though both are approximate solutions to \eqref{eq:saddle_subproblem}, $y^s$ is used only in the inner SPS loop while the ergodic average $\xt$ is used in both the outer loop and the inner loop.  
As will be discussed in the next subsection, the additional initialization point appears to significantly simplify  the convergence analysis and the selection of stepsizes in comparison to \cite{lan2016gradient,lan2021graph}. 

Now, let us consider the smooth problem. The stepsizes associated with the inner SPS loop need to adapt dynamically in two aspects.  First,
the inner iteration limit $S_t$ needs to be an increasing function of $t$ to maintain the same communication complexity as the DRAO method. 
Second, as a primal dual type algorithm, the inner SPS loop stepsizes, $\gamts$, $\delts$ and $\betats$, need to satisfy a certain condition related to the operator norm of $\vt$, i.e., $\gamts[s-1] \betats \geq \delts \norm{v^t}^2$, to ensure convergence. Specifically,  if the Bregman distance function $U$ in Line 4 of Algorithm \ref{alg:SPS} is 1-strongly convex with respect to $\norm{\cdot}[U]$,  the operator norm of interest is given by 
\begin{equation}\label{def:Mt-def}
\MtU:=\norm{v^t}[2, U^*]:=\max_{\norm{p}[U] \leq 1, \norm{y}\leq 1}\tsum_{i=1}^{m} p_i (\vit) ^\top y.
\end{equation}
 Since a uniform bound on $\MtU$ may not exist if $X$ is unbounded under the smooth setting, we choose $\gamts$ and $\betats$ to adjust dynamically to $\MtU$  in each phase. 
In particular, the next theorem presents the stepsize choice and the convergence result under the  non-strongly convex setting. 

\begin{theorem}\label{thm:sps-sm}
Let a smooth risk-averse problem (c.f. \eqref{eq:orig_prob}) be given.  Let $\MtU$ and $\Lf$ be defined in \eqref{def:Mt-def} and \eqref{eq:smo_cst}, and let $\Rx$ and $\DP$ be defined in Subsection \ref{subsec:notation}. 
If $\{\xt\}_{t=1}^N$ are generated by the DRAO-S method \eqref{subroutine:SPS_sm}  with the following stepsizes:
\begin{align}\label{stp:sps_sm}
\begin{split}
&\wt=t,\ \thetat=(t-1)/(t), \taut = (t-1)/2, \etat = 2\Lf/t,\\
&\Delta > 0,\ \St=\ceil{t\Delta  \MtU },\ \bMtU= \tfrac{\St}{t \Delta },\ \beta_s^t = \beta^t = \tfrac{\DP \bMtU}{\Rt[0]},\ \gam_s^t=\gamt=\tfrac{\Rt[0]\bMtU}{\DP},\\ 
& q_s^t = 1,\ \delta_1^t=\small\begin{cases} \bMtU / \bMtU[t-1] &\text{if } t \geq 2 \\ 1 &\text{if } t = 1 
\end{cases},\ \text{and}\  \delta_s^t = 1\ \forall\ s\geq 2 ,
\end{split}
\end{align}
then the solution $\xbart[N]$ returned by the outer DRAO loop satisfies 
\begin{equation}\label{eq:f_SPS_sm}
f(\xbart[N]) - f(\xstar) \leq \tfrac{2\Lf \Rt[0]^2}{N(N+1)} + \tfrac{2 \DP \Rt[0]}{N(N+1)\Delta}, \forall N \geq 1,  \end{equation}
and there exists an uniform upper bound $\Mtil$ for $\MtU$, i.e., $ \Mtil \geq \MtU,\ \forall t \geq 1$. In addition, if $\Delta= \DP /( \Lf \Rx)$, the convergence bound can be simplified to 
\begin{equation}\label{eq:f_SPS_sm1}
f(\xbart[N]) - f(\xstar) \leq \tfrac{4\Lf \Rt[0]^2}{N(N+1)}, \forall N \geq 1.  \end{equation}
\end{theorem}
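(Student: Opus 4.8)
The plan is to reduce the analysis to the exact one in Proposition~\ref{pr:central_Q_conv} by isolating the single place where DRAO-S departs from DRAO, namely that the saddle subproblem \eqref{eq:saddle_subproblem} in Line~5 is now solved inexactly by the SPS subroutine (Algorithm~\ref{alg:SPS}). Decomposing $Q=\Q[\pi]+\Q[p]+\Q[x]$ as in \eqref{eq:Q-decomp}, the worker dual updates are untouched, so the $\Q[\pi]$ estimate \eqref{eq:central_Qpii_bound} carries over verbatim and all the new work concerns $\Q[x]+\Q[p]$. The key algebraic observation is that, holding the outer dual $\pi^t$ (hence $\vit=\Aitr\piit$ and $\fistar(\piit)$) fixed throughout a phase, a direct expansion of \eqref{eq:Q-decomp} gives
\[
\Q[x](\zt;\hz)+\Q[p](\zt;\hz)=\Psi^t(\xt,\hp)-\Psi^t(\hx,\bpt),\qquad
\Psi^t(y,p):=\sumi p_i\bigl(\inner{\vit}{y}-\fistar(\piit)\bigr)-\rhostar(p)+u(y),
\]
so that $\Q[x]+\Q[p]$ is exactly the primal--dual gap of the bilinear subproblem that SPS is designed to solve, evaluated at the ergodic outputs $(\xt,\bpt)$ against the reference $(\hx,\hp)$.

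The first technical step is a one-phase estimate for Algorithm~\ref{alg:SPS}. I would run the standard accelerated primal--dual argument on $\Psi^t$ augmented with the $\etat$-strongly-convex penalty $\tfrac{\etat}{2}\normsq{\cdot-\xtt}$: write the optimality conditions of the $y$-prox (Line~3) and the $p$-prox (Line~4), take the $\{q_s^t\}$-weighted sum, and telescope the $\tfrac{\betats}{2}\normsq{\cdot}$ and $\gamts U(\cdot;\cdot)$ terms over the inner index $s$. The extrapolation $\tilde v^s$ in Line~2 produces bilinear cross terms that are absorbed by Young's inequality precisely when $\gamts[s-1]\betats\ge\delts(\MtU)^2$; this holds because $\St=\ceil{t\Delta\MtU}$ forces $\bMtU\ge\MtU$, and with the choices $\betats=\DP\bMtU/\Rt[0]$, $\gamts=\Rt[0]\bMtU/\DP$ from \eqref{stp:sps_sm} one has $\betats\gamts=(\bMtU)^2$. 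Using convexity/concavity of $\Psi^t$ to pass from the iterates to the averages $(\xt,\bpt)$, and using the strong convexity of the penalty to retain a negative margin $-\tfrac{\etat}{2}\normsq{\xt-\xtt}$, this yields an inexact analogue of \eqref{pr1:Qx}--\eqref{pr1:Qp}:
\[
\Q[x](\zt;\hz)+\Q[p](\zt;\hz)+\tfrac{\etat}{2}\normsq{\xt-\hx}+\tfrac{\etat}{2}\normsq{\xt-\xtt}\le\tfrac{\etat}{2}\normsq{\xtt-\hx}+E_t(\hp),
\]
where $E_t(\hp)$ collects the leftover $P$-distance boundary terms (since $\rhostar$ need not be strongly concave) and the phase-boundary momentum terms coming from $\tilde v^1$.

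The main obstacle is the cross-phase bookkeeping hidden in $E_t(\hp)$. Because the operator $\vt$ changes each phase, the $s=1$ momentum term links phase $t$ to phase $t-1$ through $p^0=\ptltt$, $p^{-1}=\pdbtltt$ and the rescaling $\delts[1]=\bMtU/\bMtU[t-1]$. The plan is to show that, after multiplying by the outer weight $\wt$ and using $\wtt=\wt\thetat$ together with the stepsizes in \eqref{stp:sps_sm}, both the momentum contributions and the $P$-distance terms telescope across $t$ and cancel against the matching terms of the adjacent phase, in the same spirit as the SD momentum of \cite{zhang2019efficient}; the two-initialization design---carrying $y^0$ from the previous phase while using the fresh prox center $\xtt$---is what makes this cancellation clean. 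Summing over $t$, adding \eqref{eq:central_Qpii_bound}, and invoking Lemma~\ref{lm:agg_sm} (with $q=\Lf$, $\mu=1$) and Young's inequality to absorb the $\Q[\pi]$ cross terms exactly as in Proposition~\ref{pr:central_Q_conv}, produces the global bound
\[
\sumt\wt Q(\zt;\hz)+\Lf\normsq{\xt[N]-\hx}\le\Lf\normsq{\xt[0]-\hx}+\tfrac{c\,\DP\Rt[0]}{\Delta}
\]
for an absolute constant $c$; the point is that the surviving residual is a single $\Delta$-scaled boundary term of order $\DP\Rt[0]/\Delta$ rather than something that accumulates over the $N$ phases.

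Finally, \eqref{eq:f_SPS_sm} follows from this global bound exactly as in the proof of Theorem~\ref{thm:central-sm}: choose the reference $\hz^N$ with $\hpii=\grad\myfi(\xbarN)$ and $\hpN\in\argmax_{p\in P}\sumi p_i\myfi(\xbarN)$ so that $f(\xbarN)=\La(\xbarN;\hpN,\hat\pi^N)$ by Lemma~\ref{lm:duality}, then use convexity of $\La(\cdot;\hpN,\hat\pi^N)$ and weak duality to pass to $(\sumt\wt)\bigl(f(\xbarN)-f(\xstar)\bigr)$ and divide by $\sumt\wt=N(N+1)/2$, which turns the two right-hand terms into $\tfrac{2\Lf\Rt[0]^2}{N(N+1)}$ and $\tfrac{2\DP\Rt[0]}{N(N+1)\Delta}$. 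For the uniform bound on $\MtU$, set $\hz=\zstar$ (Lemma~\ref{lm:sad_ex}) so $Q(\zt;\zstar)\ge0$; the global bound then confines $\{\xt\}$, and hence the gradient points $\{\xundert\}$ defined in \eqref{eq:smooth_alg1}, to a fixed ball about $\xstar$, on which the continuous maps $\grad\myfi$ are bounded, giving $\MtU=\norm{\vt}[2,U^*]\le\Mtil$ uniformly. The simplification \eqref{eq:f_SPS_sm1} is then immediate upon substituting $\Delta=\DP/(\Lf\Rx)$ into \eqref{eq:f_SPS_sm}. I expect the hardest step to be the cross-phase telescoping of the momentum and $P$-distance terms, where the changing operator $\vt$, the scaling $\delts[1]$, and the two initialization points must all be reconciled at once.
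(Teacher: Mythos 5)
Your proposal follows essentially the same route as the paper: the one-phase primal--dual estimate for the SPS subroutine is the content of Proposition~\ref{pr:one-step-sliding}, the cross-phase telescoping of the momentum and $P$-distance terms under the weight/stepsize conditions is exactly Proposition~\ref{pr:central-sliding-Q} (whose requirements \eqref{req:Q-sliding-req} the given stepsizes are verified to satisfy), and your endgame---choosing $\hz^N$ via Lemma~\ref{lm:duality}, dividing by $\sumt\wt=N(N+1)/2$, and bounding $\MtU$ by confining $\{\xtilt\}$ to a ball around $\xstar$ via $Q(\zt;\zstar)\ge 0$---is verbatim the paper's argument. The details you defer (the inter-phase cancellation) are precisely what those two propositions supply, and your identification of the key conditions ($\beta^t\gamma^t=(\bMtU)^2\ge(\MtU)^2$ and the $\delts[1]$ rescaling) is correct.
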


A few remarks are in place regarding the above result. 
First, the stepsizes in the outer DRAO loop are exactly the same as that of Theorem \ref{thm:central-sm}. Since each phase requires only two rounds of communication,  the DRAO-S method  has a communication complexity of $\bigO(\sqrt{\Lf} \Rt[0]/\sqrt{\ep})$. Second, $\bMtU$ defined in \eqref{stp:sps_sm} is the smallest upper bound of $\MtU$ needed to make the inner iteration limit  $\St$ an integer. The factor $\Delta$ in \eqref{stp:sps_sm} represents the conversion factor between the $P$-projection complexity and the communication complexity ($\Pi$ projection complexity). A communication complexity of the order $\bigO(1/\sqrt{\ep})$ can be maintained for any  $\Delta>0$ and the specific choice in \eqref{stp:sps_sm} is needed only for optimal constant dependence. Third, since the number of phases needed to find an $\ep$-optimal solution is bounded by $N_\ep:=2\sqrt{\Lf}\Rx/\sqrt{\ep} $, the total number of $P$-projections is given by 
{\skipdisplay $$\tsum_{t=1}^{N_\ep}\ceil{t \Delta \MtU}  \leq \Delta \Mtil N_\ep^2 + N_\ep = \bigO(\DP \Mtil \Rx/\ep).$$}
Fourth, both the inner SPS loop stepsizes, $\betats$, $\gamts$ and $\delts$, and the inner iteration limit $\St$ adjust dynamically to the varying operator norm $\MtU$ characterizing the difficulty of the saddle point problem \eqref{eq:saddle_subproblem} of each phase. Specifically, when the saddle point problem is easy, i.e., $\MtU$ is small, $\gamts$, $\betats$, and $\St$ become small so that a small number of inner iterations is performed, and vice versa. 
   Thus, when most $\MtU$'s are significantly smaller than the upper bound $\Mtil$,  the total number of $P$-projections can be much smaller than $\bigO(\DP \Mtil \Rx/\ep)$. Such a saving is possible because $\St$ can compensate for the changing $\betats$ and $\gamts$ such that the the effective proximal penalty parameter, $\wt \gamts/\St$ and $\wt \betats / \St$, remains constant across phases.
In contrast, it is difficult for single loop primal dual type algorithms, such as the SD method \cite{zhang2020optimal}, to adjust dynamically to the varying operator norm of $\vt$ in each iteration.

The following theorem presents the stepsize choice and the convergence result under the  strongly convex setting. 
\vgap


\begin{theorem}\label{thm:sps-sm-str}
Let a smooth problem $f$ (c.f. \eqref{eq:orig_prob}) with $\alpha>0$ be given. Let $\Rx$ and $\DP$ be defined in Subsection \ref{subsec:notation}. Let the smoothness constant $\Lf$ be defined in \eqref{eq:smo_cst} such that $\kappa:=\Lf/\alpha$ denotes the condition number. If $\{\xt\}_{t=1}^N$ are generated by the DRAO-S method \eqref{subroutine:SPS_sm}  with the following stepsize:
\begin{equation}\label{stp:sps_sm_str}
\begin{split}
&\thetat=\theta := \tfrac{\sqrt{8\kappa + 1}-1}{\sqrt{8\kappa + 1}+1},\ \wt=(\tfrac{1}{\theta})^{t-1},\ \taut =\tau:= \tfrac{\sqrt{8\kappa + 1} - 1}{2},\ \etat =\eta:= \tfrac{\alpha(\sqrt{8\kappa + 1} - 1)}{4},\\
& \betats =\tfrac{\alpha (s-1)}{4} ,\ \gamts=\tfrac{2\St(\St + 1) }{\wt \alpha s \Delta },\ \delta_s^t=(s-1)/s,\\
&\qts = s,\ S_t =  \ceil{( 2 \wt \Delta)^{1/2}\MtU},\ \Delta > 0,
\end{split}
\end{equation}
the last iterate $\xt[N]$ converges geometrically:
\begin{equation}\label{eq:sps_x_sm_str_conv}
\normsq{\xt[N] - \xstar} \leq \theta^N \left((1 + 2\kappa)\normsq{\xt[0] - \xstar} + \tfrac{4\DP^2 }{\alpha\eta \Delta }\right), \forall N \geq 1,\end{equation}
and there exist an $\tilde M$ such that $\MtU \leq \tilde M\ \forall t \geq 1.$ If $\Delta= 2  \DP^2 / \eta \Lf \Rt[0]^2$ and $\kappa \geq 1$,  the total number of $P$-projections required to find an $\epsilon$-close solution, i.e., $\norm{x^N - \xstar} \leq \epsilon$, is bounded by $\bigO(\tfrac{\kappa^{1/4}\Mtil \DP}{\alpha\sqrt{\ep}} + \sqrt{\kappa} \log(\tfrac{1}{\ep}))$. 
\vgap\end{theorem}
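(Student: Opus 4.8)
The plan is to run the gap-function machinery of Proposition~\ref{pr:central_Q_conv} on the outer DRAO loop, but with the exact $(x,p)$-prox update \eqref{eq:saddle_subproblem} replaced by the inexact output of the SPS subroutine. Since the worker updates in Line~3 are still performed exactly, the $Q_\pi$ bound \eqref{eq:central_Qpii_bound} carries over verbatim; the entire task is to produce an inexact analog of the $Q_x+Q_p$ bound \eqref{eq:central_Qx_Qp} from a convergence analysis of Algorithm~\ref{alg:SPS}, and then to combine the two exactly as in Proposition~\ref{pr:central_Q_conv}.

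First I would establish a per-phase bound for the SPS subroutine, viewing \eqref{eq:saddle_subproblem} as the bilinear saddle-point problem $\min_{y\in X}\max_{p\in P}\inner{v^t y}{p}-\rhostar(p)+u(y)+\tfrac{\etat}{2}\normsq{y-\xtt}$. Writing the optimality conditions of the $y$-prox update (Line~3) and the $p$-prox update (Line~4), taking a $\qts$-weighted sum, and using that the stepsizes in \eqref{stp:sps_sm_str} satisfy the primal--dual condition $\gamts[s-1]\betats\ge\delts(\MtU)^2$, the standard telescoping yields a bound on $\sums\qts(Q_x+Q_p)(\cdot;\hz)$ in terms of the initial prox gaps at $(y^0,p^0)$, a terminal term in $\normsq{\xt-\hx}$, and momentum cross-terms at the two phase boundaries. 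Passing to the ergodic averages $\xt=\sums\qts y^s/\sums\qts$ and $\bar p^t=\sums\qts p^s/\sums\qts$ by convexity then gives the desired surrogate for \eqref{eq:central_Qx_Qp}.

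The main obstacle is the bookkeeping that couples consecutive phases. Because the operator $v^t$ changes each phase, the $s=1$ prediction in Line~2 mixes the current $v^t$ with the previous $v^{t-1}$ through the SD-style momentum term built from $(\ptltt,\pdbtltt)$; likewise the $y$-prox update is warm-started from both the ergodic average $\xtt$ and the last inner iterate $y^0=y^{t-1}$. I would show that, after multiplying the per-phase bound by $\wt$ and summing over $t$ (using $\wtt=\wt\thetat$ and the prescribed $\betats,\gamts,\delts$), the boundary cross-term left at the end of phase $t-1$ exactly cancels the $s=1$ momentum term at the start of phase~$t$, so that only the first-phase initialization and a final-iterate residual survive. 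This telescoping---enabled precisely by the double initialization and the cross-phase momentum---is where the argument departs from the additive-composite sliding of \cite{lan2016gradient,lan2021graph}. Combining the surviving terms with \eqref{eq:central_Qpii_bound}, and checking that the outer stepsizes \eqref{stp:sps_sm_str} (with $\sqrt{8\kappa+1}$ in place of the $\sqrt{4\kappa+1}$ of Theorem~\ref{thm:central-sm-str}, the extra slack absorbing the inner inexactness) verify \eqref{req:DRAO}, produces an aggregate bound $\sumt\wt Q(\zt;\hz)+c_N\normsq{\xt[N]-\hx}\le(\text{initial terms})$. Setting $\hz=\zstar$ from Lemma~\ref{lm:sad_ex} so that each $Q(\zt;\zstar)\ge0$, discarding the nonnegative sum, and dividing by $c_N$ yields \eqref{eq:sps_x_sm_str_conv}; the residual $\tfrac{4\DP^2}{\alpha\eta\Delta}$ is exactly the accumulated inner-loop error carried into the estimate.

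Finally, the uniform bound $\MtU\le\Mtil$ follows because \eqref{eq:sps_x_sm_str_conv} confines every $\xt$, hence every gradient evaluation point $\xundert$, to a fixed ball around $\xstar$; since $\vit=\grad f_i(\xundert)$ with $f_i$ smooth, $v^t$ and therefore $\MtU=\norm{v^t}[2,U^*]$ are bounded uniformly. For the $P$-projection count, the geometric rate fixes the number of phases at $\Nep=\bigO(\sqrt{\kappa}\log(1/\ep))$. Summing $\St=\ceil{(2\wt\Delta)^{1/2}\MtU}\le(2\wt\Delta)^{1/2}\Mtil+1$ over $t\le\Nep$, the $+1$ terms contribute $\Nep=\bigO(\sqrt{\kappa}\log(1/\ep))$, while $\sum_t(2\wt\Delta)^{1/2}$ is a geometric series in $(1/\theta)^{1/2}$ whose sum is of order $\sqrt{\kappa}$ times its largest term $(1/\theta)^{(\Nep-1)/2}$. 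Substituting $\Delta=2\DP^2/(\eta\Lf\Rt[0]^2)$, using $\eta=\Theta(\alpha\sqrt{\kappa})$, $\Lf=\kappa\alpha$, and $\kappa\ge1$, and simplifying then delivers the claimed $\bigO\!\big(\kappa^{1/4}\Mtil\DP/(\alpha\sqrt{\ep})+\sqrt{\kappa}\log(1/\ep)\big)$.
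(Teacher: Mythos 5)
Your overall architecture matches the paper's: a per-phase primal--dual bound for the SPS subroutine (the paper's Proposition~\ref{pr:one-step-sliding}), an $\wt$-weighted aggregation spliced into the $Q_\pi$ bound of Proposition~\ref{pr:central_Q_conv} (the paper's Proposition~\ref{pr:central-sliding-Q}), the choice $\hz=\zstar$ to get \eqref{eq:sps_x_sm_str_conv}, the boundedness of $\xt$ (hence of $\xundert$ and $\MtU$), and the geometric-series count of $\sum_t\St$ --- your final arithmetic for the $\bigO(\kappa^{1/4}\Mtil\DP/(\alpha\sqrt{\ep})+\sqrt{\kappa}\log(1/\ep))$ bound is the same as the paper's.

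However, the mechanism you designate as the heart of the argument --- that ``the boundary cross-term left at the end of phase $t-1$ exactly cancels the $s=1$ momentum term at the start of phase~$t$'' --- does not apply to this theorem. The prescribed stepsizes \eqref{stp:sps_sm_str} give $\delts[1]=(1-1)/1=0$ and $\betats[1]=0$, so the $s=1$ prediction in Line~2 of Algorithm~\ref{alg:SPS} carries no $v^{t-1}$-momentum at all and the warm start from $y^0$ carries no weight; consequently the cross-phase matching condition $\tilwt\delts[1]\qts[1]=\tilwt[t-1]\qts[\nSt[t-1]][t-1]$ in \eqref{req:Q-sliding-req} is violated (its left side is zero), and there is nothing at the start of phase $t$ for the terminal cross-term $-q_{S}\langle y^{S}-x,\sum_i \vit(p_i^{S}-p_i^{S-1})\rangle$ of phase $t-1$ to cancel against. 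Your telescoping, as described, leaves that term dangling. The paper instead verifies the \emph{alternative} condition \eqref{req:Q-sliding-alt}: since $\gamts[\St](\betats[\St]+\alpha/2)\geq\MtU^2$ under \eqref{stp:sps_sm_str}, each phase's terminal cross-term is absorbed \emph{within the same phase} by Young's inequality against $q_{S}(\beta_{S}+\alpha/2)\normsq{y^{S}-x}$ and $q_{S}\gamma_{S}\normsq{p^{S}-p^{S-1}}[U]$, so the per-phase bound \eqref{eq:SPS_phase_bound} simplifies to a self-contained inequality with only the $U(p;p^0)$ and $U(p;p^{\St})$ terms telescoping across phases. The cross-phase momentum cancellation you describe is the right mechanism for Theorems~\ref{thm:sps-sm} and \ref{thm:sps-ns}(a), where $\delts[1]=\bMtU/\bMtU[t-1]\neq 0$, but not here; to complete your proof you must switch to the within-phase absorption.
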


Note that the strong convexity modulus $\alpha$ is split into two to accelerate both the outer DRAO loop and the inner SPS loop. For the outer DRAO loop, the proposed stepsize is the same as that of  Theorem \ref{thm:central-sm-str} if $\alpha/2$ is viewed as the strong convexity modulus. This allows the DRAO-S method to maintain the same order of communication complexity, i.e., $\bigO(\sqrt{\kappa} \log(1/\ep))$. For the inner SPS loop, the stepsizes are similar to that of the accelerated primal-dual method (see \cite{LanBook}) with a strong convexity modulus of $\alpha/2$. 
Moreover, similar to Theorem~\ref{thm:sps-sm}, the stepsize $\gamts$ and the inner iteration limit $\St$ adjust dynamically to the varying operator norm $\bMtU$ in each phase.
It is also worth noting that the constant dependence of the $P$-projection complexity on $\kappa$ in Theorem \ref{thm:sps-sm-str} is larger than the optimal by a factor of $\kappa^{1/4}$. This complexity can be further improved to $\bigO({\Mtil \DP}/{\alpha\sqrt{\ep}} + \sqrt{\kappa} \log({1}/{\ep}))$ if some stepsize choice similar to Theorem 6 of \cite{zhang2022solving} is utilized.  


Next, let us move on to the structured non-smooth problem. Since $\Pi$ is assumed be bounded, the following uniform upper bound of $\MtU$ (c.f. \eqref{def:Mt-def}) is useful for convergence analysis,
\begin{equation}\label{def:Mpibar}
\MAPi = \max_{\pi \in \Pi} \{\norm{[\Aitr[1] \piit[t][1];\ldots; \Aitr[m] \piit[t][m] ]}[2, U^*] := \max_{\pi \in \Pi}\max_{\norm{y}[2] \leq 1, \norm{p}[U] \leq 1} \tsum_{i=1}^{m} p_i \inner{\Aitr \pii}{y}\}.
\end{equation} 
Specifically, the stepsize choices and the convergence properties of the DRAO-S method, applied to  both  non-strongly and  strongly convex settings, are presented in the next theorem.

\begin{theorem}\label{thm:sps-ns}
Let a structured non-smooth problem $f$ \eqref{eq:orig_prob} be given. Suppose $\DPi$, $\MA$, $\MtU$, and $\MAPi$ are defined in \eqref{eq:MA_def}, \eqref{def:Mt-def} and \eqref{def:Mpibar}, and suppose $\DP$ and $\Rx$ are defined in Subsection \ref{subsec:notation}. \\
a) If $\alpha=0$ and the stepsizes are given by
\begin{equation}\label{stp:drao-s-ns}
    \begin{split}
        &\wt=1,\ \thetat=1, \etat = \MA \DPi / 2\Rt[0], \taut = \MA \Rt[0] / 2\DPi,\\
        &\Delta > 0, \St= \ceil{\MtU \Delta},\ \bMtU:= \tfrac{\St}{\Delta},\ \beta_s^t = \beta = \tfrac{\DP \bMtU}{\Rt[0]},\ \gam_s^t=\gam=\tfrac{\Rt[0]\bMtU}{\DP},\\ 
        & q_s^t = 1,\ \delta_1^t=\small\begin{cases} \bMtU / \bMtU[t-1] &\text{if } t \geq 2 \\ 1 &\text{if } t = 1 \end{cases},\ \text{and}\  \delta_s^t = 1\ \forall\ s\geq 2 ,
    \end{split}
\end{equation}
 the solution $\xbarN$ returned by the DRAO-S method satisfies
\begin{equation}\label{eq:sps-f-ns}
f(\xbarN) - f(\xstar) \leq \tfrac{ \MA \DPi \Rt[0]}{N} + \tfrac{\DP  \Rt[0]}{\Delta N}, \forall N \geq 1.
\end{equation}
In particular, if $\Delta = \tfrac{\DP }{\MA \DPi}$, the convergence bound can be simplified to 
\begin{equation}\label{eq:sps-f-ns1}
f(\xbarN) - f(\xstar)  \leq \tfrac{2 \MA \DPi \Rt[0]}{N}\ \forall N \geq 1.
\end{equation}
b) If $\alpha > 0$ and the stepsizes are given by
\begin{align}\label{stp:drao-sps-ns-str}
\begin{split}
&\wt=t,\ \thetat=(t-1)/t,\ \etat = t\alpha/6,\ \taut = 6 / t\alpha,\\
&\Delta > 0 \ , \St = \ceil{\Delta \bMAPiU^2},\ \gamt_s = \gamt := 4 \bMAPiU^2 /\alpha t,\ \\
&\betats = \small \begin{cases}\tfrac{\alpha}{4}(t-1) & \text{if } s = 1 \\ \tfrac{\alpha}{4} t\  & \forall\ s \geq 2 \end{cases},\ 
\delts = \small \begin{cases}\tfrac{t-1}{t} &\text{if } s=1\\ 1 &\forall\ s \geq 2\end{cases}, \ \qts = 1,   
\end{split}
\end{align}
 the solution $\xbarN$ returned by the DRAO-S method satisfies
\begin{equation}\label{eq:sps-f-ns-str}
\begin{split}
f(\xbarN) - f(\xstar) &\leq \tfrac{1}{N(N+1)} (\tfrac{\alpha}{6} \Rt[0]^2 + \tfrac{6 \MA^2 \DPi^2}{\alpha} + \tfrac{4 \bMAPiU^2 \DP^2}{\alpha \Delta}).
\end{split}
\end{equation}
In particular, if $\Delta = { \DP^2}/{(\MA^2\DPi^2 + \alpha^2 \Rt[0]^2/6)}$, the convergence bound can be simplified to 
\begin{equation}\label{eq:sps-f-ns-str1}
f(\xbarN) - f(\xstar) \leq \tfrac{1}{N(N+1)} (\alpha \Rt[0]^2 + \tfrac{10 \MA^2\DPi^2}{\alpha}).
\end{equation}
\end{theorem}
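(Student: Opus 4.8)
The plan is to regard DRAO-S as the exact DRAO method of Algorithm~\ref{alg:cPD} in which the Line~5 saddle subproblem \eqref{eq:saddle_subproblem} is solved only \emph{inexactly} by the SPS subroutine, and to track the resulting error through the $Q$-gap decomposition \eqref{eq:Q-decomp}. Since the workers still perform the Line~3 dual prox update exactly, the $Q_\pi$ bound \eqref{eq:central_Qpii_bound} from the proof of Proposition~\ref{pr:central_Q_conv} carries over unchanged (with $\mu=0$), $x^t$ now denoting the inner ergodic average. The real work is to replace the clean exact bounds \eqref{pr1:Qp}--\eqref{pr1:Qx} on $\wt(Q_x+Q_p)(\zt;\hz)$, evaluated at $\zt=(\xt;\bpt,\piit[t])$, by an inexact counterpart produced by the inner primal--dual iterations, and then to show that this inexactness telescopes across phases so that the outer-loop recursions of Theorems~\ref{thm:cen-non-smooth} and \ref{thm:central-sm-str} still close.

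First I would fix a phase $t$ and analyze the inner SPS loop as a primal--dual method for the bilinear subproblem $\Phi^t(y,p):=\sumi p_i[\inner{y}{\vit}-\fistar(\piit)]-\rhostar(p)+u(y)+\tfrac{\etat}{2}\normsq{y-\xtt}$, recalling $\vit=\Aitr\piit$. Using the prox-mapping optimality condition (Lemma~3.1 of \cite{LanBook}) for the $y$-update (Line~3) and the $p$-update (Line~4), one obtains a per-iteration inequality whose primal side telescopes the $\tfrac{\betats}{2}\normsq{y-y^{s-1}}$ terms and retains the fixed-center penalty $\tfrac{\etat+\alpha}{2}\normsq{\cdot-\xtt}$ (the $\alpha$-strong convexity of $u$ entering part~b), and whose dual side telescopes the $\gamts U(p;p^{s-1})$ terms. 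The bilinear cross terms are controlled by the extrapolation $\tilde v^s$ in Line~2 together with the stepsize relation $\gamts[s-1]\betats\ge\delts\normsq{\vt}$ and Young's inequality, using $\normsq{\vt}=\MtU^2$ (c.f.\ \eqref{def:Mt-def}). Summing with weights $\qts$ and applying Jensen's inequality to $\xt=\sums\qts y^s/\sums\qts$ and $\bpt=\sums\qts p^s/\sums\qts$ then yields a bound of the shape \eqref{eq:central_Qx_Qp}, but with one extra residual: a leftover bilinear momentum term at the last inner iterate coupling $(\ptlt,\pdbtlt)=(p^{S_t},p^{S_t-1})$ to $\vt$.

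The crux is the next step: summing these per-phase bounds over $t=1,\dots,N$ and telescoping this residual momentum term across phases. Because the operator $\vt=[\Aitr[1]\piit[t][1];\dots;\Aitr[m]\piit[t][m]]$ changes every phase, the usual single-loop primal--dual telescoping is broken; instead one must show that the leftover term $\inner{\sumi(\ptltt-\pdbtltt)\vit[t-1]}{\,\cdot\,}$ at the end of phase $t-1$ is absorbed by the $s=1$ extrapolation $\delta_1^t\sumi(p_i^0-p_i^{-1})\vit[t-1]$ at the start of phase $t$. This is precisely why \eqref{subroutine:SPS_sm} carries $p^0=\ptltt$ and $p^{-1}=\pdbtltt$ into the new phase, why Line~2 builds $\tilde v^1$ from the \emph{previous} operator $\vit[t-1]$, and why two distinct initializations are used (the fixed center $\xtt$ and the moving center $y^0=y^{t-1}$). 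I would verify that the prescribed $\delta_1^t=\bMtU/\bMtU[t-1]$ (part~a) and the $s=1$ values of $\betats,\delts$ (part~b) make the consecutive terms cancel, while the $y$-side centers stay consistent so that the inner penalties produce the $-\tfrac{\wt\etat}{2}\normsq{\xt-\xtt}$ term needed to absorb the $Q_\pi$ cross terms through the aggregate condition (Jensen gives $q=\MA^2$) exactly as in \eqref{eq:central_Qx_Qp}--\eqref{req:DRAO}. I expect this cross-phase bookkeeping to be the main obstacle, since it is where the nested (rather than additive) structure of \eqref{eq:prob} forces the momentum to couple $v^{t-1}$ and $v^t$.

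Once the residual is telescoped, I would add the $Q_\pi$ bound \eqref{eq:central_Qpii_bound} and check, as in the proofs of Theorems~\ref{thm:cen-non-smooth} and \ref{thm:central-sm-str}, that the stepsizes \eqref{stp:drao-s-ns} (part~a) and \eqref{stp:drao-sps-ns-str} (part~b) satisfy both the inner condition $\gamts[s-1]\betats\ge\delts\MtU^2$ and the outer weight/strong-convexity recursions, so every terminal and initial inequality closes. This produces a single bound $\sumt\wt Q(\zt;\hz)+(\text{terminal})\le(\text{initial})$ valid for all feasible $\hz$. Selecting $\hz=(\xstar;\hpN,\hat\pi^N)$ with $\hpiiN\in\argmax_{\pii}\inner{\pii}{\Ai\xbarN}-\fistar(\pii)$ and $\hpN\in\argmax_{p}\sumi p_i f_i(\xbarN)$, I would invoke \eqref{eq:optimality_gap} and weak duality to pass to $f(\xbarN)-f(\xstar)$. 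Finally, plugging in $\St=\ceil{\MtU\Delta}$ with $\sumt\wt=N$ for part~a, and $\St=\ceil{\Delta\bMAPiU^2}$ with $\wt=t$, $\sumt\wt=N(N+1)/2$ and the uniform bound $\MtU^2\le\bMAPiU^2$ for part~b, controls the residual and yields \eqref{eq:sps-f-ns} and \eqref{eq:sps-f-ns-str}; the simplified rates \eqref{eq:sps-f-ns1} and \eqref{eq:sps-f-ns-str1} follow by substituting the stated values of $\Delta$.
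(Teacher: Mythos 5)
Your proposal follows essentially the same route as the paper: the per-phase primal--dual analysis of the inner SPS loop you describe is exactly Proposition~\ref{pr:one-step-sliding}, the cross-phase telescoping of the leftover momentum term (absorbed by the $s=1$ extrapolation built from $v^{t-1}$ and the carried-over pair $(\ptltt,\pdbtltt)$, under the conditions \eqref{req:Q-sliding-req}) is exactly Proposition~\ref{pr:central-sliding-Q}, and the final step of combining with the $Q_\pi$ bound, choosing the reference point $(\xstar;\hpN,\hat\pi^N)$, and invoking duality matches the paper's proof verbatim. The only difference is presentational: the paper packages the two main steps as standalone propositions and reduces the theorem's proof to a stepsize verification plus substitution, whereas you re-derive that machinery inline.
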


A few remarks are in place.
First, observe the above inner iteration limit $\St$   adjust dynamically to the operator norm $\MtU$ for the non-strongly convex case \eqref{stp:drao-s-ns}, but not for the strongly convex case \eqref{stp:drao-sps-ns-str}. This shortcoming is an artifact of the order of prox updates in Algorithm \ref{alg:SPS}. If a $p$-prox update, utilizing a $y$-momentum prediction, is performed before the $y$-prox update,  $\St$ can be chosen to be $\ceil{\Delta \MtU^2}$ to achieve the same effect. 
Moreover, since two communication rounds is required for each phase, the preceding result implies an $\bigO(1/\ep)$ ($\bigO(1/\sqrt{\ep})$) communication complexity when $\alpha=0$ (resp. $\alpha>0$). Since the inner iteration limit $\St$ is bounded, it also implies an $\bigO(1/\ep)$ (resp. $\bigO(1/\sqrt{\ep})$) $P$-projection complexity. In particular, with the specific choices of $\Delta$ shown above, the DRAO-S method can achieve
 the optimal constant dependence on problem parameters, that is, $\bigO(\Rt[0]\MA\DPi/\ep)$ communication and $\bigO(\bMAPiU \DP \Rt[0]/\ep)$ $P$-projection complexities when $\alpha=0$, and  $\bigO(\MA\DPi/\sqrt{\ep \alpha})$ communication and $\bigO(\bMAPiU \DP /\sqrt{\ep\alpha})$ $P$-projection complexities when $\alpha>0.$


\subsection{Convergence Analysis}
Our goal in this subsection is to establish the convergence rates of the DRAO-S method stated in Theorem \ref{thm:sps-sm}, \ref{thm:sps-sm-str} and \ref{thm:sps-ns}.\\
First, we present a recursive bound to characterize the convergence property of each inner SPS loop under both  the non-strongly convex ($\alpha = 0$) and the strongly convex ($\alpha>0$) settings. 

\begin{proposition}\label{pr:one-step-sliding}
Fix a $t\geq 1$. Let $\MtU := \norm{\vt}[2, U^*]$
and $\MttU := \|\vt[t-1]\|_{2, U^*}$. 
If the SPS stepsizes in Algorithm \ref{alg:SPS} satisfy: 
\begin{align}\label{req:SPS_phase}
\begin{split}
&\delta_s = q_s / q_{s-1},\ \beta_s \gam_{s-1} \geq \delta_{s} \MtU^2, \forall s \geq 2,\\
&q_{s-1} (\beta_{s-1} + \alpha/2) \geq q_s \beta_s, \ q_{s-1} \gam_{s-1} \geq q_{s} \gam_{s}, \forall s \geq 2,
\end{split}
\end{align}
 the generated iterates, $\{(\yt[s], \pt[s])\}$ and $(\xt, \bar{p}^t)$, satisfy the following relation for all $x \in X$, $p \in P$ and $S\geq 1$:
\begin{align}\label{eq:SPS_phase_bound}
 \begin{split}
 (\tsum_{s=1}^{S} q_s)& [\La(x^t; p, \pi^t) - \La(x; \bar{p}^t, \pi^t)]  + \delta_1 q_1 \inner{y^0 - x}{\tsum_{i=1}^m \vit[t-1] (p_i^0 - p_i^{-1})} -q_S \inner{y^{S} - x}{\tsum_{i=1}^m \vit (p_i^{S} - p_i^{S - 1})}\\
   &  +\tfrac{(\tsum_{s=1}^{S} q_s)}{2} [{\eta_t} \normsq{\xt - \xtt} + (\eta_t + \alpha/2) \normsq{\xt - x} - \etat \normsq{\xtt -x}]\\ 
   \leq& q_1\gam_1 U(p; p^0) - q_S \gam_S [U(p;p^S) + \tfrac{1}{2}\normsq{p^S- p^{S-1}}[U]]  +\tfrac{q_1 \delta_1^2 \MttU^2}{2\beta_1} \normsq{p^{0} - p^{-1}}[U] \\
   &-\tfrac{1}{2}[q_S (\beta_S + \alpha/2) \normsq{y^S - x}  - q_1 \beta_1 \norm{y^0 - x}^2].
   \end{split}
\end{align}
\vspace{-5mm}
\end{proposition}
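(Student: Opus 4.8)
The plan is to treat $\La(\cdot\,;\cdot,\pi^t)$ as a bilinear saddle function in $(y,p)$ with the within-phase fixed coupling data $\{\vit\}$, run an accelerated primal--dual (Chambolle--Pock type) telescoping analysis on the two prox updates of Algorithm~\ref{alg:SPS}, and only pass to the ergodic averages $x^t$ and $\bar p^t$ at the very end. First I would apply the three-point prox inequality (Lemma 3.1 of \cite{LanBook}) to the $y$-update in Line~3. Since $\La(\cdot\,;p,\pi^t)$ is affine in $y$ plus the $\alpha$-strongly convex term $u$, this bounds $\inner{\tilde v^s}{y^s-x}+u(y^s)-u(x)$ from above by the Euclidean proximal quantities $\tfrac{\beta_s}{2}(\normsq{y^{s-1}-x}-\normsq{y^s-x}-\normsq{y^s-y^{s-1}})$ and $\tfrac{\etat}{2}(\normsq{\xtt-x}-\normsq{y^s-x}-\normsq{y^s-\xtt})$, plus an extra $-\tfrac{\alpha}{2}\normsq{y^s-x}$ coming from strong convexity of $u$. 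Symmetrically, the optimality condition for the $p$-update in Line~4, using only convexity of $\rhostar$ and the $1$-strong convexity of $U$ with respect to $\norm{\cdot}[U]$, bounds the dual gap $\La(y^s;p,\pi^t)-\La(y^s;p^s,\pi^t)$ by $\gam_s\big(U(p;p^{s-1})-U(p;p^s)-U(p^s;p^{s-1})\big)$, where $U(p^s;p^{s-1})\ge\tfrac12\normsq{p^s-p^{s-1}}[U]$.

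Adding the two inequalities and recalling $\La(y^s;p^s,\pi^t)-\La(x;p^s,\pi^t)=\inner{\tsum_{i}p^s_i\vit}{y^s-x}+u(y^s)-u(x)$, I obtain a one-step bound on the gap $\La(y^s;p,\pi^t)-\La(x;p^s,\pi^t)$ in terms of the $\beta$-, $\etat$- and $U$-proximal blocks plus the bilinear residual $\inner{\tsum_{i}p^s_i\vit-\tilde v^s}{y^s-x}$. Inserting the definition of $\tilde v^s$ decomposes this residual into a \emph{telescoping} piece $\inner{\tsum_i(p^s_i-p^{s-1}_i)\vit}{y^s-x}$ and a \emph{momentum} piece $-\delta_s\inner{\tsum_i(p^{s-1}_i-p^{s-2}_i)\vit}{y^s-x}$ (for $s\ge2$; for $s=1$ the momentum is carried by the previous-phase operator $\vit[t-1]$ and the iterates $p^0,p^{-1}$). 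I would then rewrite the momentum piece as $-\delta_s\inner{\tsum_i(p^{s-1}_i-p^{s-2}_i)\vit}{y^{s-1}-x}-\delta_s\inner{\tsum_i(p^{s-1}_i-p^{s-2}_i)\vit}{y^s-y^{s-1}}$: the first part is exactly the telescoping piece of iteration $s-1$ (same within-phase operator $\vit$), while the second is a genuine cross term.

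The cross term is where the hypotheses enter. Using the operator-norm estimate $\norm{\tsum_i(p^{s-1}_i-p^{s-2}_i)\vit}\le\MtU\norm{p^{s-1}-p^{s-2}}[U]$ from \eqref{def:Mt-def} together with Young's inequality, I would split this term so that its $y$-part is absorbed by the $-\tfrac{\beta_s}{2}\normsq{y^s-y^{s-1}}$ of the current $y$-update and its $p$-part by the diagonal term $-\gam_{s-1}U(p^{s-1};p^{s-2})$ of the previous $p$-update; the requirement $\beta_s\gam_{s-1}\ge\delta_s\MtU^2$ in \eqref{req:SPS_phase} is exactly what makes both absorptions feasible. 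Taking the $q_s$-weighted sum over $s=1,\dots,S$ and invoking the remaining relations in \eqref{req:SPS_phase} then collapses everything to boundary data: the weight--extrapolation relation $\delta_s=q_s/q_{s-1}$ governs the telescoping of the paired bilinear terms, $q_{s-1}\gam_{s-1}\ge q_s\gam_s$ telescopes the $U$-blocks to $q_1\gam_1U(p;p^0)-q_S\gam_S[U(p;p^S)+\tfrac12\normsq{p^S-p^{S-1}}[U]]$, and $q_{s-1}(\beta_{s-1}+\alpha/2)\ge q_s\beta_s$ telescopes the $\beta$-blocks. Here half of each per-step gain $-\tfrac{\alpha}{2}\normsq{y^s-x}$ feeds this $\beta$-comparison, producing the terminal coefficient $\beta_S+\alpha/2$, while the other half, via Jensen's inequality, upgrades the ergodic $\etat$-block to $(\etat+\alpha/2)$. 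Finally, convexity of $\La(\cdot\,;p,\pi^t)$ in $y$ and concavity in $p$ give $\tsum_{s=1}^S q_s\La(y^s;p,\pi^t)\ge(\tsum_{s=1}^S q_s)\La(x^t;p,\pi^t)$ and $\tsum_{s=1}^S q_s\La(x;p^s,\pi^t)\le(\tsum_{s=1}^S q_s)\La(x;\bar p^t,\pi^t)$, which produce the ergodic gap on the left of \eqref{eq:SPS_phase_bound}.

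The step I expect to be the main obstacle is the bookkeeping at the $s=1$ boundary, where the momentum is built from the \emph{previous} phase's operator $\vit[t-1]$ rather than from $\vit$; this is precisely where consecutive inner loops are stitched together, and it is responsible for the two non-canonical terms of \eqref{eq:SPS_phase_bound}. Concretely, the momentum ``first part'' at $s=1$ has no iteration $0$ to telescope against, so it survives as the boundary inner product $\delta_1q_1\inner{y^0-x}{\tsum_i\vit[t-1](p^0_i-p^{-1}_i)}$, while applying Young's inequality to the $s=1$ cross term necessarily brings in the norm $\MttU$ of the previous operator and leaves the residual $\tfrac{q_1\delta_1^2\MttU^2}{2\beta_1}\normsq{p^0-p^{-1}}[U]$. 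Keeping the two operators $\vit$ and $\vit[t-1]$ rigorously separate through this Young step, and verifying that the un-absorbed tail at $s=S$ is exactly $-q_S\inner{y^S-x}{\tsum_i\vit(p^S_i-p^{S-1}_i)}$, is the delicate part; the remainder is the routine telescoping described above.
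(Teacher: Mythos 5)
Your proposal follows essentially the same route as the paper's proof: three-point inequalities for the $y$- and $p$-prox updates with the strong convexity of $u$ split as $\alpha/2 + \alpha/2$ between the $\beta$- and $\etat$-blocks, the decomposition of $\tilde v^s$ into a telescoping piece and a momentum piece, Young's inequality with $\beta_s\gam_{s-1}\geq\delta_s\MtU^2$ to absorb the cross term, $q_s$-weighted telescoping, the special $s=1$ treatment with $\vit[t-1]$ producing the two boundary terms, and Jensen's inequality to pass to the ergodic pair $(\xt,\bar p^t)$. The only (inessential) difference is ordering — the paper sums the $y$-inequalities first and then adds the summed $p$-inequalities, whereas you combine the two one-step bounds before summing.
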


\begin{proof}
Fix points $x \in X$ and $p \in P$. First, consider the convergence of $y^s$. Since $u(y) + \etat \normsq{y-x^{t-1}}/2$ has a strong convexity modulus of $\alpha + \etat$, the $y$-proximal update in Line 3 of Algorithm \ref{alg:SPS} leads a three-point inequality (see Lemma 3.1 of \cite{LanBook}): 
{\skipdisplay
\begin{align*}
\inner{y^s - x}{\tilde{v}^s}& + u(y^s) - u(x) + 
\tfrac{1}{2}[(\beta_s+\alpha+\etat) \normsq{x - y^{s}} + \beta_s \normsq{y^s - y^{s-1}} - \beta_s \normsq{y^{s-1} -x }]\notag\\
&+ \tfrac{\etat}{2}(\normsq{y^s - \xtt} - \normsq{x - \xtt}) \leq 0. \notag
\end{align*}}
Equivalently, we have
\begin{align}
 \inner{y^s - x}{\tilde{v}^s}& + u(y^s) - u(x) + 
 \tfrac{1}{2}[(\beta_s + \alpha/2) \normsq{y - y^{s}} + \beta_s \normsq{y^s - y^{s-1}} - \beta_s \normsq{y^{s-1} -x }]\notag\\
 & +\tfrac{1}{2}[\etat \normsq{y^s - \xtt} +  (\etat + \alpha/2)\normsq{y^s - x} - \etat\normsq{x - \xtt} ] \leq 0.\label{p2:Q_X}
\end{align}
In particular, the definition of $\tilde{v}^s$ in Line 2 of Algorithm \ref{alg:SPS} implies
$$ 
\begin{aligned}
\inner{y^s - x}{\tilde{v}^s} = &\inner{y^s - x}{\sumi p^{s} \vit} - \inner{y^s - x}{\sumi (p_i^{s} - p_i^{s-1}) \vit} \\
&+ \delta_s \inner{y^{s-1} - x}{\sumi (p_i^{s-1} - p_i^{s-2}) \vit} + \delta_s \inner{y^{s} - y^{s-1}}{\sumi (p_i^{s-1} - p_i^{s-2}) \vit}, \forall s \geq 2,\\
\inner{y^1 - x}{\tilde{v}^1} = &\inner{y^1 - x}{\sumi p^{1} \vit} - \inner{y^1 - x}{\sumi (p_i^{1} - p_i^{0}) \vit} \\
&+ \delta_1 \inner{y^{0} - x}{\sumi (p_i^{0} - p_i^{-1}) \vitt} + \delta_1 \inner{y^{1} - y^{0}}{\sumi (p_i^{0} - p_i^{-1}) \vitt}.
\end{aligned}
$$
So, substituting them into \eqref{p2:Q_X}, summing up the resulting inequality with weight $q_s$, noting the step-sizes conditions in \eqref{req:SPS_phase}, and utilizing Young's inequality,  we get 
\begin{align}\label{p2:y_conv}
\begin{split}
 \sums &q_s \left(\La(y^s; p^s, \pi^t) - \La(x; p^s, \pi^t) + \tfrac{1}{2}[\etat \normsq{y^s - \xtt} +  (\etat + \alpha/2)\normsq{y^s - x} - \etat\normsq{x - \xtt}]\right)\\
 &+ q_1 \delta_1 \inner{y^{0} - x}{\sumi (p_i^{0} - p_i^{-1}) \vitt}-q_S  \inner{y^S - x}{\sumi (p_i^{S} - p_i^{S-1}) \vit}\\
 \leq& \tsum_{s=2}^{S} \tfrac{q_{s-1}\gams[s-1]}{2} \normsq{\pt[s-1] - \pt[s-2]}[U] + \tfrac{q_{1} \delta_1^2 \MttU^2 }{2\beta_1} \normsq{p^0 - p^{-1}}[U] \\
 &-\tfrac{1}{2}[q_S (\beta_S + \alpha/2) \normsq{y^S - x}  - q_1 \beta_1 \norm{y^0 - x}^2].
 \end{split} \end{align} 

\vgap
Next, consider the convergence of $p^s$. The $p$-proximal update in Line 4 of Algorithm \ref{alg:SPS} implies
\begin{equation*}
\La(y^s; p, \pi^t) - \La(y^s; p^s, \pi^t) + \gamma_s [U(p; p^{s}) + U(p^s; p^{s-1}) - U(p; \pt[s-1])] \leq 0.
\end{equation*}
Observing  the strong convexity of $U$ with respect to $\norm{\cdot}[U]$ and the stepsize conditions in \eqref{req:SPS_phase}, the $q_s$ weighted sum satisfies 
\begin{equation*}
\sums q_s[ \La(y^s; p, \pi^t) - \La(y^s; p^s, \pi^t)] + \gams[S] \qs[S] U(p; \pt[S]) + \sums \tfrac{\qs \gams}{2}  \normsq{\pt[s] - \pt[s-1]}[U] \leq q_1 \gams[1]U(p; \pt[0]).
 \end{equation*} 
Then, combining it with the $y$ convergence bound in \eqref{p2:y_conv}, we get 
\begin{align*}
 \begin{split}
 \sums& q_s \left(\La(y^s; p, \pi^t) - \La(x; p^s, \pi^t)  + \tfrac{ 1}{2} [{\eta_t} \normsq{y^s - \xtt} + (\eta_t + \alpha/2) \normsq{y^s - x} - \etat \normsq{\xtt -x}]\right)\\
     &+q_1 \delta_1 \inner{y^0 - x}{\sumi \vitt (p_i^0 - p_i^{-1})} - q_S\inner{y^{S} - x}{\sumi \vit (p_i^{S} - p_i^{S - 1})}\\ 
   \leq& q_1\gam_1 U(p; p^0) - q_S \gam_S [U(p;p^S) + \tfrac{1}{2}\normsq{p^S- p^{S-1}}[U]]  +\tfrac{q_1 \delta_1^2 \MttU^2}{2\beta_1} \normsq{p^{0} - p^{-1}}[U] \\
   &-\tfrac{1}{2}[q_S (\beta_S + \alpha/2) \normsq{y^S - x}  - q_1 \beta_1 \norm{y^0 - x}^2].
   \end{split}
\end{align*}
Moreover, since $\La(y^s; p, \pi^t)$, $\normsq{y^s - \xtt}$ and $\normsq{y^s - x}$ are convex with respect to $y^s$ and 
$\La(x; p^s, \pi^t)$ is linear with respect to $p^s$, the desired convergence bound \eqref{eq:SPS_phase_bound} can be derived using the Jensen's inequality.
\end{proof}

In the above proposition, an $\wt$-weighted sum of the terms related to the outer DRAO loop, $\xt$ and $\bar p^t$, in \eqref{eq:SPS_phase_bound} is the same\footnote{Except that $\alpha/2$, instead of $\alpha$, is regarded as the strong convexity modulus for the outer DRAO loop.} as the $Q_x$ and $Q_p$ convergence bound in the proof of Proposition \ref{pr:central_Q_conv}, (c.f. \eqref{eq:central_Qx_Qp}). So a convergence bound of $Q$ in the DRAO-S method can be deduced by plugging it into the proof of Proposition \ref{pr:central_Q_conv}, i.e, the analysis for the $Q$ convergence in the DRAO method.
\vgap
\begin{proposition}\label{pr:central-sliding-Q}
Let $\zt := \{\xt, \bar p^t, \pi^t\}$ be generated by the DRAO-S method with the outer DRAO loop stepsize satisfying  \eqref{eq:strong_convex_dual}, \eqref{eq:aggregate_stx_cvxit_cst} and \eqref{req:DRAO}, and the inner SPS loop stepsize satisfying \eqref{req:SPS_phase}. Let $\tilwt:= \wt / \tsum_{s=1}^{\St} q_s^t$ denote the effective summation weight for the inner SPS loops and let $\MtU$ be defined in \eqref{def:Mt-def}. Suppose the following inter-phase stepsize requirements for inner SPS loop hold for $t\geq 2$:
\begin{equation}\label{req:Q-sliding-req}
\begin{split}
&\tilwt \qts[1] (\delts[1])^2 \MtU[t-1]^2 \leq \tilwt[t-1] \betats[1] \qts[\nSt[t-1]][t-1] \gamts[\nSt[t-1]][t-1],\ 
\MtU[N]^2 \leq  \gamts[\nSt[N]][N] (\betats[\nSt[N]][N] + \alpha/2), \\
&\tilwt \qts[1] \betats[1]   \leq \tilwt[t-1] \qts[\nSt[t-1]][t-1] (\betats[\nSt[t-1]][t-1] + \alpha/2),\   
\tilwt \qts[1] \gamts[1] \leq \tilwt[t-1] \qts[\nSt[t-1]][t-1] \gamts[\nSt[t-1]][t-1], \\
& \tilwt \delts[1] \qts[1] = \tilwt[t-1] \qts[\nSt[t-1]][t-1].
\end{split}
\end{equation}
Then the following $Q$-convergence bound holds for any reference point $z:=(\xstar, p, \pi)$ and for all $N\geq 1$
\begin{equation}\label{eq:central-sliding-Q}
\begin{split}
\sumt \wt Q\left(\zt,z\right) & + \wt[N](\etat[N] + \alpha/2) \normsq{\xt[N] - \xstar}/2 \\
&\leq \wt[1]\taut[1] \sumi p_i \Dfistar(\pii; \piit[0]) + (\wt[1]\etat[1] +\tilwt[1]\qts[1][1]\betats[1][1]) \normsq{\xt[0] - \xstar}/2 + \tilwt[1]\qts[1][1]\gamts[1][1] \DP^2/2.
\end{split}
\end{equation}
The above bound also holds if the last condition in \eqref{req:Q-sliding-req} is replaced by 
\begin{equation}\label{req:Q-sliding-alt}
\gamts[\St] (\betats[\St] + \alpha/2) \geq \MtU^2,\ \betats[1] =0, \text{ and } \delts[1] = 0\ \forall t \geq 1 .\end{equation}
\end{proposition}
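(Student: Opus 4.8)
The plan is to piggyback on the exact analysis of Proposition~\ref{pr:central_Q_conv} by using the per-phase SPS estimate \eqref{eq:SPS_phase_bound} as a surrogate for the exact $\Q[x]+\Q[p]$ bound \eqref{eq:central_Qx_Qp}. The key observation is that, upon setting $x=\xstar$ and choosing the reference dual $p$, the leading term of \eqref{eq:SPS_phase_bound} is $\La(\xt; p, \pi^t) - \La(\xstar; \bar p^t, \pi^t) = \Q[x](\zt; z) + \Q[p](\zt; z)$ for $\zt = (\xt, \bar p^t, \pi^t)$; dividing \eqref{eq:SPS_phase_bound} by $\tsum_s \qts$ and multiplying by $\wt$ therefore reproduces, for each phase $t$, the weighted $\Q[x]+\Q[p]$ inequality of the DRAO proof together with momentum and Bregman-distance remainders, now with $\alpha/2$ acting as the outer strong-convexity modulus. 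Since the dual $\pii$-update in Line 3 of Algorithm~\ref{alg:cPD} is untouched by the sliding modification, the $\Q[\pi]$ estimate \eqref{eq:central_Qpii_bound} transfers verbatim (with $\xt,\xtt$ reinterpreted as the DRAO-S iterates). I would thus first sum a per-phase $\Q[x]+\Q[p]$ bound over $t=1,\dots,N$ and then add \eqref{eq:central_Qpii_bound} as in Proposition~\ref{pr:central_Q_conv}.

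Scaling \eqref{eq:SPS_phase_bound} by $\tilwt=\wt/\tsum_s\qts$ and summing over $t$, the proximal terms $\tfrac{\wt}{2}[\etat\normsq{\xt-\xtt} + (\etat+\alpha/2)\normsq{\xt-\xstar} - \etat\normsq{\xtt-\xstar}]$ telescope through $\wt\etat \le \wtt(\etatt+\alpha/2)$ (the analogue of \eqref{req:DRAO} with modulus $\alpha/2$), leaving the terminal $\tfrac{\wt[N](\etat[N]+\alpha/2)}{2}\normsq{\xt[N]-\xstar}$, the boundary $\tfrac{\wt[1]\etat[1]}{2}\normsq{\xt[0]-\xstar}$, and the negative terms $-\tfrac{\wt\etat}{2}\normsq{\xt-\xtt}$ retained to absorb the $\Q[\pi]$ cross terms later.

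The main obstacle is the inter-phase telescoping of the momentum inner products and Bregman-$U$ remainders, which is precisely what the conditions in \eqref{req:Q-sliding-req} are designed to effect. Using the DRAO-S initialization $y^0=\yt[t-1]$, $p^0=\ptltt$, $p^{-1}=\pdbtltt$ and $\vitt=\vit[t-1]$, the incoming momentum $\tilwt\delts[1]\qts[1]\inner{\yt[t-1]-\xstar}{\sumi\vit[t-1](p_i^{S_{t-1}}-p_i^{S_{t-1}-1})}$ at phase $t$ cancels the outgoing momentum of phase $t-1$ exactly when $\tilwt\delts[1]\qts[1]=\tilwt[t-1]\qts[\nSt[t-1]][t-1]$ (the last equality). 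The remaining carry-overs are all of telescoping type: the incoming $\tilwt\qts[1]\gamts[1]U(p;p^{S_{t-1}})$ is dominated by the outgoing $-\tilwt[t-1]\qts[\nSt[t-1]][t-1]\gamts[\nSt[t-1]][t-1]U(p;p^{S_{t-1}})$ via $\tilwt\qts[1]\gamts[1]\le\tilwt[t-1]\qts[\nSt[t-1]][t-1]\gamts[\nSt[t-1]][t-1]$ (using $U\ge 0$); the incoming $\tfrac{\tilwt\qts[1](\delts[1])^2\MtU[t-1]^2}{2\betats[1]}\normsq{p^{S_{t-1}}-p^{S_{t-1}-1}}[U]$ is absorbed by the outgoing $-\tfrac12\tilwt[t-1]\qts[\nSt[t-1]][t-1]\gamts[\nSt[t-1]][t-1]\normsq{p^{S_{t-1}}-p^{S_{t-1}-1}}[U]$ through the first inequality; and the $\normsq{y^0-\xstar}$/$\normsq{y^{S}-\xstar}$ pair telescopes through $\tilwt\qts[1]\betats[1]\le\tilwt[t-1]\qts[\nSt[t-1]][t-1](\betats[\nSt[t-1]][t-1]+\alpha/2)$. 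At $t=N$ the leftover outgoing momentum $-\tilwt[N]\qts[\nSt[N]][N]\inner{\yt[N]-\xstar}{\sumi\vit[N](p_i^{S_N}-p_i^{S_N-1})}$ is controlled by Cauchy--Schwarz (through the operator norm $\MtU[N]$) and Young's inequality and absorbed into the terminal $\normsq{\yt[N]-\xstar}$ and $\normsq{p^{S_N}-p^{S_N-1}}[U]$ terms, which is exactly what $\MtU[N]^2\le\gamts[\nSt[N]][N](\betats[\nSt[N]][N]+\alpha/2)$ (the second condition) guarantees; the remaining negative terms at $t=N$ are discarded. At $t=1$ the incoming momentum and the $\normsq{p^0-p^{-1}}[U]$ term vanish because $\vit[0]=\vit[1]$ and $\ptlt[0]=\pdbtlt[0]$, while $y^0=\xt[0]$ and $U(p;p^0)\le\DP^2/2$ furnish the right-hand boundary contributions $\tfrac{\tilwt[1]\qts[1][1]\betats[1][1]}{2}\normsq{\xt[0]-\xstar}$ and $\tfrac{\tilwt[1]\qts[1][1]\gamts[1][1]}{2}\DP^2$ in \eqref{eq:central-sliding-Q}.

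Finally, I add the summed $\Q[x]+\Q[p]$ bound to \eqref{eq:central_Qpii_bound}; the $\pi$-coupling inner products are cancelled against the retained $-\tfrac{\wt\etat}{2}\normsq{\xt-\xtt}$ terms using \eqref{eq:aggregate_stx_cvxit_cst} and Young's inequality, verbatim as in the closing step of Proposition~\ref{pr:central_Q_conv}, producing \eqref{eq:central-sliding-Q}. For the alternative requirement \eqref{req:Q-sliding-alt}, taking $\betats[1]=\delts[1]=0$ for all $t$ annihilates every incoming momentum and $\normsq{y^0-\xstar}$ term, so no inter-phase matching of these is needed; each phase's own outgoing momentum is instead absorbed within that phase by the same Young step, now enabled by $\gamts[\St](\betats[\St]+\alpha/2)\ge\MtU^2$, and the identical bound follows.
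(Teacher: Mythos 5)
Your proposal is correct and follows essentially the same route as the paper: treat \eqref{eq:SPS_phase_bound} as the per-phase surrogate for the $\Q[x]+\Q[p]$ bound \eqref{eq:central_Qx_Qp}, scale by $\tilwt$, telescope across phases via the conditions in \eqref{req:Q-sliding-req}, and then add the unchanged $\Q[\pi]$ estimate \eqref{eq:central_Qpii_bound} exactly as in Proposition~\ref{pr:central_Q_conv}. The paper's own proof states the telescoping only in summary form, whereas you correctly identify which condition of \eqref{req:Q-sliding-req} absorbs which carry-over term (momentum cancellation, the $U$-distance, the $\normsq{p^{S_{t-1}}-p^{S_{t-1}-1}}[U]$ remainder, the $\normsq{y^0-\xstar}$ pair, and the terminal Young step at $t=N$), so your write-up is a faithful, more detailed version of the same argument.
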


\begin{proof}
As pointed out above the proposition, dividing both sides of \eqref{eq:SPS_phase_bound} by $\tsum_{s=1}^{\St}\qts$, taking its $\wt$-weighted sum, and noting the choice of initialization points in \eqref{subroutine:SPS_sm} and the telescope cancellation resulting from the stepsize requirements \eqref{req:Q-sliding-req}, we get a  convergence bound of $Q_x$ and $Q_p$ given by
{\skipdisplay
\begin{equation}\label{eq:sliding-QxQp}
\begin{split}
\sumt &{\wt} [Q_x(\zt; z)  + Q_p(\zt; z)] + \sumt \tfrac{\wt}{2} \etat\normsq{\xt - \xtt} + \wt[N](\etat[N] + \alpha/2) \normsq{\xt[N] - \xstar}  \\
&\leq \tilwt[1]\qts[1][1](\gamts[1][1]U(p; \ptlt[0]) + \betats[1][1] \normsq{y^0 - \xstar}/2)+ \tfrac{\wt[1] \etat[1]}{2} \normsq{\xt[0] - \xstar}.
\end{split}
\end{equation}
}
The preceding bound is almost the same as its counterpart in Proposition \ref{pr:central_Q_conv}, i.e., \eqref{eq:central_Qx_Qp}. Moreover, since the generation of $\pi^t$ in the outer DRAO loop of the DRAO-S method is also the same as that of the DRAO method, the $Q_\pi$ convergence bound in Proposition \ref{pr:central_Q_conv} (c.f. \eqref{eq:central_Qpii_bound}) is also valid. The desired $Q$ convergence bound in  \eqref{eq:central-sliding-Q} then follows from  combining \eqref{eq:central_Qpii_bound} with \eqref{eq:sliding-QxQp}, and noting $U(p; \ptlt[0]) \leq \DP^2/2$ and $y^0 = x^0$. 

In addition, if the alternative stepsize requirement \eqref{req:Q-sliding-alt} is satisfied, \eqref{eq:SPS_phase_bound} can be simplified further to 
\begin{align*}
 \begin{split}
 (\tsum_{s=1}^{\St} q_s)& (\La(x^t; p, \pi^t) - \La(x; \bar{p}^t, \pi^t))
     +\tfrac{(\sums q_s)}{2} ({\eta_t} \normsq{\xt - \xtt} + (\eta_t + \alpha/2) \normsq{\xt - x} - \etat \normsq{\xtt -x})\\ 
   \leq& q_1\gam_1 U(p; p^0) - q_\St \gam_\St U(p;p^\St). \\
   \end{split}
\end{align*}
Then a similar argument would lead to the $Q$ convergence bound in \eqref{eq:central-sliding-Q} as well.
\end{proof}
\vgap

The next convergence proofs of the DRAO-S method for the smooth problem, i.e., Theorem \ref{thm:sps-sm} and Theorem \ref{thm:sps-sm-str}, are direct applications of Proposition \ref{pr:central-sliding-Q}.\\

\noindent{\bf Proof of Theorem~\ref{thm:sps-sm}}
It is easy to verify that the stepsize choice in \eqref{stp:sps_sm} satisfies the requirements in Proposition \ref{pr:central-sliding-Q}, thus the following convergence bound is valid for any reference point $z:=(\xstar; p, \pi)$ and for all $N \geq 1$,
\begin{equation}\label{thm-pf:sm-Q}
\sumt \wt Q\left(\zt,z\right) + \Lf \normsq{\xt[N] - \xstar} \leq  \Lf\Rx^2 + \DP\Rx / \Delta.\end{equation}
Let $\hpiiN = \grad \myfi(\xbart[N])$ and $\hpiN[] \in \argmax_{p \in P} \sumi p_i \myfi(\xbart[N])$  such that  $f(\xbarN)= \La(\xbarN; \hpN, {\hat \pi}^N)$ (see Lemma \ref{lm:duality}), then the desired convergence result in \eqref{eq:f_SPS_sm} can be deduced by choosing the reference point to be $(\xstar; \hp^N, \hat{\pi}^N)$.  Furthermore, the result in \eqref{eq:f_SPS_sm1} can be deduced by substituting in the specific choice of $\Delta$. 

Next, we show the boundedness of $\MtU$. Since $\pi^t = \grad \pmbf(\xundert)$ (c.f. \eqref{eq:smooth_alg2}), $\pmbf$ is smooth and $\xundert$ is a convex combination of $\xt[0]$ and $\{\xtilt\}$, the boundedness of $\norm{\pi^t}[2, U^*]$ follows from  the boundedness of $\xtilt$. 
 Setting the reference point to the saddle point $(\xstar, p^*, \pi^*)$ (c.f. Lemma \ref{lm:sad_ex}), we get from \eqref{thm-pf:sm-Q}
\begin{equation*}
 \Lf \normsq{\xt[N] - \xstar}  \leq 2 \Lf\Rx^2\ \forall N \geq 2.\end{equation*}
This shows that  $\xt$'s are restricted to be a bounded ball around $\xstar$. Since $\thetat \leq 1,$ the extrapolated sequence $\xtilt$'s are also restricted to a bounded ball around $\xstar$, implying the boundedness of $\{\MtU\}_{t=1}^\infty$.  
\endproof

\vgap
\noindent{\bf Proof of Theorem~\ref{thm:sps-sm-str}}
We can verify that the stepsize choice in \eqref{stp:sps_sm_str} satisfies the alternative requirements in Proposition \ref{pr:central-sliding-Q} (c.f. \eqref{req:Q-sliding-alt}). Setting the reference point $z$ to the saddle point $(\xstar, p^*, \pi^*)$ (c.f. Lemma \ref{lm:sad_ex}), we get from \eqref{eq:central-sliding-Q} the desired geometric convergence of $\xt[N]$ \eqref{eq:sps_x_sm_str_conv}, i.e., 
\begin{equation}\label{thm8:x_last_iterat_N}
\normsq{\xt[N] - \xstar} \leq \theta^N [(1 + 2\kappa)\normsq{\xt[0] - \xstar} + \tfrac{4 \DP^2 }{\eta \Delta\alpha }], \forall N \geq 1.\end{equation}
Observe that the above convergence bound also implies the boundedness of $\{\xt\}$. Thus the existence of an uniform bound for $\MtU$ follows from an argument similar to that of the proof of Theorem \ref{thm:sps-sm}. 

Next, we establish an upper bound on the total of inner iterations when $\Delta:= 2 \DP^2 / \eta \Lf \normsq{\xt[0]-\xstar}$ and  $\kappa$ is large. The specific choice of $\Delta$ allows us to simplify \eqref{thm8:x_last_iterat_N} further to 
$$\normsq{\xt[N] - \xstar} \leq \theta^N \left(5\kappa\Rx^2\right).$$
Let $\Nep$ denotes the least number of phases required to satisfy $\normsq{\xt[\Nep] -\xstar} \leq \ep$. Clearly, $\Nep=\bigO(\sqrt{\kappa} \log(1/\ep))$. Specifically, since  $\kappa\geq 1$ implies $1/\theta \leq 2$, we have $(1/\theta)^{\Nep} \leq 10\kappa\Rx/\ep.$

For the total inner iteration number, a bound for $\St$ are provided by the stepsizes requirement \eqref{stp:sps_sm_str}.  Since $\MtU \leq \Mtil$ and
$ \St \leq 1+ \sqrt{2 \wt \Delta} \Mtil,$
 the total number is upper bounded by 
\begin{align*}
\tsum_{t=1}^{\Nep} \St &\leq \Nep + \tsum_{t=1}^{\Nep} (\sqrt{\tfrac{1}{\theta}})^{t-1}\sqrt{\Delta} \Mtil = \Nep + \tfrac{(1/\theta)^{\Nep/2}-1}{\sqrt{1/\theta} - 1}\sqrt{\Delta}\Mtil\\
& \leq \Nep + \tfrac{1}{\sqrt{1/\theta} - 1} \tfrac{1}{\sqrt{\sqrt{8\kappa + 1} - 1}}\tfrac{\Mtil\DP}{\sqrt{\Lf} \Rt[0]} \tfrac{16\sqrt{\Lf} \Rx}{\alpha \sqrt{\ep}}
 \leq \Nep + \tfrac{64 \kappa^{1/4} \Mtil\DP}{\alpha \sqrt{\ep}}.
\end{align*}
The second last inequality follows from the algebraic fact $\sqrt{1 + l} - 1 \geq l/4 $ for $l\leq 1$, and 
\begin{align*}
\tfrac{1}{\sqrt{1/\theta} - 1}&\tfrac{1}{\sqrt{\sqrt{8\kappa + 1} - 1}} = \tfrac{1}{\sqrt{1 + 2/(\sqrt{8\kappa+1}-1)}-1}\tfrac{1}{\sqrt{\sqrt{8\kappa + 1} -1}} \leq 2(\sqrt{8\kappa +1} -1)\tfrac{1}{\sqrt{\sqrt{8\kappa + 1} - 1}}\\
&\leq 2 \sqrt{\sqrt{8\kappa + 1} - 1} \leq 4 \kappa^{1/4}.
\end{align*}
Thus the number of inner iterations, and hence the $P$ projection complexity, are upper bounded by $\bigO(\tfrac{\kappa^{1/4}\Mtil \DP}{\alpha\sqrt{\ep}} + \sqrt{\kappa} \log(\tfrac{1}{\ep}))$.
\endproof

\vgap

\noindent{\bf Proof of Theorem~\ref{thm:sps-ns}}
The proof is similar to that of Theorem \ref{thm:cen-non-smooth}. 
Let us first consider the non-strongly convex case.  Since $\bMtU \geq \MtU, \forall t$, the stepsize choice in \eqref{stp:drao-s-ns} satisfies all the requirements in Proposition \ref{pr:one-step-sliding} (c.f. \eqref{req:Q-sliding-req}). So substituting the stepsize choice into \eqref{eq:central-sliding-Q}, we obtain the following convergence bound of the $Q$ gap function for any reference point $z:=(\xstar, p, \pi)$:
\begin{equation}
  \sumt Q(\zt; z)   + \tfrac{\eta}{2} \normsq{\xt[N] - x} + \tfrac{\gam}{\Delta} U(p;\pt[N]) \leq (\tfrac{\eta}{2} + \tfrac{\beta}{2\Delta})\normsq{\xt[0] - x}+ \tfrac{\gam}{\Delta} U(p;\pt[0]) + \tau \sumi p_i V_i(\pii; \piit[0]).
  \end{equation} 
  The desired function value convergence bound \eqref{eq:sps-f-ns} follows immediately by selecting the reference point to be $(\xstar, \hat p^N, \hat \pi^N)$, where $\hpiiN = \grad \myfi(\xbart[N])$ and $\hpiN[] \in \argmax_{p \in P} \sumi p_i \myfi(\xbart[N])$.

The convergence bound \eqref{eq:sps-f-ns-str} for the strongly case also follows from substituting the stepsize choice in \eqref{stp:drao-sps-ns-str} into \eqref{eq:central-sliding-Q}.
\vgap
\endproof

\section{Lower Communication Complexities}

In this section, we establish theoretical lower bounds for distributed risk-averse optimization to show the communication complexities of both  DRAO and DRAO-S are not improvable.
Towards that end, we propose a distributed prox mapping (DPM) computing environment consisting of the following requirements and propose uniform lower bounds for all algorithms satisfying the requirement.
\begin{itemize}  
\item \textbf{Local memory}: the server node has a finite local memory $\calM_s$ and each worker node has a finite primal memory and a finite dual memory, $\calM_i$ and $\calM^\pi_{i}$ , respectively. In the beginning, the local memories contain only the trivial vector $0$, i.e., 
 $$\Mit[0] =\Mst[0]:= \{0\},\ \Mpiit[0] := \{0\}\ \forall i \in [m].$$
In one communication round, these local memories can be updated by both local computation and server-worker communication: 
$$\Mst[t+1] :=  \Mstcp\cup\Mstcm ,\ \Mit[t+1] := \Mitcp \cup \Mitcm,\ \Mpiit[t+1] := \Mpiitcp,\ \forall i \in [m],$$ 
\textblue{where $\Mstcp$ and $\Mpiitcp$  represent results from the local computation, and $\Mstcm$ and $\Mitcm$ denote the vector(s) communicated to the server and the $i$\ts{th} worker node, respectively.}
\item \textbf{Server-worker communication:} in one communication round, each worker can send  one vector from its local primal memory to the server: 
$$ \Mstcm :=\{y_i \in \Span(\Mit[t-1]), i \in [m]\},$$
and the server can share one vector from its memory with the worker: 
$$\Mitcm \in \Span(\Mst[t-1]).$$
\item \textbf{Local computations}: between communication rounds, each worker can query its dual prox mapping oracle and the $\Ai$-multiplication oracle\footnote{The oracle returns matrix vector multiplication result of the form $\Ai x$ and $\Aitr \pi$.} for $L\geq 0$ times.
{
\small
\begin{align}\label{lower:dual_prox}
&\Mitcp[t][i]:= \Mitcp[t][i][,L],\ \Mpiitcp := \Mpiitl[t][L] \text{\ where }\Mitcp[t][i][,0]:= \Mit[t-1],  \Mpiitl[t][0]:= \Mpiit[t-1]. \nonumber\\
&\textit{For } l = 1, 2, 3, \ldots, L: \nonumber \\
&\quad \ \Mitcp[t][i][,l] = \Mitcp[t][i][,l-1] \cup \{\Aitr \piit[t,l],\ \Aitr\piibar\},\ \Mpiitl[t][l] := \Mpiitl[t][l-1] \cup \{\piit[t,l], \Ai \bar x\} , \text{where } \ \bar{x} \in \Span(\Mitcp[t][i][,l-1]), \nonumber \\
&\quad  \ \ \  \piibar \in \Span(\Mpiitl[t][l-1]),\ \piit[t,l] \in \argmax_{\pii \in \Pii} \inner{\Ai \bar{x}}{\pii} - \fistar(\pii) - \tfrac{\tau}{2}\normsq{\pii - \bar{\pi}_i},\text{ for some } \tau\geq 0. 
\end{align}
}{
The server node can query its $u(x)$ prox mapping oracle for $L\geq 0$ times. 
\setlength{\abovedisplayskip}{2pt}
\setlength{\belowdisplayskip}{2pt}
\setlength{\abovedisplayshortskip}{2pt}
\setlength{\belowdisplayshortskip}{2pt}
\small
\begin{align*}
&  \Mstcp[t][s]:=\Mstcp[t][s][,L] \text{ where } \Mstcp[t][s][,0]:= \Mst[t-1]. \\
&\textit{For } l = 1, 2, 3, \ldots, L: \\
&\quad \Mstcp[t][s][,l] :=  \Mstcp[t][s][,l-1] \cup \{x_s^l\}, \text{ where } x_s^l := \argmin_{x \in X} u(x) + \tfrac{\eta}{2} \normsq{x - \bar x}, \bar{x} \in \Span(\Mstcp[t][s][,l-1])\ 
.
\end{align*}
}
\item \textbf{Output solution:} the output solution $\xt$ comes from local primal memories, 
{\skipdisplay$$\xt \in \Span((\cup_{i\in[m]} \Mit) \cup \Mst), t \geq 1.$$}
\end{itemize}

The only hard requirement for the DPM environment is that only one vector can be sent and received by each worker during one communication round.
Indeed, the computations supported by the DPM environment are quite strong in several aspects. First, it allows gradient evaluation of $\myfi$ since it is equivalent to the $\pii$-prox mapping (c.f. \eqref{lower:dual_prox}) with $\tau=0$, i.e.,
{ \skipdisplay
$$\piibar = \grad \myfi(\bar x) \Leftrightarrow \piibar \in \targmax_{\pii \in \Pii} \inner{\pii}{\bar x} - \fistar(\pii).$$}
Second, it allows a possibly large number of local computation steps to be performed between communications. This assumption of generous computing resource at each node helps us to focus on the communication bottleneck. 
Third, 
it allows the freedom to make an arbitrary selection from the span of the local memory for communication, computation, and outputting solutions. 
For example, it might appear that the DRAO method violates the requirement because of the  $(x, p)$-prox mapping in Line 5 of Algorithm \ref{alg:cPD}. However, if we let $(\xt,\hat{p}^t)$ be an optimal pair of saddle point solutions in the $(x,p)$-prox mapping step (c.f. \eqref{eq:saddle_subproblem}), the output $\xt$ can be written alternatively as
{\skipdisplay $$\xt \leftarrow \targmax_{x \in X} \etat\normsq{x - \xunder}/2 +u(x),$$}
where $\xunder := \xtt - \sumi \hat p^t_i \vit /\etat$ and $\xunder\in \Span(\Mst)$. So the $(x, p)$-prox mapping actually satisfies the above local computation requirement.
Moreover, the computation and communication of $f_i^*(\piit)$'s are unnecessary  because they are only used for generating $\hat p^t$. Since all other steps are directly supported, the DRAO method can be implemented on the DPM environment. Indeed, our setup implies 
that the desired $p$ can be obtained from any oracle when selecting $x$ (from the span of local memory of the server). This renders all communication and computation related to $p$ unnecessary. So the DRAO-S method, and, more generally, any distributed algorithm consisting of the $x$-prox mapping, the $\pi$-prox mapping, and some $p$ update can be implemented on the DPM environment. For simplicity, we will call an algorithm satisfying the DPM requirement a DPM algorithm for the rest of this section.


\begin{figure}[!htb]
\centering
          \begin{tikzpicture}[auto,node distance=8mm,>=latex]
            \tikzstyle{round}=[thick,draw=black,circle]
            \tikzstyle{rect}=[thick,draw=black,rectangle]
            \node[rect] (rt) at (0,0) {$\, \text{server} \, $};
            \node[round, below left= 0.7 of rt] (m1) {$f_{1}$};
            \node[round, below right=0.7 of rt] (m2) {$f_2$};
            \draw[-] (rt) -- (m1);
            \draw[-] (rt) -- (m2);
        \end{tikzpicture}
        \vspace{-2mm}
        \caption{Network topology of hard instances.}\label{fig:hard}
    \vspace{-7mm}
\end{figure}

Now we present some hard instances, inspired by \cite{nemirovsky1983problem,nesterov2003introductory,scaman19}, for all DPM algorithms. We first describe a network topology and a general result which will be used in all our constructions. As shown in Figure~\ref{fig:hard}, the problem has only two workers, node $1$ and node $2$. Let $\Ki$ denote the subspace with non-zero entries only in the first $i$ coordinates, $\Ki:=\{ x \in \R^n: x_j = 0\ \forall j > i\}$. We will construct $\myfi[1]$ and $\myfi[2]$ such that the iterate $\xt$ generated in $t$ communication rounds will be restricted to a certain $\Ki$. Towards that end, we call a hard problem 
 \textit{odd-even preserving} if the memories generated by any DPM algorithm satisfies
\begin{align}\label{ls:odd-even}
\begin{split}
& \Mit[0][1] \cup \Mit[0][2] \cup \Mst[0] \subset \Ki[2], \\
&\Mit[t-1][1] \subset  \Ki \Rightarrow \small\begin{cases} \Mitcp[t][1] \subset \Ki &\text{ $i\geq2$ even}\\ \Mitcp[t][1] \subset \Ki[i+1] &\text{ $i\geq2$ odd} \end{cases},\ 
\Mit[t-1][2] \subset \Ki \Rightarrow \small\begin{cases} \Mitcp[t][2] \subset \Ki[i+1] &\text{ $i\geq2$ even}\\ \Mitcp[t][2] \subset \Ki &\text{ $i\geq2$ odd} \end{cases},\\
&\Mit[t-1][s] \subset \Ki \Rightarrow \Mstcp \subset \Ki.
\end{split}
\end{align}
\textblue{This property stipulates that the progresses on the reachable subspace $\Ki$ are possible only on node $1$ or $2$ depending on if $i$ is odd or even, so that a large number of communication rounds between node $1$ and $2$ are necessary for a non-trivial solution.}
The next lemma formalizes such limited progress by a DPM algorithm.
\begin{lemma}\label{lm:even-odd}
If the odd-even preserving property \eqref{ls:odd-even} holds, the output solution $\xt$ generated by a DPM algorithm after $t$ communication rounds satisfy $\xt \subset \Ki[\ceil{t/2}+2].$
\end{lemma}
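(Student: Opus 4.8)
The plan is to track, for each node, the index of the smallest coordinate subspace $\Ki$ containing its local memory, and to show this ``reach'' grows by at most one coordinate roughly every two communication rounds. Concretely, I would define $\kappa_1^t,\kappa_2^t,\kappa_s^t$ to be the smallest indices $k$ with $\Mit[t][1]\subset\Ki[k]$, $\Mit[t][2]\subset\Ki[k]$ and $\Mst[t]\subset\Ki[k]$, respectively, and set $m_t:=\max(\kappa_1^t,\kappa_2^t,\kappa_s^t)$. Since the output obeys $\xt\in\Span((\cup_i\Mit)\cup\Mst)$, it suffices to prove $m_t\le\ceil{t/2}+2$.

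The first step is to convert the odd-even preserving property \eqref{ls:odd-even} together with the DPM memory-update rules into a recursion on $(\kappa_1^t,\kappa_2^t,\kappa_s^t)$. Using that a worker's new memory is spanned by its local computation and the single vector received from the server, and that the server's new memory is spanned by its computation and the two vectors received from the workers, \eqref{ls:odd-even} gives
\[
\kappa_1^t\le\max(\phi_1(\kappa_1^{t-1}),\kappa_s^{t-1}),\quad
\kappa_2^t\le\max(\phi_2(\kappa_2^{t-1}),\kappa_s^{t-1}),\quad
\kappa_s^t\le\max(\kappa_s^{t-1},\kappa_1^{t-1},\kappa_2^{t-1}),
\]
where $\phi_1(i)=i$ for even $i$ and $i+1$ for odd $i$, while $\phi_2(i)=i+1$ for even $i$ and $i$ for odd $i$. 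The structural fact I would extract is that $\phi_1$ always returns an even number and $\phi_2$ always an odd number, so a new \emph{even} reach can be created only by worker $1$ and a new \emph{odd} reach only by worker $2$, whereas the server and all communication merely copy existing reaches. Because the memories are nested in $t$, the reaches are non-decreasing, hence $m_t$ is non-decreasing; and the recursion immediately yields $m_t\le m_{t-1}+1$.

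The heart of the argument is the claim that two consecutive rounds cannot both strictly increase $m_t$: if $m_t=m_{t-1}+1$, then $m_{t+1}=m_t$. I would prove this by a parity case analysis. Suppose $m_t$ is odd (so $m_{t-1}$ is even); then the new maximum must have been produced by worker $2$, and the recursion forces $\kappa_1^t\le m_{t-1}$ and $\kappa_s^t\le m_{t-1}$. In the next round neither worker can pass $m_t$: worker $1$ sees reach $\le m_{t-1}$ with $\phi_1(m_{t-1})=m_{t-1}$, and worker $2$ holds the odd value $m_t$ with $\phi_2(m_t)=m_t$, so all three reaches at time $t+1$ remain $\le m_t$. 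The even case is symmetric with the roles of workers $1$ and $2$ exchanged. I expect this ``no two consecutive increases'' step to be the main obstacle, since it is precisely what captures the relay latency that forces a newly created coordinate to bounce from one worker, through the server, to the other worker before it can be extended.

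Finally, I would assemble the bound by counting. Starting from $m_0\le 2$ (the initialization $\Ki[2]$), the set of rounds at which $m$ strictly increases contains no two consecutive integers by the claim above, so at most $\ceil{t/2}$ of the rounds $1,\dots,t$ can be increases; hence $m_t\le 2+\ceil{t/2}$, which gives $\xt\subset\Ki[\ceil{t/2}+2]$. One minor technical point to dispatch cleanly is that the recursion holds with inequalities: since $\phi_1,\phi_2$ and $\max$ are monotone non-decreasing, the true reaches are dominated by the sequence defined by the same recursion with equality, so it is enough to verify the two claims for that dominating sequence (equivalently, one simply carries the inequalities through the case analysis, as done above).
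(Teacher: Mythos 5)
Your proposal is correct and is essentially the paper's own argument in dual form: the paper tracks the hitting times $t(i)$ of each coordinate and shows $t(i+1)\ge t(i)+2$ by tracing a newly created coordinate from one worker through the server to the other worker over two rounds, which is exactly your ``no two consecutive increases of $m_t$'' claim. The mechanism (parity determines which worker can extend the reach, and the server/communication steps only copy existing reaches) and the final count $\ceil{t/2}+2$ are identical, so this is the same proof with a slightly different bookkeeping device.
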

\begin{proof}
Let $\Mt:= \Span(\Mit[t][1]\cup\Mit[t][2]\cup\Mst)$, and let $t(i):= \min \{t \geq 0: \exists y \in \Memt, j \geq i\ s.t\ y_j \neq 0  \}$ denote the first time a vector with a non-zero $j \geq i$th index is generated. We develop a lower bound for $t(i).$

Consider an even $i> 2$.  By the definition of $t(i)$, $\Memt[t(i)-1] \subset \Ki[i-1].$  The odd-even preserving property then implies $\Mitcp[t(i)][2] \subset \Ki[i-1]$ and $\Mitcp[t(i)][s] \subset \Ki[i-1]$, so 
$\Mit[t(i)][1] \subset \Ki,\ \Mit[t(i)][s] \subset \Ki[i-1],\text{ and } \Mit[t(i)][2] \subset \Ki[i-1]$
after one communication round. Next, the odd-even preserving property again implies $\Mitcp[t(i)+1][1] \subset \Ki[i]$,  $\Mitcp[t(i)+1][2] \subset \Ki[i-1]$ and $\Mitcp[t(i)+1][s] \subset \Ki[i-1]$, so
$\Mit[t(i)+1][1] \subset \Ki,\ \Mit[t(i)+1][s] \subset \Ki[i],\ \text{and } \Mit[t(i)][2] \subset \Ki[i-1]$ after another communication round.
Therefore, we have $t(i+1) > t(i)+1$, i.e., $t(i+1) \geq t(i) + 2.$ The same recursive bound can also be obtained for an odd $i\geq 2$. In view of $t(2)\geq 0$, the largest non-zero index $i$ in $\Mt$ satisfies 
$t \geq t(i) \geq t(2) + 2i - 4 \geq 2i-4 $, thus $i \leq \ceil{t/2}+2. $
\end{proof}
\vgap

We are now ready to provide lower bounds under different problem settings. The next two results establish tight lower communication bounds for the smooth problem with a non-strongly convex $u(x)$ and a strongly convex $u(x)$, respectively.    
\begin{theorem}\label{thm:sm_lower}
Let $\Lf>0$, $\Rt[0] \geq 1$ and $\ep>0 $ be given. For a sufficiently large problem dimension, i.e., $n > 2 \ceil{\sqrt{\Lf}\Rt[0]/8\sqrt{\ep}}$, there exists a smooth hard problem of form \eqref{eq:orig_prob} with an aggregate smoothness constant $\Lf$ (c.f. \eqref{eq:smo_cst}), $\norm{\xt[0] - \xstar} \leq \Rt[0]$ such that any DPM algorithm takes at least $\Omega(\sqrt{\Lf}\Rt[0]/\sqrt{\ep})$ communication rounds to find an $\epsilon$-optimal solution.
\end{theorem}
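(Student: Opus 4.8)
The plan is to exhibit a two-worker instance built from Nesterov's worst-case quadratic, to verify that it fits the \emph{odd-even preserving} template \eqref{ls:odd-even} so that Lemma~\ref{lm:even-odd} applies, and then to translate the resulting subspace restriction into an $\Omega(\sqrt{\Lf}\Rt[0]/\sqrt{\ep})$ bound via the classical tail estimate. First I would construct the scenario costs. Let $A \in \R^{n\times n}$ be the tridiagonal matrix with $2$ on the diagonal and $-1$ on the off-diagonals, and split $A = M_1 + M_2$ into its ``odd'' and ``even'' $2\times 2$ blocks, so that $M_1$ couples the coordinate pairs $(1,2),(3,4),\dots$ and $M_2$ couples $(2,3),(4,5),\dots$. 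I set $f_1(x) = \tfrac{c}{2}x^\top M_1 x - \inner{e_1}{x}$ and $f_2(x) = \tfrac{c}{2} x^\top M_2 x$, take $u \equiv 0$ with $X = \R^n$ (the convex case $\alpha=0$), and choose $(\rho^*, P)$ so that the aggregate objective is $\tfrac12(f_1+f_2)$; the singleton $P=\{(\tfrac12,\tfrac12)\}$ is the simplest legitimate choice, since the family \eqref{eq:orig_prob} subsumes the risk-neutral one and a lower bound for a subclass is a lower bound for the whole class. The scalar $c$ is then fixed by requiring the aggregate smoothness constant in \eqref{eq:smo_cst} to equal $\Lf$, using $\lambda_{\max}(A) < 4$. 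With $x^0 = 0$ the minimizer is the usual $\xstar_i = 1 - i/(n+1)$, and a final rescaling of the variables enforces $\norm{\xt[0] - \xstar} \le \Rt[0]$.

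Next I would verify the odd-even preserving property. Because each $\pii$-prox mapping and each $\Aitr$-multiplication on worker $i$ only involves $\nabla f_i$, whose block structure couples a coordinate solely to its partner inside an $M_i$-block, a nonzero entry in coordinate $j$ can propagate to coordinate $j+1$ only through worker $1$ when $j$ is odd and only through worker $2$ when $j$ is even, while the server's $u$-prox with $u\equiv 0$ leaves the reachable subspace unchanged. This is exactly the chain of implications in \eqref{ls:odd-even}, with the initial inclusion $\subset \Ki[2]$ holding since the DPM memories start at $\{0\}$ and the only linear term $-\inner{e_1}{x}$ lives in $\Ki[1]$. Care is needed here: one must check that the prox oracle \eqref{lower:dual_prox}, not merely the gradient, cannot leak further coordinates, which follows because the prox subproblem is itself a quadratic whose coupling is inherited from $M_i$. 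Granting \eqref{ls:odd-even}, Lemma~\ref{lm:even-odd} gives $\xt \in \Ki[\ceil{t/2}+2]$ after $t$ communication rounds.

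Finally I would convert this into the complexity bound. For any $x$ supported on the first $k$ coordinates, the standard Nesterov tail estimate yields $f(x) - f(\xstar) \ge c_1\,\Lf \Rt[0]^2/(k+1)^2$ for an absolute constant $c_1$, provided $n$ exceeds $k$ so that the unreached tail is genuinely active; this is precisely where the dimension hypothesis $n > 2\ceil{\sqrt{\Lf}\Rt[0]/(8\sqrt{\ep})}$ is consumed. Substituting $k = \ceil{t/2}+2$ from Lemma~\ref{lm:even-odd} and demanding $f(\xt) - f(\xstar) \le \ep$ forces $\ceil{t/2}+2 = \Omega(\sqrt{\Lf}\Rt[0]/\sqrt{\ep})$, hence $t = \Omega(\sqrt{\Lf}\Rt[0]/\sqrt{\ep})$, which is the claim. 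I expect the main obstacle to be the bookkeeping in the second step: rigorously confirming that the DPM oracles (prox together with $\Aitr$-multiplication), rather than plain gradients, respect \eqref{ls:odd-even}, while simultaneously pinning down the scaling constants so that $\Lf$, $\Rt[0]$, and the residual estimate line up cleanly.
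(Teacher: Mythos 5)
Your proposal is correct and follows essentially the same route as the paper's proof: a two-worker odd/even block splitting of Nesterov's worst-case quadratic, verification of the odd-even preserving property \eqref{ls:odd-even} for the full prox oracle (the paper handles the $\tau>0$ case by rewriting the dual prox as a primal prox via Lemma~\ref{lm:app-pd}, exactly the decoupling you sketch), then Lemma~\ref{lm:even-odd} followed by the classical tail estimate and the same parameter scaling $\gamma \sim \Rt[0]/\sqrt{k+1}$, $k \sim \sqrt{\Lf}\Rt[0]/\sqrt{\ep}$. The only substantive deviation is your singleton $P=\{(1/2,1/2)\}$ versus the paper's $P=\Delta_2^+$ (with the linear term $-2\gamma x_1$ placed in \emph{both} $f_1$ and $f_2$ so that $p^*=(1/2,1/2)$ solves the inner maximization); your choice still satisfies the theorem as literally stated, since a singleton is a legal ambiguity set under which \eqref{eq:smo_cst} reduces to $L_{f,\bar p}$, but it collapses the instance to the known risk-neutral hard case, whereas the paper's nontrivial $P$ is what underpins its follow-up remark that the constant $\Lf$ over a genuine ambiguity set can strictly dominate $L_{f,\bar p}$.
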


\begin{proof}
 Consider the following hard problem parameterized by $\beta\geq 0, \gamma \geq 0$ and $k\geq 4$, 
\begin{align}\label{sm_l:prob}
\begin{split}
&f(x) := \tmax_{p \in \Delta_2^+}p_1 f_1(x) + p_2 f_2(x) + u(x) \text{ with } X = \R^{2k+1},\ u(x) = 0,\\ 
&f_1(x):= \tfrac{\beta}{2} [2 \tsum_{i=1}^{k} (x_{2i-1} - x_{2i})^2 + x_1^2 + x_{2k +1}^2 - 2 \gamma x_1], \\
&f_2(x):= \tfrac{\beta}{2} [2 \tsum_{i=1}^{k} (x_{2i} - x_{2i+1})^2 + x_1^2 + x_{2k +1}^2 - 2 \gamma x_1]. \\
\end{split}
\end{align}
Its aggregate smoothness constant  $\Lfbar$ (c.f. \eqref{eq:smo_cst}) satisfies $\Lfbar \leq 6 \beta$, and its optimal solution $(\xstar, \pstar)$ satisfies
$$p^* = [\tfrac{1}{2}, \tfrac{1}{2}], \ \xstar_i = \gamma (1 - \tfrac{i}{2k+2})\ \forall i \leq 2k+1, \text{ s.t.  } \norm{\xt[0] -\xstar} \leq \gamma \sqrt{k+1} \ \text{ and } \fstar = -\tfrac{\beta\gamma^2}{2}  [1 - \tfrac{1}{2k+2}]. $$
Their optimality can be verified with the first order conditions: 
{\skipdisplay $$0 = \grad (\tfrac{1}{2}f_1 + \tfrac{1}{2}f_2)(\xstar) \text{ and } [1/2, 1/2] \in \targmax_{p \in \Delta_2^+} p_1 f_1(\xstar) + p_2 f_2(\xstar).$$}
The even-odd preserving property holds for \eqref{sm_l:prob}. To see this, consider the worker node $\myfi[1]$. Let an even $i \geq 2$ be given and assume $\Mit[t-1][1] \subset \Ki$, i.e., $\Mitcp[t][1][,0] \subset \Ki$. 
Because $\Ai = I$, the update rule in \eqref{lower:dual_prox} imply that $\Mpiitl[t][0][1] \subset \Ki$. We show $\Mitcp[t][1][,l] \cup \Mpiitl[t][l][1] \subset \Ki$ for all $l \geq 0 $ by induction. Clearly, the statement holds for $l=0$. If $\Mitcp[t][1][,l-1] \cup \Mpiitl[t][l-1][1] \subset \Ki$,  $\bar x$ and $\piibar$ chosen in \eqref{lower:dual_prox} must be in $\Ki$. As for the $\pii$-prox mapping, if $\tau =0$, $\piit[t,l][1] := \grad \myfi[1](\bar x) \subset \Ki.$ If $\tau>0$, Lemma \ref{lm:app-pd} in the appendix allows us to write $\piit[t,l][1]$ as
{\skipdisplay
$$\piit[t,l][1] = \piibar[1] + \tfrac{1}{\tau} (\bar x -\uy), \text{ where } \uy \leftarrow \targmin_{y} \myfi[1](y) + \tfrac{1}{2\tau} \normsq{\bar x + \tau \piibar[1] - y}.$$}
In particular, $\uy \in \Ki$ because
{
\skipdisplay
\begin{align*}
\uy =\targmin_{x\in \R^{2k+1}} &\ \tfrac{\beta}{2} [2 \tsum_{j=1}^{i/2} (x_{2j-1} - x_{2j})^2 + x_1^2  - 2 \gamma x_1] + \tfrac{1}{2\tau} \tsum_{j=1}^{i}\normsq{\bar x_j + \tau \bar{\pi}_{1,j} - x_j} \\
&+  \tfrac{\beta}{2} x_{2k +1}^2 + \tfrac{1}{2\tau} \tsum_{j=i+1}^{2k+1}\normsq{ x_j}.\end{align*}}
So $\piit[t,l][1] \in \Ki$ also holds for $\tau > 0$. Thus the principle of induction implies that $\Mitcp[t][1][,L] \cup \Mpiitl[t][L][1] \subset \Ki, \forall L \geq 0$, i.e., $\Mitcp[t][1] \subset \Ki$. 
In addition, when $i \geq 2$ is odd and  $\Mit[t-1][1] \subset \Ki$, we have $\Mit[t-1][1] \subset \Ki[i] \subset \Ki[i+1]$. Since $i+1$ is even, the preceding result implies that $\Mitcp[t][1] \subset \Ki[i+1]$.
A similar result can also be derived for the worker $\myfi[2]$ for both even and odd $i\geq 2$. Therefore, problem \eqref{sm_l:prob} satisfies the even-odd preserving property.

Applying Lemma \ref{lm:even-odd}, the output solution $\xt[k]$ from any DPM algorithm in $ k$ communication rounds must satisfy $\xt[k] \in \Ki[k].$ In particular, let $\bar{f} := (f_1 + f_2)/2$ denote a lower bound for $f$. Then 
$f(\xt[k]) \geq \min_{x \in \Ki[k]} f(x) \geq \min_{x \in \Ki[k]} \bar f(x) = -\tfrac{\beta \gamma^2}{2} [1 - \tfrac{1}{k+1}].$

Now we set the parameters in \eqref{sm_l:prob} to obtain the desired lower bound. If $\ep \geq \Lf \Rx^2/4096$, $\Omega(\sqrt{\Lf}\Rt[0]/\sqrt{\ep}) = \Omega(1)$, so the lower bound clearly hold. Otherwise, we set $\beta:= \Lf/6$, $\gamma := \Rt[0] /\sqrt{k+1}$ and $k:=\ceil{\sqrt{\Lf}\Rt[0]/8\sqrt{\ep}} $ such that \eqref{sm_l:prob} is $\Lf$-smooth (c.f \eqref{eq:smo_cst}) with $\norm{\xt[0] - \xstar} \leq \Rt[0]$ and $k \geq 4$. A solution $\xt[k]$ generated by any DPM algorithm in $k$ communication rounds satisfy 
$f(\xt[k]) - f_* \geq  \tfrac{\gamma^2 \beta}{4k+4} \geq \ep.$
Thus they imply the desired  $\Omega(\sqrt{\Lf}\Rt[0]/\sqrt{\ep})$ lower communication complexity bound when the problem dimension is $2\ceil{\sqrt{\Lf}\Rt[0]/8\sqrt{\ep}} +1$.
\end{proof}
\vgap
We remark here that the above risk-averse lower  bound is the same as the risk-neutral lower bound of $\Omega(\sqrt{\Lfp[\bar p]}\Rx/\sqrt{\ep})$, developed in \cite{scaman19}, if $P$ is a singleton set of the empirical distribution, $P=\{\bar p:= (1/m,...,1/m)\}$, and $\Lfp[\bar p]$ denotes the aggregate smoothness constant (c.f. \eqref{eq:smo_cst}) associated with $\bar p$. But, other than the intuition that the risk-averse problem should be harder than the risk-neutral problem, the latter bound offers limited insights. Our risk-averse lower bound  can be larger than the risk-neutral lower bound  because 
the aggregate smoothness constant $\Lf$ (c.f. \eqref{eq:smo_cst}) defined over a non-trivial $P$ can be significantly larger than  $\Lfp[\bar p]$.  For example, consider an expanded version of \eqref{sm_l:prob} constructed by adding $(m-2)$ additional workers with constant local cost functions, 
$\myfi(x) \equiv C\  \text{for some  $C < \fstar$,}$
 and by setting $P$ to the m-dimensional simplex $\Delta_m^+$. 
The same argument as above will lead to the same lower bound  of $\Omega(\sqrt{\Lf}\Rx/\sqrt{\ep})$ for the expanded problem. However, because the smoothness constants of $\{\myfi\}_{i=3}^m$ are zero, we have $\Lfp[\bar p] \leq 2 \Lf /m << \Lf.$
\vgap

\begin{theorem}
Let $\Lf>8\alpha>0$ and $\ep>0$ be given. There exists an infinite-dimensional smooth problem of form \eqref{eq:orig_prob} with an aggregate smoothness constant $\Lf$ (c.f. \eqref{eq:smo_cst}) and a strong convexity modulus $\alpha$ such that any DPM algorithm requires at least $\Omega(\sqrt{\Lf/\alpha}\log(1/\ep))$\footnote{ We ignore the problem parameter $\Rx$ inside the $\log$.} communication rounds to find an $\ep$-close solution, i.e., $x$ such that  $\normsq{x - \xstar} \leq \ep$.
\end{theorem}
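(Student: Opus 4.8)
The plan is to lift the hard instance \eqref{sm_l:prob} to an infinite-dimensional Nesterov-type chain and attach the strong convexity to the server, so that the odd-even bookkeeping of Lemma~\ref{lm:even-odd} combines with the geometric decay of the optimal solution to force $\Omega(\sqrt{\Lf/\alpha}\log(1/\ep))$ rounds. Concretely, I would take $X=\ell^2$, set $u(x)=\tfrac{\alpha}{2}\normsq{x}$ on the server (supplying the modulus $\alpha$), and keep the same odd/even bond splitting as in \eqref{sm_l:prob} but without the cutoff $k$:
\begin{align*}
&f_1(x):=\beta\tsum_{i\geq1}(x_{2i-1}-x_{2i})^2+\tfrac{\beta}{2}x_1^2-\beta\gamma x_1,\\
&f_2(x):=\beta\tsum_{i\geq1}(x_{2i}-x_{2i+1})^2+\tfrac{\beta}{2}x_1^2-\beta\gamma x_1 .
\end{align*}
The scale $\beta$ is fixed by requiring the aggregate smoothness constant $\Lf=\max_{p\in\Delta_2^+}L_p$ (c.f. \eqref{eq:smo_cst}) to equal the prescribed value; since each worker Hessian is $\beta$ times a block operator of spectral radius at most $4$, this gives $\beta=\Theta(\Lf)$, hence a chain condition number $\Theta(\Lf/\alpha)=\Theta(\kappa)$ with $\kappa:=\Lf/\alpha$.

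I would next locate the min-max optimum $\xstar$. Since $\max_{p\in\Delta_2^+}\{p_1f_1+p_2f_2\}=\max(f_1,f_2)$, the minimizer of $f=\max(f_1,f_2)+u$ cannot sit in the interior of either region $\{f_1>f_2\}$ or $\{f_2>f_1\}$: the unconstrained minimizers of $f_1+u$ and of $f_2+u$ decouple into isolated coordinate pairs, are supported on at most the first two coordinates, and a direct check shows each violates its own defining inequality. Hence $\xstar$ lies on the kink $\{f_1=f_2\}$, where the active combination $p^\star f_1+(1-p^\star)f_2$ couples all consecutive coordinates into a single chain. Solving the associated second-order linear recurrence $(\beta A+\alpha I)\xstar=\beta\gamma e_1$ (with the two-periodic bond weights $p^\star,1-p^\star$ determined by imposing $f_1(\xstar)=f_2(\xstar)$) produces a geometrically decaying solution whose tail ratio $q\in(0,1)$ satisfies $\log(1/q)=\Theta(1/\sqrt{\kappa})$; the hypothesis $\Lf>8\alpha$ keeps $q$ real and bounded away from $1$.

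The odd-even verification is then inherited almost verbatim from the proof of Theorem~\ref{thm:sm_lower}: $f_1$ couples only the pairs $(x_{2i-1},x_{2i})$ and $f_2$ only the pairs $(x_{2i},x_{2i+1})$, while the separable $u$ has diagonal Hessian and leaves each $\Ki$ invariant, so the dual-prox oracle \eqref{lower:dual_prox} (rewritten through Lemma~\ref{lm:app-pd} when $\tau>0$) advances the reachable subspace by at most one coordinate per worker activation. Thus the instance is odd-even preserving and Lemma~\ref{lm:even-odd} yields $\xt\in\Ki[\ceil{t/2}+2]$ after $t$ communication rounds. Restricting to the geometric tail gives, for $m=\ceil{t/2}+2$,
\[
\normsq{\xt-\xstar}\ \geq\ \tsum_{j>m}(\xstar_j)^2\ =\ \Omega\!\left(q^{\,2m}\,\normsq{\xt[0]-\xstar}\right)\ =\ \Omega\!\left(q^{\,t}\,\normsq{\xt[0]-\xstar}\right),
\]
so achieving $\normsq{\xt-\xstar}\le\ep$ forces $t\geq \Omega\!\big(\log(1/\ep)/\log(1/q)\big)=\Omega(\sqrt{\kappa}\log(1/\ep))=\Omega(\sqrt{\Lf/\alpha}\log(1/\ep))$, as claimed.

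I expect the main obstacle to be the kink analysis in the second paragraph: identifying the optimal dual weight $p^\star$ from $f_1(\xstar)=f_2(\xstar)$, solving the resulting two-periodic recurrence, and certifying the quantitative decay $\log(1/q)=\Theta(1/\sqrt{\kappa})$ uniformly in the regime $\Lf>8\alpha$; simultaneously one must confirm that the aggregate smoothness $\max_p L_p$ (not merely the symmetric-point smoothness) matches the target $\Lf$. Everything else---the odd-even bookkeeping and the tail lower bound---follows the finite-dimensional template of Theorem~\ref{thm:sm_lower} with only cosmetic changes.
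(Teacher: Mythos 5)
Your overall architecture matches the paper's: an infinite-dimensional two-worker instance whose quadratic parts couple alternately the pairs $(x_{2i-1},x_{2i})$ and $(x_{2i},x_{2i+1})$, the odd-even preserving property plus Lemma~\ref{lm:even-odd} to confine $\xt$ to $\Ki[\ceil{t/2}+2]$, and a geometrically decaying $\xstar$ whose tail forces $\Omega(\sqrt{\kappa}\log(1/\ep))$ rounds with $\kappa=\Lf/\alpha$. The gap is exactly the step you yourself flag as ``the main obstacle'': your $f_1$ and $f_2$ are perfectly symmetric (identical linear and $x_1^2$ terms), so the saddle point $(\xstar,p^\star)$ is only implicitly defined by the coupled system $p^\star\grad f_1(\xstar)+(1-p^\star)\grad f_2(\xstar)+\alpha\xstar=0$ together with the kink condition $f_1(\xstar)=f_2(\xstar)$, and you never exhibit its solution. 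This matters quantitatively: for a geometric profile $x_i\sim q^i$ one has $\tsum_{i}(x_{2i-1}-x_{2i})^2=q^{-2}\tsum_{i}(x_{2i}-x_{2i+1})^2$, so the symmetric weight $p=1/2$ does \emph{not} place the minimizer of the averaged chain on the kink; you must move $p^\star$ away from $1/2$, which makes the bond weights two-periodic, turns the decay into a period-two modulated geometric sequence, and obliges you to certify both that a valid $p^\star\in(0,1)$ bounded away from the endpoints exists (otherwise the weak bonds accelerate the decay and destroy the rate $\log(1/q)=\Theta(1/\sqrt{\kappa})$) and that the resulting rate is still $1-\Theta(1/\sqrt{\kappa})$ uniformly for $\Lf>8\alpha$. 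None of this is carried out, and without it the final inequality $\normsq{\xt-\xstar}\geq q^{t}\normsq{\xt[0]-\xstar}$ has no certified $q$.

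The paper avoids the kink analysis entirely by a different design choice: it perturbs only the $(1,1)$ entries of the two quadratic forms, and does so \emph{asymmetrically} --- $\tfrac{3+5\gamma}{4+4\gamma}$ for worker $1$ and $\tfrac{5+3\gamma}{4+4\gamma}$ for worker $2$, with $\gamma=\alpha/\beta$ --- so that the explicit point $\xstar_i=(\tfrac{1-\sqrt{\gamma}}{1+\sqrt{\gamma}})^i$ together with $p^*=(1/2,1/2)$ satisfies both first-order conditions by direct substitution: the average $\tfrac12(A_1+A_2)$ is the standard Nesterov chain (giving the closed-form geometric minimizer), and the corner perturbation is tuned precisely so that $f_1(\xstar)=f_2(\xstar)$ holds at that point. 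If you wish to keep your symmetric construction you must solve the two-periodic recurrence and the kink equation in full; the far simpler repair is to adopt the paper's asymmetric corner terms, after which your odd-even bookkeeping and tail lower bound go through essentially verbatim.
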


\begin{proof}
Again we prove the result by construction. Consider the following infinite dimensional problem parameterized by $\beta > 2\alpha$ 
 \begin{align}\label{smst_l:prob}
\begin{split}
&f(x) := \tmax_{p \in \Delta_2^+}p_1 f_1(x) + p_2 f_2(x) + u(x) \text{ where} \  X := \R^{\infty} 
\\
&u(x) := \tfrac{\alpha}{2} \normsq{x}, f_1(x):= \tfrac{\beta-\alpha}{4}[x ^ \top A_1 x - 2 x_1],\  f_2(x):= \tfrac{\beta-\alpha}{4}[x ^ \top A_2 x - 2 x_1],\\
&A_1 := \scriptsize\begin{bmatrix}
\tfrac{3 + 5\gamma}{4 + 4\gamma } & -1 \\
 -1 & 1 \\
& & 1 &-1 \\
& & -1 & 1 \\
& & & & 1 &-1 \\
& & & & -1 & 1 \\
&&&&&&\ddots
\end{bmatrix},\ 
A_2 := \scriptsize\begin{bmatrix}
\tfrac{5 + 3\gamma}{4 + 4\gamma } &  \\
 & 1 &-1 \\
 & -1 & 1 \\
 & & & 1 &-1 \\
 & & & -1 & 1 \\
&&&&&\ddots\\
&&&&&&\ddots
\end{bmatrix},\ \gamma =\tfrac{\alpha}{\beta}.
\end{split}
\end{align}
Clearly, the aggregate smoothness constant (c.f. \eqref{eq:smo_cst}) of the problem is bounded by $\beta - \alpha$ and its strong convexity modulus is $\alpha$.
The optimal solutions are given by $\pstar = (1/2, 1/2)$ and $\xstar$, with $\xstar_i = (\tfrac{1-\sqrt{\gamma}}{1 + \sqrt\gamma})^i\ \forall i \geq 1$, since they satisfy the first order optimality conditions:{\skipdisplay
$$\grad (\tfrac{1}{2} f_1 + \tfrac{1}{2} f_2 + u)(\xstar) = 0 \text{ and } (\tfrac{1}{2}, \tfrac{1}{2}) \in \targmax_{p \in \Delta^+_2} p_1 f_1(\xstar) + p_2 f_2(\xstar).$$}
Moreover, similar to Theorem \ref{thm:sm_lower}, the alternating block diagonal structure of $A_1$ and $A_2$ implies the even-odd preserving property, so $\xt[k]$ generated by any DPM algorithm in $ k \geq 4 $ communications rounds satisfy $\xt[k] \subset \Ki[k],$ i.e., 
\begin{equation}\label{lower-pf}\begin{split}
\normsq{\xt[k] - \xstar} &\geq \tsum_{i=k+1}^{\infty} (\xstar_i)^2 = (\tfrac{1-\sqrt{\gamma}}{1 + \sqrt\gamma})^{2k} (\tfrac{1-\sqrt{\gamma}}{1 + \sqrt\gamma})^2/(1 - (\tfrac{1-\sqrt{\gamma}}{1 + \sqrt\gamma})^2) = (\tfrac{1-\sqrt{\gamma}}{1 + \sqrt\gamma})^{2k} \Rt[0]^2\\
&= (1 - \tfrac{2\sqrt{\gamma}}{1+\sqrt{\gamma}})^{2k} \Rt[0]^2 \geq (1 - {2\sqrt{\gamma}})^{2k} \Rt[0]^2 = (1 - 2\sqrt{\gamma})^{2k} \Rt[0]^2 \geq (1 - {2\sqrt{\alpha/(\beta-\alpha)}})^{2k} \Rt[0]^2,
\end{split}
\end{equation}
where $\Rt[0]^2 := (\tfrac{1-\sqrt{\gamma}}{1 + \sqrt\gamma})^2/(1 - (\tfrac{1-\sqrt{\gamma}}{1 + \sqrt\gamma})^2)$. Thus it takes at least $\Omega(\sqrt{(\beta-\alpha)/\alpha} \log(1/\ep))$ communication rounds to obtain an $x$ with $\normsq{x - \xstar} \leq \ep$. 

Now we select the parameter $\beta$ to derive the lower complexity bound.
If $\ep \geq (1 - {2\sqrt{1/\kappa}})^{8} \Rt[0]^2$ such that $\Omega(\sqrt{\Lf/\alpha}\log(1/\ep)) = \Omega(1)$, the desired lower bound clearly holds.
Otherwise, we can 
set $\beta:=\Lf + \alpha$ such that  the hard problem \eqref{smst_l:prob} is $\Lf$-smooth,  and the desired lower communication bound of  $\Omega(\sqrt{\Lf/\alpha} \log(1/\ep))$ follows from \eqref{lower-pf}.
\end{proof}
\vgap
We remark here that a finite dimensional hard problem can also be obtained by modifying \eqref{smst_l:prob} according to \cite{LanZhou18RPDG}.  Next, we move on to consider the structured non-smooth problem. 
\begin{theorem}
Let $\MA>0$, $\DPi>0$ $\Rt[0] \geq 1$ and  $\ep >0$ be given. When the problem dimension $n$ is sufficiently large (specified below), there exists a structured non-smooth problem $f$ of form \eqref{eq:orig_prob} with  $\MA \geq \max_{i \in [m]} \norm{\Ai}[2, 2]$, $\DPi \geq \max_{i \in [m]}\max_{\pii, \piibar\in \Pii} \norm{\pii- \piibar}$ (c.f. \eqref{eq:MA_def}) and $\Rt[0] \geq\norm{\xt[0] - \xstar} $ such that the following communication lower bounds hold. 
{\skipdisplay
\begin{enumerate}
\item[a)] When $u(x)$ is convex and $n > 2\ceil{\DPi \MA \Rt[0]/96 \ep} $, any DPM algorithm requires at least $\Omega(\MA\DPi \Rt[0]/\ep)$ communication rounds to find an $\ep$-optimal solution.
\item[b)] When $u(x)$ is $\alpha>0$ strongly convex and $n > 2\ceil{\DPi \MA/48\sqrt{\alpha \ep}}$, any DPM algorithm requires  at least $\Omega(\MA\DPi /\sqrt{\ep\alpha})$ communication rounds to find an $\ep$-optimal solution.
\end{enumerate}
}
\end{theorem}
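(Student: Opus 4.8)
My plan is to build two structured non-smooth hard instances paralleling the smooth constructions \eqref{sm_l:prob} and \eqref{smst_l:prob}, replacing each squared coupling term by an absolute-value coupling so that the same odd-even mechanism and Lemma \ref{lm:even-odd} still apply. I keep the two-worker topology of Figure \ref{fig:hard} and $P=\Delta_2^+$, and take $A_1$ to be the difference operator on odd-even coordinate pairs (rows $e_{2j-1}-e_{2j}$, plus a boundary row on the first coordinate) and $A_2$ the difference operator on even-odd pairs (rows $e_{2j}-e_{2j+1}$), each scaled so that $\norm{A_i}[2,2]=\MA$. I let each $\Pii$ be the $\ell_1$-ball of radius $\DPi/2$, so its Euclidean diameter is exactly $\DPi$ (matching \eqref{eq:MA_def}), and take $\fistar$ to be the linear functional encoding the boundary value, so that $f_i(x)=\max_{\pii\in\Pii}\inner{A_i x}{\pii}-\fistar(\pii)$ is a scaled maximum of consecutive-coordinate differences. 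Averaging the two workers at $p=(\tfrac12,\tfrac12)$ then dominates a single chain of differences, and I would record $\xstar$, $\pstar=(\tfrac12,\tfrac12)$, and $f_*$ and verify the first-order saddle conditions exactly as in the proof of Theorem \ref{thm:sm_lower}.

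The central step, and the one genuinely new relative to the smooth case, is to re-verify the odd-even preserving property \eqref{ls:odd-even} when the dual update is the \emph{constrained} prox mapping in \eqref{lower:dual_prox} rather than a plain gradient. Here the dual prox reduces to a Euclidean projection of $\bar\pi_i+\tfrac1\tau(A_i\bar x-\text{boundary})$ onto the $\ell_1$-ball, i.e. a soft-thresholding, which can only shrink or zero out coordinates and hence never enlarges the support beyond that of $A_i\bar x$ and $\bar\pi_i$. Since $A_i$ is bidiagonal, $A_i\bar x$ for $\bar x\in\Ki$ is supported consistently with the pair structure, and composing with $A_i^\top$ advances the reachable primal subspace by exactly one index, on odd-to-even transitions for worker $1$ and even-to-odd transitions for worker $2$. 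Crucially, the boundary term must sit at the first coordinate (always inside the reachable $\Ki[2]$): a boundary term at a deep coordinate would let the dual oracle activate that row and jump there in a single local step, destroying \eqref{ls:odd-even}. With this verified, Lemma \ref{lm:even-odd} forces $\xt[k]\subset\Ki[k]$ after $k$ communication rounds.

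It then remains to lower-bound $\min_{x\in\Ki[k]}f(x)-f_*$ and tune the free parameters. For part (a), the chain structure yields a bound proportional to $\MA\DPi$ times the unresolved tail $\sqrt{\sum_{i>k}(\xstar_i)^2}$ of the optimum; I would fix the decay profile of $\xstar$ and the dimension $n\approx 2k$ so that this tail is $\Omega(\Rx/k)$ while $\norm{\xt[0]-\xstar}\le\Rx$, and set $k=\Theta(\MA\DPi\Rx/\ep)$, which dictates the stated dimension $n>2\ceil{\DPi\MA\Rx/96\ep}$. For part (b) I would add $u(x)=\tfrac\alpha2\normsq{x}$, which (as in \eqref{smst_l:prob}) does not affect the odd-even verification since the server $x$-prox preserves $\Ki$, and convert distance into value for free through strong convexity: $f(\xt[k])-f_*\ge\tfrac\alpha2\normsq{\xt[k]-\xstar}\ge\tfrac\alpha2\sum_{i>k}(\xstar_i)^2$. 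Choosing the construction so the optimal tail obeys $\sum_{i>k}(\xstar_i)^2=\Omega(\MA^2\DPi^2/(\alpha^2k^2))$ gives $f(\xt[k])-f_*=\Omega(\MA^2\DPi^2/(\alpha k^2))$, and setting this equal to $\ep$ produces $k=\Omega(\MA\DPi/\sqrt{\alpha\ep})$ at dimension $n=\Theta(\MA\DPi/\sqrt{\alpha\ep})$.

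I expect the main obstacle to be the joint design of the operator and the optimum so that the gap genuinely scales as $\MA\DPi\Rx/k$ (resp. $\MA^2\DPi^2/(\alpha k^2)$) while $\norm{\xstar}$ stays controlled by $\Rx$: a naive uniform difference operator produces a ramp whose Euclidean norm grows like $\sqrt{n}$, which forces the ramp height down and degrades the effective rate, so a non-uniform weighting of the difference rows (analogous to the tuned leading blocks of $A_1,A_2$ in \eqref{smst_l:prob}) is needed to concentrate $\xstar$ and enforce the correct tail decay. Checking simultaneously that this weighting keeps $\norm{A_i}[2,2]=\MA$, preserves the odd-even property, and yields the exact constants in the dimension bounds is the delicate part; the remaining telescoping estimates for $\min_{x\in\Ki[k]}f$ are routine once the instance is fixed.
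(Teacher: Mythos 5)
Your skeleton (two workers, $P=\Delta_2^+$, absolute-value couplings, re-verifying the odd-even preserving property for the constrained dual prox, then invoking Lemma \ref{lm:even-odd} and tuning parameters) is exactly the paper's route, and your observations about support preservation under the dual prox and about the boundary term having to live in the first coordinate are correct. The genuine gap is in the choice of $\Pii$. Taking $\Pii$ to be an $\ell_1$-ball gives $f_i(x)=\tfrac{\DPi}{2}\norm{A_i x - c}[\infty]$, i.e.\ a \emph{maximum} of consecutive-coordinate differences, not a sum. Your claim that averaging the two workers at $p=(\tfrac12,\tfrac12)$ ``dominates a single chain of differences'' is false for the max: the telescoping identity $\tsum_{i=1}^{2k}|x_i-x_{i+1}|\ge x_1-x_{2k+1}$ is what charges the full descent height on $\Ki[k]$, whereas with a max the algorithm can place a gently sloping ramp on $\Ki[k]$ whose largest link is only $x_1/k$, evading the barrier. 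Redoing the optimization with this evasion, the value gap on $\Ki[k]$ degrades (you lose roughly a factor of $k^{1/2}$ after re-tuning $\alpha$ against $\Rt[0]$), so the construction as stated yields $k=\Omega((\MA\DPi\Rt[0]/\ep)^{2/3})$ rather than the claimed $\Omega(\MA\DPi\Rt[0]/\ep)$. The paper avoids this by taking $\Pii=\gampi(\{1\}\times[-2,2]^k)$, a box whose Euclidean diameter is $\Theta(\sqrt{k}\gampi)$; the per-coordinate budget $\gampi=\DPi/(5\sqrt{k})$ then multiplies a \emph{sum} of $k$ absolute differences, which is exactly how the product $\MA\DPi$ enters at the right rate. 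A second, related defect: a linear $\fistar$ over a centered ball does not produce the separate term $-\text{const}\cdot x_1$ you need (it only shifts the argument inside the norm); the paper instead freezes the first dual coordinate to $\gampi$ and dedicates the first row of $\Ai$ to the linear term, with $\fistar\equiv 0$.

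There is also a structural mismatch in how you extract the final gap. You import the geometric-tail argument from the smooth strongly convex instance \eqref{smst_l:prob}, asserting $\tsum_{i>k}(\xstar_i)^2=\Omega(\MA^2\DPi^2/(\alpha^2k^2))$ and converting distance to value via strong convexity. A piecewise-linear-plus-quadratic objective does not produce a geometrically decaying optimum; in the paper's instance $\xstar$ is \emph{flat}, $\xstar=\tfrac{\gampi\gamA}{2k\alpha}[2,1,\ldots,1]$, and the bound comes from directly minimizing the lower envelope $\bar f=\tfrac12(f_1+f_2)+u$ over $\Ki[k]$ (after a monotone rearrangement argument) and comparing with $\fstar$, yielding $f(\xt[k])-\fstar\ge\gamA^2\gampi^2/(4k\alpha)$ for both parts, with $\alpha$ a free tuning parameter in part (a). You flag the joint design of the operator weights and the optimum as ``the delicate part'' but do not resolve it, and with the $\ell_1$-ball dual set it cannot be resolved at the claimed rate; so the proposal does not establish the theorem as written.
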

\vgap
\begin{proof}
We consider the following hard problem parameterized by $k\geq 4$, $\alpha$, $\gamA$ and $\gampi$:
\begin{align}\label{lb_ns}
\begin{split}
&f(x) := \tmax_{p \in \Delta_2^+}p_1 f_1(x) + p_2 f_2(x) \text{ with } \ X = \R^{2k+1},\  u(x) = \tfrac{\alpha}{2} \normsq{x}, \\
&f_1(x):=  \gamA \gampi [2 \tsum_{i=1}^{k} |x_{2i-1} - x_{2i}|  - (\tfrac{3}{2} + \tfrac{1}{k}) x_1], \\
&f_2(x):=  \gamA \gampi [2 \tsum_{i=1}^{k} |x_{2i} - x_{2i+1}|  - (\tfrac{1}{2} + \tfrac{1}{k}) x_1]. \\
\end{split}
\end{align}
In particular, the scenario cost functions $\myfi[1]$ and $\myfi[2]$  are specified in the structured maximization form (c.f. \eqref{eq:orig_prob}), $\myfi(x):= \max_{\pii \in \Pii}\inner{\Ai x}{\pii} - \fistar(\pi)$ with
\begin{align}\label{ls:ns_fistar_lower}
\begin{split}
&\Ai[1] := \gamA \scriptsize \begin{bmatrix} - (\tfrac{3}{2} + \tfrac{1}{k}) & \\
1 & -1 \\
&& 1& -1 \\
&&&&\ddots \\
&&&&& 1 &-1 & 0
\end{bmatrix},\ \Pii[1] := \gampi (\{1\} \times [-2, 2]^k) \subset \R^{k+1},\ \fistar[1](\pii[1]) \equiv 0, \\
&\Ai[2] := \gamA \scriptsize \begin{bmatrix} - (\tfrac{1}{2} + \tfrac{1}{k}) & \\
& 1 & -1 \\
&&& 1& -1 \\
&&&&&\ddots \\
&&&&&& 1 &-1
\end{bmatrix}, \Pii[2] := \gampi (\{1\} \times [-2, 2]^k) \subset \R^{k+1},\ \fistar[2](\pii[2]) \equiv 0.
\end{split}
\end{align}
Clearly, $\max_{i \in [2]} \norm{\Ai}[2,2] \leq 2 \gamA$ and $\max_{i \in [m]}\max_{\pii, \piibar\in \Pii} \norm{\pii- \piibar} \leq 5 \sqrt{k} \gampi$. Furthermore, $\pstar:=[1/2, 1/2]$ and $\xstar:=\tfrac{\gampi\gamA}{2k\alpha}[2, 1, 1, \ldots, 1]$ form the optimal solution since they satisfy the first order optimality conditions given by:
{\skipdisplay
$$0 \in \partial (\tfrac{1}{2} f_1 + \tfrac{1}{2} f_2 + u)(\xstar)  \text{ and } (\tfrac{1}{2}, \tfrac{1}{2}) \in \textstyle\argmax_{p \in \Delta^+_2} p_1 f_1(\xstar) + p_2 f_2(\xstar).$$ }
So 
$\fstar=f(\xstar)= - (\tfrac{\gampi^2\gamA^2}{2k^2 \alpha} + \tfrac{\gamA^2\gampi^2}{4 k \alpha}),\ \norm{\xstar - \xt[0]}^2 = \tfrac{\gamA^2\gampi^2}{4k^2 \alpha^2} (2k + 4) \leq \tfrac{\gamA^2\gampi^2}{k \alpha^2}.$ 
\vgap

Now, let us verify \eqref{lb_ns} satisfies the even-odd preserving property. Consider the worker $\myfi[1]$. Let an even $i\geq 2$ be given and assume $\Mit[t-1][1] \subset \Ki$, i.e., $\Mitcp[t][1][,0] \subset \Ki$.  Let $\Si[j]$ denote a (dual) subspace with non-zero entries only in the first $j$ coordinates, $\Si[j]:= \{\pi \in \R^{k+1}: \pi_l = 0\ \forall l > j\}$. Because of the block structure of $\Ai[1]$,
if the $i+1$\ts{th} coordinate of $\pii[1]$ is non-zero for any $i \geq 2$, the $2i-1$\ts{th} and the $2i$\ts{th} coordinates of $\Ai[1]\pii[1]$ must be non-zero. So we have $\Mpiitl[t][0][1] \subset \Si[i/2 + 1]$; otherwise the update rule \eqref{lower:dual_prox} in the DPM environment would lead to $\Mit[t-1][1] \not\subset \Ki$.

Next, we show $\Mitcp[t][1][,l] \subset \Ki$ and $\Mpiitl[t][l][1] \subset \Si[i/2 + 1]$ for all $l \geq 0$ by induction. Clearly the statement holds for $l=0$. Moreover, if  $\Mitcp[t][1][,l-1] \subset \Ki$ and 
$\Mpiitl[t][l-1][1] \subset \Si[i/2 +1]$, $\Ai[1]\bar x$ and $\piibar[1]$ must be in $\Si[i/2+1]$, so the dual proximal in \eqref{lower:dual_prox} can be written as 
{\skipdisplay
$$\piit[t,l][1] = \targmax_{\pii[1] \in \Pii[1]} \tsum_{j=1}^{i/2+1} (\Ai[1]\bar x)_j \pii[1,j] - \tau/2 (\pii[1,j] - \piibar[1,j])^2 - \tau/2 \tsum_{j=i/2 + 2}^{k+1} (\pii[1,j])^2.$$}
This leads us to $\piit[t,l][1] \in \Si[i/2 + 1]$ and $\Aitr[1] \piit[t,l][1] \in \Ki$, i.e., $\Mitcp[t][1][,l] \subset \Ki$ and $\Mpiitl[t][l][1] \subset \Si[i/2 + 1]$. Then the principle of induction implies that the statement holds for all $l \geq 0$, i.e., $\Mitcp[t][1] \subset \Ki$. In addition, when $i \geq 2$ is odd and  $\Mit[t-1][1] \subset \Ki$, we have $\Mit[t-1][1] \subset \Ki[i+1]$. Since $i+1$ is even, the preceding result implies that $\Mitcp[t][1] \subset \Ki[i+1]$. The property \eqref{ls:odd-even} for the worker $\myfi[2]$ for both even and odd $i$'s can also be deduced in a similar way. Therefore we have shown that the even-odd preserving property holds for the hard problem \eqref{lb_ns}.

Next, applying Lemma \ref{lm:even-odd}, the solution $\xt[k]$ returned by any DPO algorithm in  $k \geq 4$ communication rounds must satisfy $\xt[k] \in \Ki[k]$. We provide a lower bound of $f$ on $\Ki[k]$. Let $\bar{f}:= \tfrac{1}{2}(f_1 + f_2) + u(x)$ denote a uniform lower bound for $f$ given by 
{\skipdisplay
$$\bar f(x):= \gampi\gamA [ \tsum_{i=1}^{2k} |x_{i} - x_{i+1}|  - (1 + \tfrac{1}{k}) x_1] + \tfrac{\alpha}{2} \normsq{x}.$$}
 In order to find the minimum of $\bar f$ on $\Ki[k]$, observe that arranging $\{x_i\}_{i=1}^{k}$ in a decreasing order decreases $\bar f$. Moreover, if $x_k <0$, setting all negative coordinates to zero decreases $\bar f$, so we can focus on $x_1 \geq x_2 ... \geq x_{k} \geq x_{k+1} = ...= x_{2k+1}=0$.
{\skipdisplay $$\tmin_{x \in \Ki[k]} \bar f(x)= \tmin_{x \in \Ki[k]} - \tfrac{\gampi\gamA}{k} x_1 + \tfrac{\alpha}{2} x_1^2 + \tfrac{\alpha}{2} \tsum_{i=2}^{2k+1} x_i^2 \geq - \tfrac{\gampi\gamA}{2k^2 \alpha}.$$}
Thus,  $f(\xt[k]) -\fstar \geq \min_{x \in \Ki[k]} \bar f(x)  - \fstar \geq {\gamA^2\gampi^2}/{4 k \alpha}.$

Finally, we choose appropriate problem parameters to establish the lower bounds. If $\ep \geq \DPi \MA \Rt[0]/{400 \ep}$, $\Omega(\MA \DPi \Rt[0]/\ep) =\Omega(1)$, so the lower bound in a) clearly holds. Otherwise, setting $k:= \ceil{{\DPi \MA \Rt[0]}/{96 \ep}}$, $n :=2k +1$, $\gampi:= {\DPi}/{5 \sqrt{k}},\ \gamA:= {\MA}/{2}$, and $\alpha := {\gamA\gampi}/{\Rt[0] \sqrt{k}}$, the parameters of \eqref{lb_ns} satisfy 
$\tmax_{i \in [2]} \norm{\Ai}[2,2] \leq \MA,\ \tmax_{i \in [m]}\tmax_{\pii, \piibar\in \Pii} \norm{\pii- \piibar} \leq \DPi,\  4\leq k\text{, and} \norm{\xt[0] - \xstar} \leq \Rt[0].$
 Since the minimum optimality gap attainable in $k = \Omega(\MA \DPi \Rt[0]/\ep)$ communication rounds is lower bounded by $\ep$, the result in a) follows.

Now consider $u(x)$ being $\alpha$-strongly convex for a fixed $\alpha>0$. If $\ep \geq \DPi^2 \MA^2 /{40000 \alpha}$,  $\Omega(\MA \DPi/\sqrt{\alpha \ep})= \Omega(1)$, so the lower bound in b) clearly holds. Otherwise, 
setting $k:= \ceil{{\DPi \MA}/{48\sqrt{\alpha \ep}}}$, $n:=2k+1$, $\gampi:= {\DPi}/{5 \sqrt{k}},$ and $ \gamA:= {\MA}/{2}$, the parameters of \eqref{lb_ns} satisfy 
{$\tmax_{i \in [2]} \norm{\Ai}[2,2] \leq \MA,\ \tmax_{i \in [m]}\tmax_{\pii, \piibar\in \Pii} \norm{\pii- \piibar} \leq \DPi,\  4 \leq k \text{, and} \norm{\xt[0] - \xstar} \leq \Rt[0].$ }
 Since the minimum optimality gap attainable in $k = \Omega(\MA\DPi/\sqrt{\alpha \ep})$ communication rounds is lower bounded by $\ep$, the result in b) follows. 
\end{proof}

\section{Numerical Experiments}

In this section, we present a few numerical experiments to verify the theoretical convergence properties of the proposed DRAO-S method. 
\textblue{
\subsection{Implementation Details}
The  numerical  experiments  are  implemented in  MATLAB  2021b and  are  tested  on an Alienware Desktop with a 4.20 GHz Intel Core i7 processor and 16 GB of 2400MHz DDR4 memory. The stepsize of the DRAO-S method are chosen according to Theorem \ref{thm:sps-sm}, \ref{thm:sps-sm-str} and \ref{thm:sps-ns}. The implementation details of the proximal mappings are deferred to the Appendix. Parameter tuning is used to achieve better empirical performance. The DRAO-S method is first tested on a few trial stepsizes, each running for only 20 phases. Next, the one achieving the lowest objective value during the trials is selected to run till the desired accuracy, subject to a termination limit of 5000 phases. The trial stepsizes are calculated according to \eqref{stp:sps_sm}, \eqref{stp:sps_sm_str}, \eqref{stp:drao-s-ns} and \eqref{stp:drao-sps-ns-str}.   The trial stepsizes are calculated from conservative estimates of $\DP \geq \norm{\pt[0] - \pstar}$ and $\Rt[0]\geq  \norm{\xt[0] - \xstar}$, and from a few scaled estimates of $\Lf$, $\MtU$, $\MA$ and $\MAPi$. Specifically, for the smooth linear regression problem \eqref{eq:linear_reg}, the parameters $\Lf$ and $\MtU$ used for the calculations in \eqref{stp:sps_sm} and \eqref{stp:sps_sm_str} are given by (refer to Subsection \ref{subsec:np-reg} for the definition of $H_i$)
$$
\begin{tabular}{|l | l | l|}
\hline
Parameter & Choices & Conservative Estimate \\
\hline
$\Lf$ & $\{\widehat{\Lf}, 0.3 \widehat{\Lf}\}$ & $\widehat \Lf:= \tmax_{ i \in [m]}\norm{H_i^\top H_i}$ \\
\hline
$\MtU$ & $\{\widehat{\MtU}, 0.3 \widehat{\MtU}\}$ & $\widehat{\MtU} := \norm{[\grad \myfi[1](\xundert); \ldots; \grad \myfi[m](\xundert)]}$\\
\hline
\end{tabular}.
$$
So there are four sets of trial stepsizes. For the structured non-smooth two-stage stochastic program, the parameters $\MAPi$ and $\MA$ used for the calculations in \eqref{stp:drao-s-ns} and \eqref{stp:drao-sps-ns-str} are given by (refer to Subsection \ref{subsec:np-reg} for the definition of $T_i$ and $e_i$)
$$
\begin{tabular}{|l | l | l|}
\hline
Parameter & Choices & Conservative Estimate \\
\hline
$\MA$ & $\{\widehat{\MA}, 0.3 \widehat{\MA}, 0.1 \widehat{\MA}\}$ & $\widehat \MA:= \tmax_{ i \in [m]}\norm{T_i}$ \\
\hline
$\MAPi$ & $\{\widehat{M_{A\Pi}}, 0.3 \widehat{M_{A\Pi}}, 0.1 \widehat{M_{A\Pi}}\}$ & $\widehat{M_{A\Pi}} := \norm{[T_1^\top e_1; \ldots; T_m^\top e_m]}$\\
\hline
\end{tabular}.$$
So there are nine sets of trial stepsizes.}
\subsection{Risk Averse Linear Regression Problem}
\label{subsec:np-reg}
\begin{figure}[H]
\centering
\begin{subfigure}{0.25\textwidth}
\includegraphics[width=\textwidth]{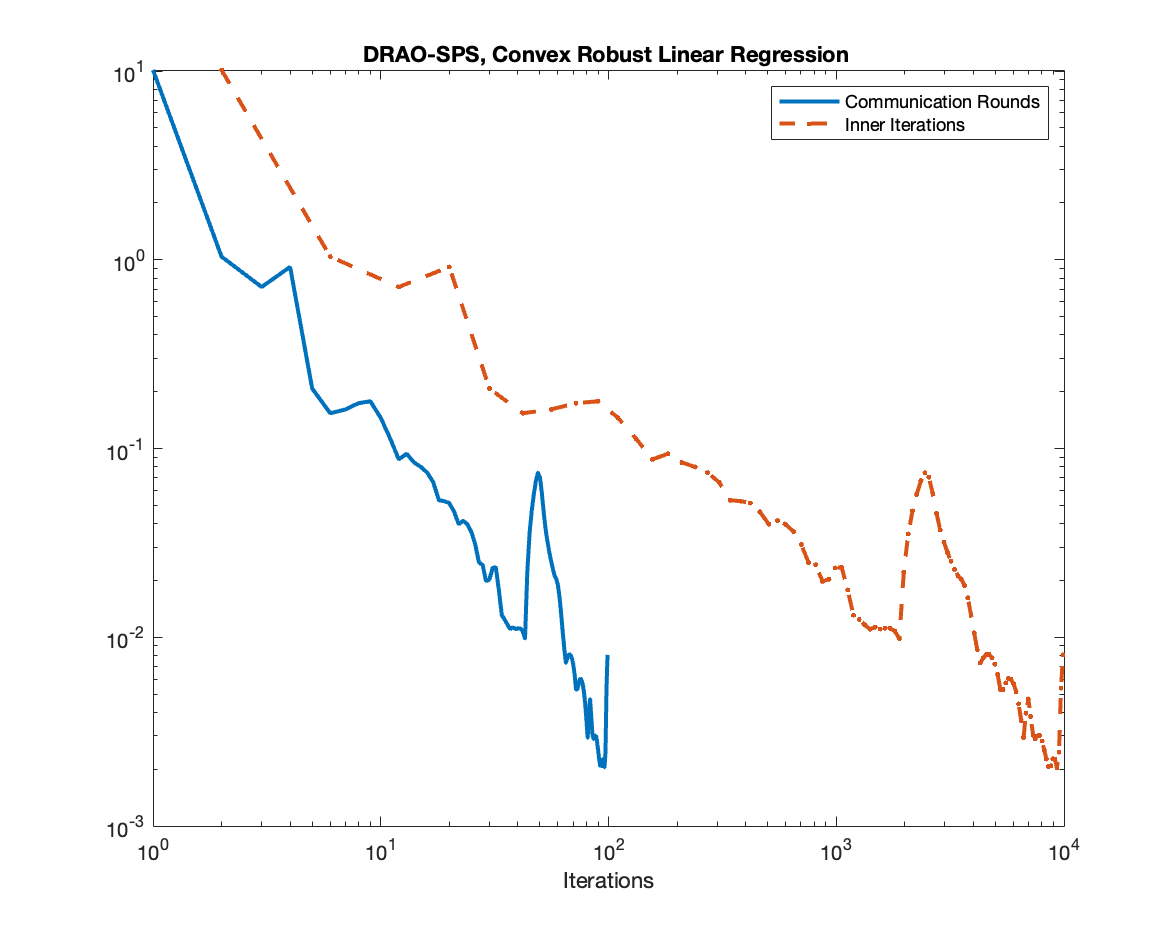}
\caption{figure: $\alpha=0$}\label{fig:sm}
\vspace{-6mm}
\end{subfigure} 
\begin{subfigure}{0.1\textwidth}
\end{subfigure}
\begin{subfigure}{0.45\textwidth}
\includegraphics[width=\textwidth]{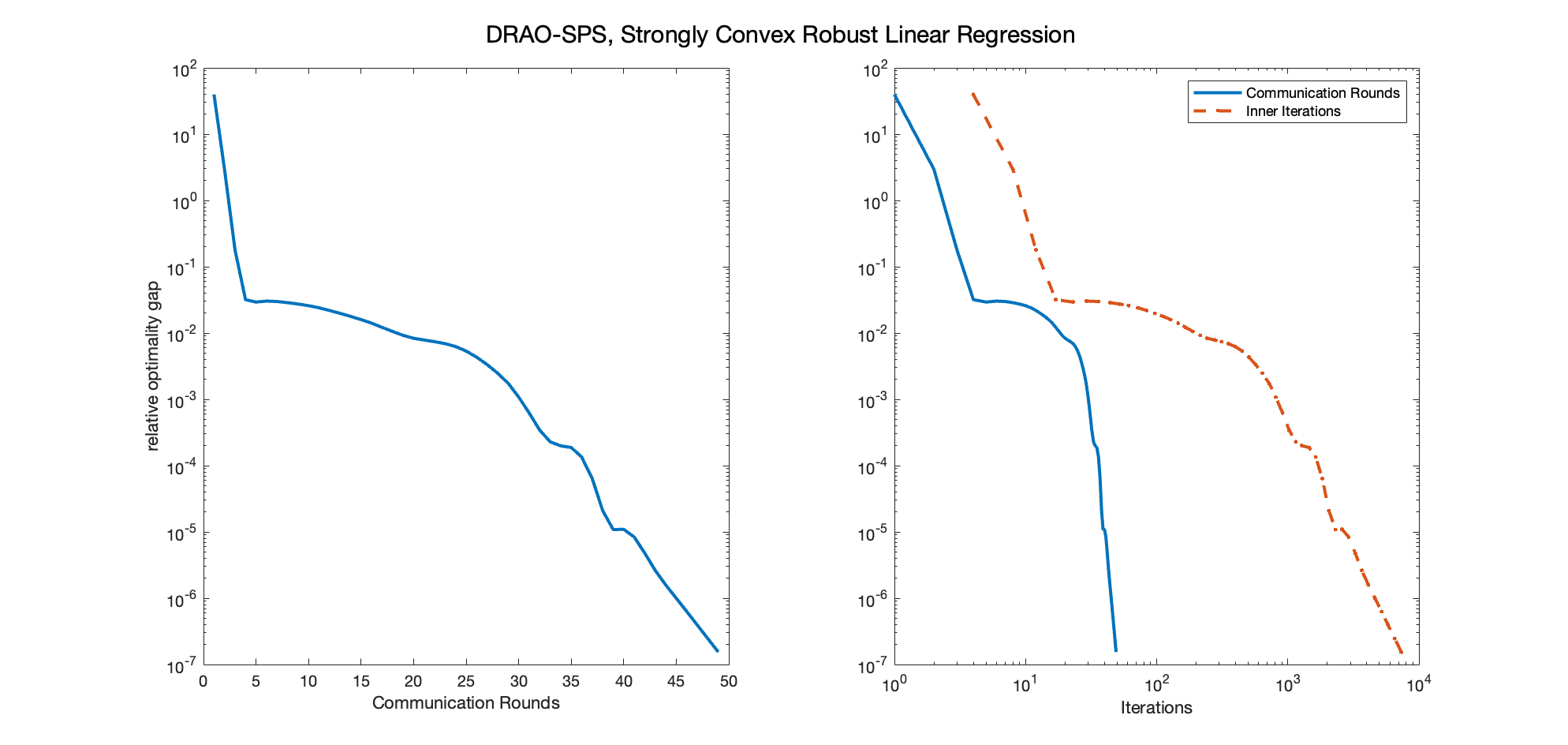}
\caption{figure: $\alpha>0$}\label{fig:sm_str}
\vspace{-6mm}
\end{subfigure}
\caption{\small Convergence of DRAO-S for a Randomly Generated Robust Linear Regression Problem}\label{fig:DRAO-SPS-SM}
\vspace{-3mm}
\end{figure}

\begin{table}
  \centering
  \begin{threeparttable}
    \tiny
\begin{tabular}{|l|l| r r | r r| r r|}
\toprule
\#Scenarios&  Opt. Gap & 
\multicolumn{2}{|c|}{\@ 10\% Risk} & \multicolumn{2}{|c|}{\@ 5\% Risk} & \multicolumn{2}{|c|}{\@ 1.25\% Risk}  \\
\midrule 
\multicolumn{8}{|c|}{Non-strongly Convex $\alpha =0$}\\
\midrule
& & \#Comm & \#$P$-proj& \#Comm & \#$P$-proj & \#Comm & \#$P$-proj\\
\midrule
  & 10\% & 
  6  & 16 &
     8 & 50 & 8  &  50   \\
 20 & 1\% & 
  31   & 529 & 38 & 1236 & 38 & 1236 \\
    & 0.1\% & 
  76  & 2858 & 85 & 6282 & 85 & 6282 \\
\midrule

 & 10\% & 
  3 & 4 & 6& 16 & 7 & 40 \\
 50 & 1\% & 16 & 126 & 20 & 199 & 34 & 1162 \\
  & 0.1\% & 63 & 1935 & 67 & 2206 & 68 & 4245 \\

  \midrule
& 10\% & 3 & 4 & 3 & 4 & 5 & 14 \\
200 & 1\% & 6 & 19 & 12 & 72 & 23 & 260 \\
& 0.1\% & 30 & 516 & 51 & 1336 & 65 & 2113 \\

  \midrule
  \multicolumn{8}{|c|}{Strongly Convex Condition Number $\kappa =10$}\\
  \midrule
  & 1e-3 & 
  32  & 665 &
     39  & 1254 & 39  &  1254   \\
 20 & 1e-4 & 
  43   & 1926 & 44 & 2139 & 44 & 2139 \\
    & 1e-5 & 
  48 & 2980 & 49 & 3603 & 49 & 3603 \\
\midrule

 & 1e-3 & 
  19 & 159 & 29 & 485 & 39 & 1306 \\
 50 & 1e-4 & 36 & 854 & 38 & 1132 & 44 & 2051 \\
  & 1e-5 & 41 & 1538 & 43 & 2061 & 49 & 3754 \\

  \midrule

& 1e-3 & 16 & 54 & 14 & 69 & 29 & 454 \\
200 & 1e-4 & 28 & 205 & 32 & 437 & 42 & 1777 \\
& 1e-5 & 40 & 660 & 44 & 1357 & 46 & 2934 \\

\bottomrule
\end{tabular}
  \end{threeparttable}
    \caption{\small Communications Rounds and $P$-Projections Required by DRAO-S for Linear Regression under a CV@R Risk}
\label{tb:smooth_numerical}
\vspace{-6mm}
\end{table}


For the smooth case, the following risk-averse linear regression problem of the form \eqref{eq:orig_prob} is considered:
 \begin{equation}\label{eq:linear_reg}
 f(x):= \text{CV@R}_{\delta}(\myfi[1](x), \ldots, \myfi[m](x))  + \tfrac{\alpha}{2}\normsq{x} \text{ with }f_i(x) := \tfrac{1}{2} \| H_i x - b_i\|^2, X := \R^n.\end{equation}
Here $\myfi$ denotes the loss function associated with the $i$\ts{th} dataset. Such a problem is motivated by the need for a single robust model under fairness or risk considerations.  For example, the state  education department might wish to build a model to help teachers to identify students who need extra help. $(H_i, b_i)$ could represent the data collected in the $i$\ts{th} county  and the CV@R risk measure could be used to ensure fairness among counties.

In our experiments, we  set $n=40$, and generate matrices $H_i \in \R^{40 \times 200},$ and $\ b_i \in \R^{40}$ randomly. We generate an estimate of $f_*$  by running the bundle level method \cite{lemarechal1995new} to an extremely high degree of accuracy. We record the average number of communication rounds and $P$-projections steps needed, over five randomly generated instances,  to achieve the desired relative optimality gap, i.e., $(f(\xt) - f_*) / f_* \leq \ep$ under different settings.  
In particular, the DRAO-S method is tested on problems with different levels of risk and different numbers of computing nodes to understand how the communication and the $P$-projection complexities vary with $\DP$ and $m$  in practice.
The results are presented in Table \ref{tb:smooth_numerical}. For the number of computing nodes $m$, both the number of $P$-projections and the number of communication rounds scale well with it. In fact, they seem to decrease slightly when $m$ increases. For $\DP$,  recall that a lower risk level corresponds to a larger ambiguity set $P$ and hence a larger radius $\DP$ (c.f. Subsection \ref{subsec:notation}). Both the number of $P$-projections and the number of communication rounds increase with $\DP$, but the number of communication rounds seems to have a weaker dependence on it.
Additionally, typical convergence curves of the DRAO-S method are plotted in Figure \ref{fig:DRAO-SPS-SM} and they seem to verify the theoretical convergence guarantees.
When $\alpha=0$, Table \ref{tb:smooth_numerical} and the convergence curve in Figure \ref{fig:sm} illustrate a communication complexity and a $P$-projection complexity on the order of  $\bigO(1/\sqrt{\ep})$ and $\bigO(1/\ep)$, respectively. When $\alpha>0$, the convergence curves in Figure \ref{fig:sm_str} and Table \ref{tb:smooth_numerical} illustrate a communication complexity and a $P$-projection complexity on the order of  $\bigO(\log(1/\ep))$  and $\bigO(1/\sqrt{\ep})$, respectively.
Thus, the DRAO-S method can find highly accurate solutions within a small number of communication rounds.

\subsection{Risk Averse Two-Stage Stochastic Programming}\label{subsec:num-sp}
\begin{figure}[ht]
\centering
\begin{subfigure}{0.3\textwidth}
\includegraphics[width=\textwidth]{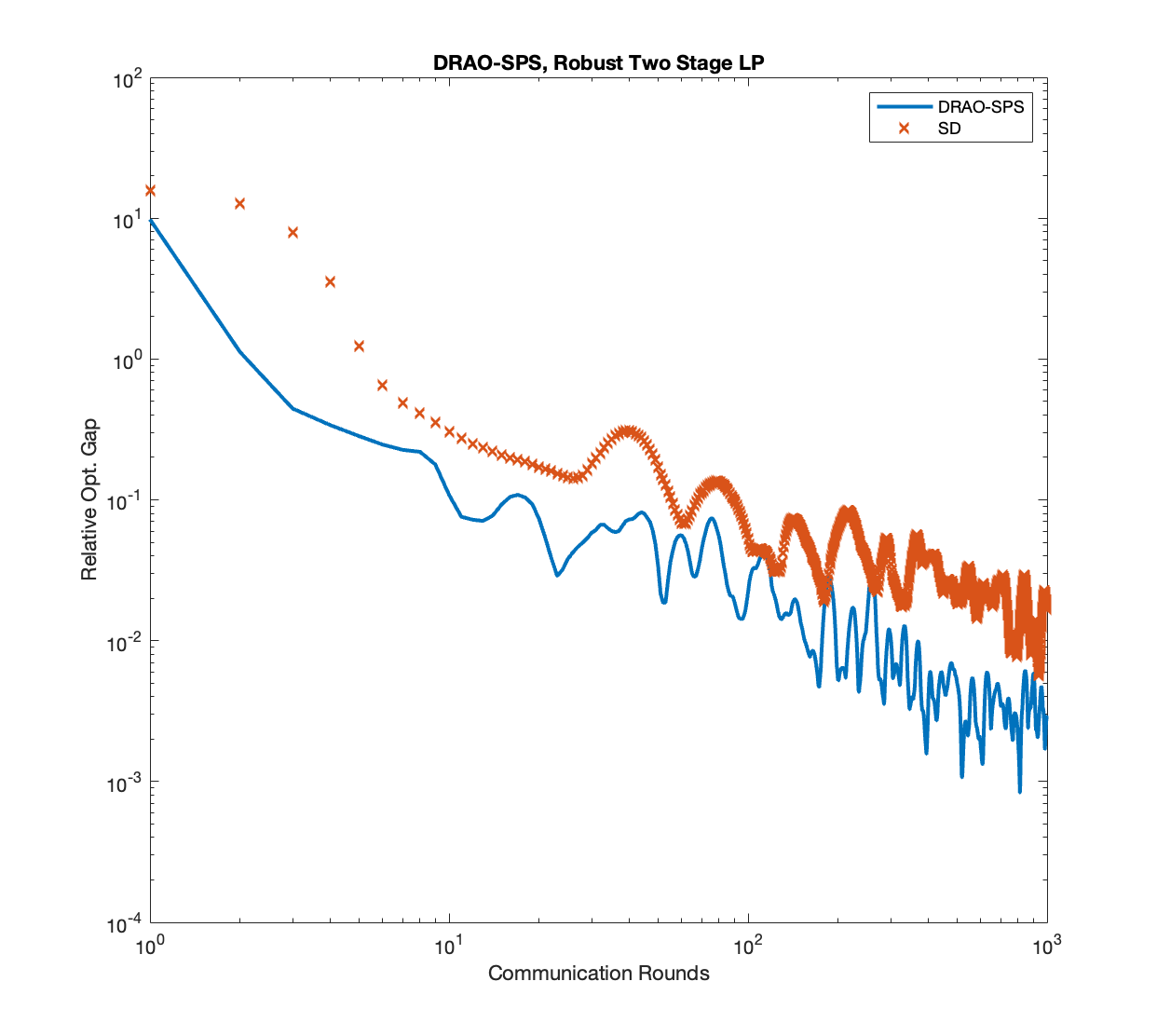}
\caption{figure: $\alpha=0,$ 6 Inner Iterations}\label{fig:ns}
\vspace{-6mm}
\end{subfigure} 
\begin{subfigure}{0.1\textwidth}
\end{subfigure}
\begin{subfigure}{0.36\textwidth}
\includegraphics[width=\textwidth]{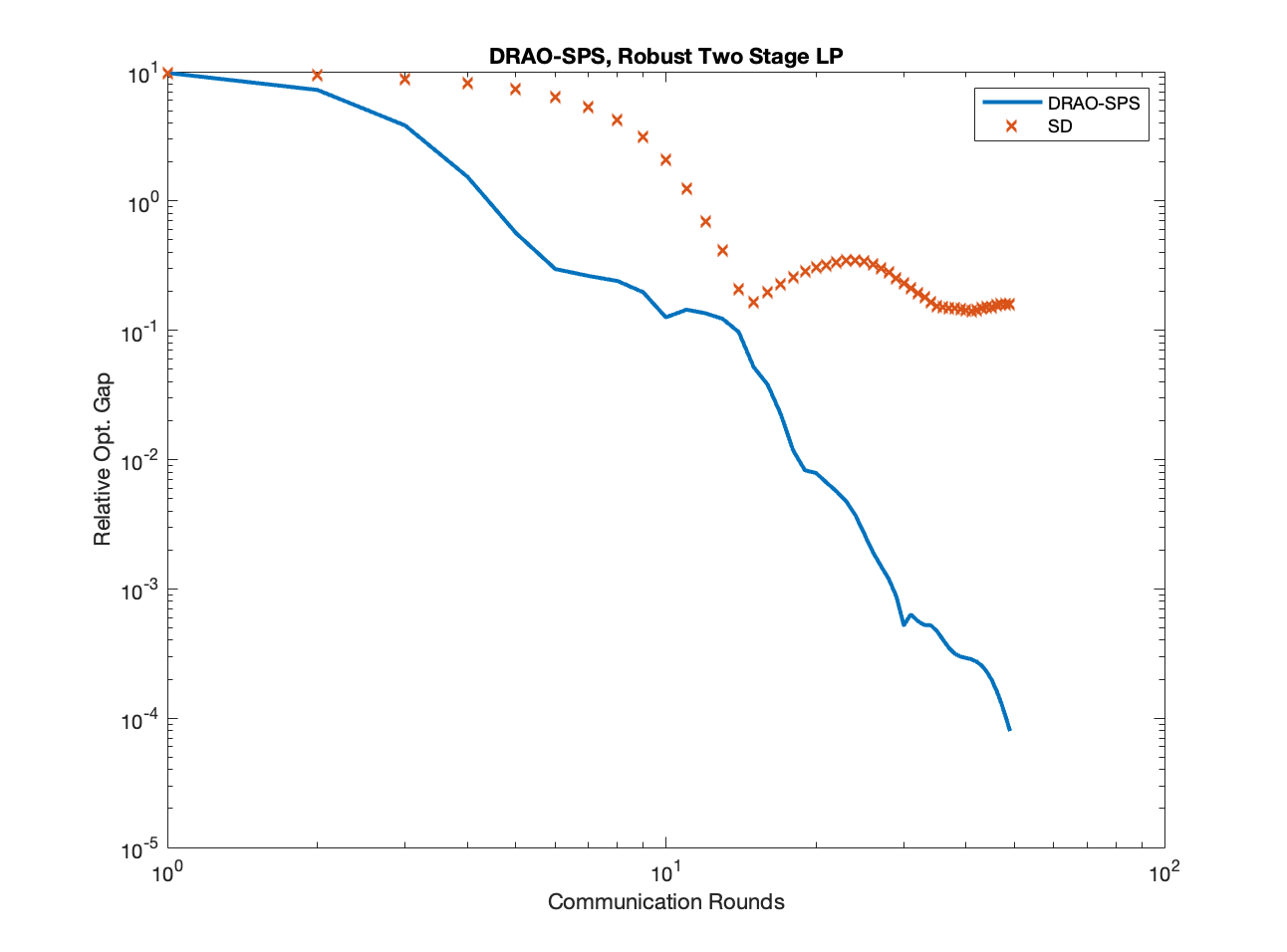}
\caption{figure: $\alpha>0$, 36 Inner Iterations}\label{fig:ns_str}
\vspace{-6mm}
\end{subfigure}
\caption{Convergence of DRAO-S for a Randomly Two-Stage Linear Problem}\label{fig:DRAO-SPS-NS}
\vspace{-3mm}
\end{figure}
\begin{table}[ht!]
\centering
\begin{threeparttable}
\tiny
\begin{tabular}{|l|l| r r | r r| r r|}
\toprule
\#Scenarios&  Opt. Gap & 
\multicolumn{2}{|c|}{\@ 10\% Risk} & \multicolumn{2}{|c|}{\@ 5\% Risk} & \multicolumn{2}{|c|}{\@ 1.25\% Risk}  \\
\midrule 
\multicolumn{8}{|c|}{Non-strongly Convex $\alpha =0$}\\
\midrule
& & SD & DRAO-S & SD & DRAO-S & SD & DRAO-S\\
\midrule
 20 & 10\% & 
  250  & \textbf{59} &
     386 & \textbf{103} & 386 &    \textbf{103}   \\

 & 1\% & 
  2328  & \textbf{420} & 2834 & \textbf{597} & 2834 & \textbf{597} \\
\midrule

 50 & 10\% & 
  357 & \textbf{83} & 520 & \textbf{79} & 733 & \textbf{94} \\

 & 1\% & 2699 & \textbf{511} & 2971 & \textbf{590} & 4292 & \textbf{543} \\
  \midrule

200 & 10\% & 74 & \textbf{12} & 183 & \textbf{16} & 447 & \textbf{34} \\
& 1\% & 1614 & \textbf{187} & 3354 & \textbf{275} & NA & \textbf{293} \\

  \midrule
  \multicolumn{8}{|c|}{Strongly Convex $\alpha =1$}\\
  \midrule
  & 10\% & 
  43 & \textbf{13} & 41 & \textbf{14} & 41 & \textbf{14} \\

 20 & 1\% & 
  181 & \textbf{24} & 129 & \textbf{25} & 129 & \textbf{25} \\

  & 0.1\% & 
  494 & \textbf{41} & 321 & \textbf{47} & 321 & \textbf{47} \\

\midrule
 & 10\% & 
  26 & \textbf{13} & 35 & \textbf{14} & 65 & \textbf{15}   \\
50   & 1\% & 98 & \textbf{21} & 131 & \textbf{23} & 183 & \textbf{24} \\
  & 0.1\% & 226 & \textbf{37} & 258 & \textbf{41} & 335 & \textbf{43} \\
\midrule

 & 10\% & 11& \textbf{10} & 20 & \textbf{12} & 44 & \textbf{14} \\
200 & 1\% & 75 & \textbf{18} & 125 & \textbf{20} & 208 & \textbf{24} \\ 
& 1\% & 320 & \textbf{25} & 508 & \textbf{31} & 438 & \textbf{41} \\ 

\bottomrule
\end{tabular}
\begin{tablenotes}
\item NA : Algorithm has not reached specified accuracy after  5000 communication rounds.
\end{tablenotes}
\end{threeparttable}
\caption{\small Communication Rounds Required by Two-Stage Stochastic Program under a CV@R Risk}\label{tb:ns_numerical}
\vspace{-5mm}
\end{table}

For the structured non-smooth case,  we compare the DRAO-S method with the SD method \cite{zhang2019efficient} using the same risk-averse two-stage stochastic linear programming problem from \cite{zhang2019efficient}: 
\begin{align}\label{eq:n_test}
\begin{split}
 \min_{x \in \R^n}\quad & c^\intercal x + \text{CV@R}_{\delta}( g_1(x), \ldots, g_m(x))  + \tfrac{\alpha}{2}\| x\|^2, \\
  s.t.\ \   & 0 \leq x_j \leq U \ \forall j \in [n],\\
& g_i(x) := \tmin_{y_i \in \R^l_+} \{y_i^\intercal e_i,\  s.t.\ \    R y_i \geq d_i - T_i x\}.\\
\end{split}
\end{align}
The problem models the capacity expansion decision of an electricity company. Being the sole provider of electricity, the company has to meet all demand profiles $\{d_i\}$ using a combination of installed capacity, with an availability factor of $T_i$, and electricity purchased from outside the grid, at a unit cost of $e_i$. Being risk averse, the company intends to find a decision that keeps the total cost low for roughly $(1-\delta)$ of all possible scenarios.  

In our experiments, we set $ n=40$ and $ l=20$,  generate $T_i \in \R^{20 \times 40},\ e_i \in \R^{20}, d_i \in \R^{20}$ and $ c\in \R^{40}$   randomly, and choose $R := I_{20, 20}$ to be the simple complete recourse matrix. We record the average number of communication rounds required to achieve the desired relative optimality gaps for both methods in Table \ref{tb:ns_numerical}. Clearly, DRAO-S enjoys significant savings compared to the SD method. The number of communications rounds required by DRAO-S is also less sensitive to the risk level and  $\DP$.  Moreover, typical convergence curves are plotted in Figure \ref{fig:ns} and \ref{fig:ns_str}. They seem to verify the theoretical communication complexities  of DRAO-S on the orders of  $\bigO(1/\ep)$ and $\bigO(1/\sqrt{\ep})$, respectively, for the non-strongly convex and the strongly convex problems.

\subsection{Risk Measure induced by the $\chi^2$ Ambiguity Set}
Next, we test these algorithms on a more complicated quadratically constrained set $P$. Given a radius parameter $r$, the modified $\chi^2$ probability uncertainty set respect to the empirical probability $[1/m, \ldots, 1/m]$ is given by 
$$P_r = \{p \in \R^m_+: \tsum_{i=1}^{m} p_i = 1, \normsq{p - [1/m, \ldots, 1/m]} \leq r\}.$$
Inspired by the $\chi^2$ test, $P_r$ is useful for distributionally robust optimization (DRO) \cite{chen2021decomposition}. We conduct our experiments with the induced risk-measure $\rho(g) = \max_{p \in P_r} \inner{p}{g}$ on both the linear regression problem \eqref{eq:linear_reg} and the two-stage stochastic program \eqref{eq:n_test}. The average number of communication rounds required to reach the desired sub-optimalities for various levels of $r$ are recorded in Table \ref{tb:X2_smooth_numerical} and \ref{tb:X2_ns_numerical}. Since a larger $r$ implies a larger $P$, the results are consistent with our findings under the CV@R setting. 

\begin{table}
  \centering
 
  \begin{threeparttable}
    \tiny
\begin{tabular}{|l|l| r r | r r| r r|}
\toprule
\#Scenarios&  Opt. Gap & 
\multicolumn{2}{|c|}{\@ $r=0.05$} & \multicolumn{2}{|c|}{\@ $r=0.1$} & \multicolumn{2}{|c|}{\@ $r=0.2$}  \\
\midrule 
\multicolumn{8}{|c|}{Non-strongly Convex $\alpha =0$}\\
\midrule
& & \#Comm & \#$P$-proj& \#Comm & \#$P$-proj & \#Comm & \#$P$-proj\\
\midrule
    & 10\% & 3  & 4 & 3 & 4 & 3  &  4   \\
 20 & 1\% & 14   & 93 & 17 & 134 & 20 & 188 \\
    & 0.1\% & 28  & 274 & 40 & 944 & 73 & 2710 \\
\midrule

    & 10\% & 3 & 4 & 3& 4 & 3 & 4 \\
 50 & 1\% & 8 & 33 & 13 & 83 & 19 & 162 \\
    & 0.1\% & 21 & 216 & 30 & 524 & 69 & 2369 \\

  \midrule
    & 10\% & 3 & 4 & 3 & 4 & 3 & 4 \\
200 & 1\% & 7 & 22 & 14 & 95 & 19 & 164 \\
    & 0.1\% & 21 & 207 & 40 & 872 & 70 & 2426 \\

  \midrule
  \multicolumn{8}{|c|}{Strongly Convex Condition Number $\kappa =10$}\\
  \midrule
    & 1e-3 & 6  & 10 & 9  & 28 & 26  &  235   \\
 20 & 1e-4 & 32   & 216 & 33 & 343 & 35 & 605 \\
    & 1e-5 & 35 & 312 & 36 & 462 & 38 & 754 \\
\midrule

    & 1e-3 & 6 & 10 & 15 & 67 & 26 & 244 \\
 50 & 1e-4 & 29 & 164 & 32 & 323 & 35 & 566 \\
    & 1e-5 & 34 & 266 & 35 & 409 & 37 & 690 \\

  \midrule

    & 1e-3 & 20 & 68 & 18 & 80 & 25 & 220 \\
200 & 1e-4 & 28 & 163 & 34 & 356 & 36 & 676 \\
    & 1e-5 & 34 & 280 & 36 & 450 & 39 & 882 \\

\bottomrule
\end{tabular}
  \end{threeparttable}
    \caption{\small  Communications Rounds and $P$-Projections Required by DRAO-S for Linear Regression under a modified $\chi^2$ Risk Measure}
\label{tb:X2_smooth_numerical}
\vspace{-6mm}
\end{table}

\begin{table}[ht!]
\centering
\begin{threeparttable}
\tiny

\begin{tabular}{|l|l| r r | r r| r r|}
\toprule
\#Scenarios&  Opt. Gap & 
\multicolumn{2}{|c|}{\@ $r=0.05$} & \multicolumn{2}{|c|}{\@ $r=0.1$} & \multicolumn{2}{|c|}{\@ $r=0.2$}  \\
\midrule 
\multicolumn{8}{|c|}{Non-strongly Convex $\alpha =0$}\\
\midrule
& & SD & DRAO-S & SD & DRAO-S & SD & DRAO-S\\
\midrule
 20 & 10\% & 
  88  & \textbf{41} &
     135 & \textbf{49} & 190 &    \textbf{54}   \\

 & 1\% & 
  703  & \textbf{292} & 1006 & \textbf{330} & 2032 & \textbf{343} \\
\midrule

 50 & 10\% & 
  240 & \textbf{46} & 319 & \textbf{57} & 388 & \textbf{75} \\

 & 1\% & 1543 & \textbf{313} & 2146 & \textbf{377} & 2838 & \textbf{409} \\
  \midrule

200 & 10\% & 194 & \textbf{16} & 273 & \textbf{32} & 332 & \textbf{43} \\
& 1\% & 1747 & \textbf{270} & 2818 & \textbf{315} & 3191 & \textbf{335} \\

  \midrule
  \multicolumn{8}{|c|}{Strongly Convex $\alpha =1$}\\
  \midrule
  & 10\% & 
  10 & \textbf{10} & 14 & \textbf{11} & 21 & \textbf{12} \\

 20 & 1\% & 
  41 & \textbf{17} & 68 & \textbf{19} & 83 & \textbf{21} \\

  & 0.1\% & 
  188 & \textbf{28} & 301 & \textbf{37} & 338 & \textbf{39} \\

\midrule
 & 10\% & 
  \textbf{9} & 10 & 15 & \textbf{12} & 46 & \textbf{14}   \\
50   & 1\% & 52 & \textbf{18} & 96 & \textbf{20} & 199 & \textbf{23} \\
  & 0.1\% & 231 & \textbf{29} & 348 & \textbf{36} & 605 & \textbf{42} \\
\midrule

 & 10\% & 14 & \textbf{11} & 21 & \textbf{13} & 33 & \textbf{14} \\
200 & 1\% & 100 & \textbf{18} & 158 & \textbf{21} & 183 & \textbf{24} \\ 
& 1\% & 556 & \textbf{30} & 771 & \textbf{36} & 671 & \textbf{37} \\ 

\bottomrule
\end{tabular}
\end{threeparttable}
\caption{\small  Communication Rounds Required by Two-Stage Stochastic Program under a modified $\chi^2$ Risk Measure}\label{tb:X2_ns_numerical}
\vspace{-5mm}
\end{table}

\section{Conclusion}

This paper introduces the problem of distributed risk-averse optimization. A conceptual DRAO method and a more practical DRAO-S method are proposed. Both of them are able to solve the risk-averse problem with the same communication complexities as those for solving the risk-neutral problem. The optimality of their communication complexities is established with matching lower bounds. And preliminary numerical experiments seem to indicate promising empirical performance for DRAO-S.

In future work, we will attempt to extend our proposed methods to the more general cross-device federated learning setting \cite{kairouz2021advances} where $\myfi$'s are accessible only via a stochastic first-order oracle and the communication network is unreliable. We will also attempt to study the extension to more complicated risk measures for which $p$-prox mappings are prohibitively expensive and only gradient evaluations are possible.  

\bibliographystyle{siam}
\bibliography{Reference}
\section{Appendix}

\begin{lemma}\label{lm:app-pd}
Let $\myfi: \R^n \rightarrow \R$ be a proper convex closed function and $\fistar$ be its Fenchel conjugate. The following computations are equivalent for all $\bar y \in X,  \piibar \in \R^n, \tau > 0:$
\begin{align}
&\piit \leftarrow \targmax_{\pii} \inner{\bar{y}}{\pii} - \fistar(\pii) - \tfrac{\tau}{2} \normsq{\pii -\piibar}, \label{equiv:dual}\\
&\piit \leftarrow \piibar + \tfrac{1}{\tau} (\bar y -\uy), \text{ where } \uy \leftarrow \targmin_{y} \myfi(y) + \tfrac{1}{2\tau} \normsq{\bar y + \tau \piibar - y}.\label{equiv:primal}
\end{align}

\end{lemma}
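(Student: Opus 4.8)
The plan is to characterize the maximizer in \eqref{equiv:dual} and the minimizer in \eqref{equiv:primal} through their respective first-order optimality conditions, and then to identify them using the conjugate subgradient relation $\pii \in \partial \myfi(y) \Leftrightarrow y \in \partial \fistar(\pii)$, which is valid for any proper closed convex $\myfi$. Since the objective in \eqref{equiv:dual} is strongly concave (owing to the $-\tfrac{\tau}{2}\normsq{\pii - \piibar}$ term) and the objective in \eqref{equiv:primal} is strongly convex, each problem admits a unique solution; it therefore suffices to exhibit a single point satisfying one condition and verify it also satisfies the other, and uniqueness upgrades this matching into a genuine equivalence.

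First I would write the optimality condition for the primal prox mapping in \eqref{equiv:primal}. Its minimizer $\uy$ satisfies $0 \in \partial \myfi(\uy) + \tfrac{1}{\tau}(\uy - \bar y - \tau \piibar)$, which rearranges to
\[
\piibar + \tfrac{1}{\tau}(\bar y - \uy) \in \partial \myfi(\uy).
\]
Defining $\piit := \piibar + \tfrac{1}{\tau}(\bar y - \uy)$ exactly as in \eqref{equiv:primal}, this reads $\piit \in \partial \myfi(\uy)$, and by the conjugate subgradient relation it is equivalent to $\uy \in \partial \fistar(\piit)$.

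Next I would write the optimality condition for the dual prox mapping in \eqref{equiv:dual}: its unique maximizer is the point $\pii$ satisfying $\bar y - \tau(\pii - \piibar) \in \partial \fistar(\pii)$. The key algebraic observation is that the substitution $\piit = \piibar + \tfrac{1}{\tau}(\bar y - \uy)$ gives $\tau(\piit - \piibar) = \bar y - \uy$, whence $\bar y - \tau(\piit - \piibar) = \uy$. Consequently the dual optimality condition for $\piit$ collapses to $\uy \in \partial \fistar(\piit)$, which is precisely the relation established in the previous step. Thus $\piit$ is the unique dual maximizer, so the outputs of \eqref{equiv:dual} and \eqref{equiv:primal} coincide.

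I do not anticipate a genuine obstacle, as this is essentially a Moreau-type identity; the proof is a short chain of first-order conditions. The only two points requiring care are the legitimate use of the subgradient inversion $\pii \in \partial \myfi(y) \Leftrightarrow y \in \partial \fistar(\pii)$, which holds because $\myfi$ is proper, closed, and convex (so that $\myfi$ and $\fistar$ are mutually conjugate), and the appeal to strong convexity and strong concavity to guarantee uniqueness of both solutions, which is what turns the one-sided verification into the desired equivalence for all $\bar y \in X$, $\piibar \in \R^n$, and $\tau > 0$.
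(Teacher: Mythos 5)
Your proof is correct, but it takes a genuinely different (and more elementary) route than the paper's. You work directly with the first-order optimality conditions of the two prox problems and glue them together with the subgradient inversion $\pii \in \partial \myfi(y) \Leftrightarrow y \in \partial \fistar(\pii)$, then invoke uniqueness from strong concavity to upgrade the one-sided verification into equivalence; all of these steps are valid under the stated hypotheses ($\myfi$ proper, closed, convex, finite-valued), and the algebra $\bar y - \tau(\piit - \piibar) = \uy$ is exactly right. The paper instead introduces the Moreau envelope $g(u) = \inf_y \myfi(y) + \tfrac{1}{2\tau}\normsq{u-y}$, identifies $\piit$ as $\grad g(\bar y + \tau \piibar)$ via a dominating-function argument, and then applies the infimal-convolution conjugation identity $(\myfi \,\Box\, \tfrac{1}{2\tau}\normsq{\cdot})^* = \fistar + \tfrac{\tau}{2}\normsq{\cdot}$ to convert the gradient evaluation into the dual maximization \eqref{equiv:dual}. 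Your approach buys brevity and avoids citing the infimal-convolution theorem, at the cost of having to argue uniqueness explicitly; the paper's approach makes the Moreau-envelope structure visible and sidesteps the subgradient inversion, but requires the smoothness of $g$ and the conjugation identity as external facts. Both are complete proofs of the same Moreau-type decomposition.
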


\begin{proof}
Let us fix an $i \in [m]$ and let $\bu:=\bar y + \tau\piibar$. Let $\piit$ be generated according to \eqref{equiv:primal}. Consider a Moreau envelop of $\myfi$  given by 
$g(u)= (\myfi \Box \tfrac{1}{2\tau}\|\cdot-y\|^2)(u) :=\inf_{y} \myfi(y) + \tfrac{1}{2\tau} \|u - y\|^2.$
Since $\myfi$ is convex, $g$ is convex and smooth over $\R^n$, thus $\partial g(\bu)$ is non-empty and unique.\\
Next, define $\bg(u):= \myfi(\uy) + \tfrac{1}{2\tau} \|u - \uy\|^2$ such that $g(u) \leq \bg(u)$.
Since $\uy := \inf_{y} \myfi(y) + \tfrac{1}{2\tau} \|\bu - y\|^2$ in \eqref{equiv:primal} implies 
$g(\bu) = \bg(\bu)$, the subgradient of $g$ at $\bu$ must be a subgradient of $\bg$, a dominating function, at $\bu$, i.e.,
$$\partial g(\bu) \subset \partial \bg(\bu) = \{\piit := \tfrac{1}{\tau}(\bu - \uy)\}.$$
Therefore $\piit = \grad g(\bu).$
Using the infimal convolution identity (c.f. Theorem 4.16 in \cite{Beck2017First}) 
$(g)^*(\pii) =(\myfi \Box \tfrac{1}{2\tau}\normsq{\cdot})^*(\pii)=\fistar(\pii) + \tfrac{\tau}{2}\normsq{\pii}\ \forall \pii$, the equivalence between maximization and sub-gradient evaluation, and the fact $\bu:=\bar y + \tau\piibar$, we get
\begin{align*}
\piit \in \partial g(\bar u) = \partial (\myfi \Box \tfrac{1}{2\tau}\normsq{\cdot})(\bu) 
&\Leftrightarrow \piit \in \targmax_{\pii} \inner{\bu}{\pii} - (\myfi \Box \tfrac{1}{2\tau}\normsq{\cdot})^*(\pii)\\
&\Leftrightarrow \piit \in  \targmax_{\pii \in \Pii} \inner{\bar y + \tau\piibar}{\pii} - \fistar(\pii) - \tfrac{\tau}{2} \normsq{\pii}\\
&\Leftrightarrow \piit \in  \targmax_{\pii \in \Pii} \inner{\bar y}{\pii} - \fistar(\pii) - \tfrac{\tau}{2} \normsq{\pii -\piibar}.
\end{align*}
\end{proof}
\vgap
\textblue{
\subsection{Efficient Implementations for Proximal Mappings}
Since $X$ is either a box or $\R^n$, the $x$-prox mappings are implemented with closed-form solutions. The $\pi$-prox mappings also admit closed-form solutions. For the linear regression problem in \eqref{eq:linear_reg}, the equivalent primal gradient computation amounts to a matrix-vector multiplication. For the two-stage stochastic program in \eqref{eq:n_test}, since the simple complete recourse is assumed \cite{zhang2019efficient}, $\Pi_i$ is a box and the projection onto it can implemented by component-wise thresholding.

The $p$-proximal update are implemented with binary searches and some basic matrix operations. When $\rho$ is a $\delta$-CV@R risk measure, $P$ can expressed as the intersection of an equality constraint and a box constraint \cite{shapiro2014lectures}. By dualizing the coupling equality constraint, we arrive at an equivalent two-level optimization formulation for the $\pt$-prox mapping.     
\begin{align}\label{eq:cvar-p-proximla}
\begin{split}
\pt = \targmax_{p}&\ \inner{p}{g} - \tfrac{1}{2} \normsq{p - \ptt} \quad \Leftrightarrow\quad \pt = \tmin_{\lambda \in \R }\targmax_{p}\ \inner{p}{g} - \tfrac{1}{2} \normsq{p - \ptt} + \lambda (\tsum_{i=1}^m p_i - 1)\\
s.t.&\   0 \leq  p_i \leq 1/(m \delta)   \quad \quad \quad \quad \quad \quad \quad \quad \quad \quad \quad \quad  \quad  \quad s.t.\   0 \leq  p_i \leq 1/(m \delta).  \\
&\ \tsum_{i=1}^m p_i = 1
\end{split}
\end{align}
For a fixed $\lambda$, the  inner solution $p(\lambda)$ can be computed via a component-wise vector thresholding and the optimal $\lambda^t$ is characterized by the root condition $\tsum_{i=1}^m p_i(\lam^t) - 1 =0$. Since $p(\lam)$ is a monotonically non-decreasing function of $\lam$, an accurate approximation to $\lam^t$ and hence $p^t$ can be found by a binary search on $\lam$.  Next, when $\rho$ is the risk measure induced by the $\chi^2$ ambiguity set, we can dualize the $\chi^2$ constraint to express the $\pt$-prox mapping equivalently as follows.  
\begin{align*}
\pt = \argmax_{p \geq 0 }&\ \inner{p}{g} - \tfrac{1}{2} \normsq{p - \ptt} \quad \quad \quad \Leftrightarrow\quad \pt = \min_{u \in \R_+ }\argmax_{p \geq 0}\ \inner{p}{g} - \tfrac{1}{2} \normsq{p - \ptt} \\
s.t.&\   \norm{p - [1/m, \ldots, 1/m]}^2 \leq r    \quad \quad \quad \quad \quad \quad \quad \quad \quad \quad \quad \quad     + u (\norm{p - [1/m, \ldots, 1/m]}^2 - r) 
 \\
&\ \tsum_{i=1}^m p_i = 1.  \quad \quad \quad \quad \quad \quad \quad \quad \quad \quad  \quad \quad \quad \quad \quad \quad   s.t.\   \tsum_{i=1}^m p_i = 1.
\end{align*}
For a fixed $u$, the inner solution $p(u)$ above can be solved similarly to \eqref{eq:cvar-p-proximla}. A sufficient optimality condition for $u^t$ is the KKT condition, i.e. either $u^t = 0 $, or $u^t > 0$ and  $\norm{p(u^t) - [1/m, \ldots, 1/m]}^2 - r = 0$. So an accurate $u^t$ and hence $p^t$
can be found by a binary search on $u$.

}

\end{document}